\theoremstyle{plain}
\numberwithin{equation}{section}
\theoremstyle{plain}
\newtheorem{theorem}{Theorem}
\newtheorem{lemma}{Lemma}
\newtheorem{remark}{Remark}
\newtheorem{proposition}{Proposition} \setcounter{proposition}{-1}
\newtheorem{corollary}{Corollary}
\newcommand{\bx}{\mathbf{x}}
\newcommand{\bX}{\mathbf{X}}
\newcommand{\by}{\mathbf{y}}
\newcommand{\bz}{\mathbf{z}}
\newcommand{\bS}{\mathbf{S}}
\newcommand{\bt}{\mathbf{t}}
\newcommand{\br}{\mathbf{r}}
\newcommand{\bg}{\mathbf{g}}
\newcommand{\bV}{\mathbf{V}}
\newcommand{\bM}{\mathbf{M}}
\newcommand{\btheta}{\boldsymbol{\theta}}
\newcommand{\bSigma}{\boldsymbol{\Sigma}}
\newcommand{\EE}{\mathbb{E}}
\newcommand{\Var}{\mathrm{Var}}
\definecolor{brightgreen}{rgb}{0.4, 1.0, 0.0}
\begin{document}

\begin{frontmatter}

\title{Pairwise interaction function estimation of stationary Gibbs point processes using basis expansion}
\runtitle{Orthogonal series estimation for Gibbs models}

\begin{aug}

\author[A]{\fnms{Isma\"ila} \snm{Ba}\thanksref{t1,t2}\ead[label=e1]{ba.ismaila@courrier.uqam.ca}},
\author[B]{\fnms{Jean-Fran\c cois} \snm{Coeurjolly}\thanksref{t1,t3}\ead[label=e2]{jean-francois.coeurjolly@univ-grenoble.fr}}
\and 
\author[C,D]{\fnms{Francisco} \snm{Cuevas-Pacheco}\thanksref{t4}}
\ead[label=e3]{francisco.cuevas@usm.cl}
\ead[label=e4]{francisco.cuevas.pacheco@gmail.com}

\address[A]{Department of Mathematics,
Universit\'e du Qu\'ebec \`a Montr\'eal (UQAM), \\ Canada,
\printead{e1}}

\address[B]{Laboratory Jean Kuntzmann, Department DATA,
Universit\'e Grenoble Alpes, \\ France,
\printead{e2}}

\address[C]{Departamento de Matem{\'a}tica,
Universidad T{\'e}cnica Federico Santa Mar{\'i}a, Valparaiso, \\ Chile,
\printead{e3}}

\address[D]{Advanced Center for Electrical and Electronic Engineering,
Universidad T{\'e}cnica Federico Santa Mar{\'i}a, \\ Valparaiso, Chile
}

\thankstext{t1}{Natural Sciences and Engineering Research Council of Canada.}
\thankstext{t2}{Institut des Sciences Mathématiques (ISM).}
\thankstext{t3}{French National Research Agency in the framework of the ``Investissements d’avenir” program (ANR-15-IDEX-02).}
\thankstext{t4}{National Agency for Research and Development of Chile, through grant ANID/FONDECYT/POSTDOCTORADO/No. $3210453$ and by the AC3E, UTFSM, under grant FB-$0008$.}
\runauthor{I. BA, J-F. COEURJOLLY AND F. CUEVAS-PACHECO}

\end{aug}

\begin{abstract}
The class of Gibbs point processes (GPP) is a large class of spatial point processes able to model both clustered and repulsive point patterns. They are specified by their conditional intensity, which for a point pattern $\mathbf{x}$ and a location $u$, is roughly speaking the probability that an event occurs in an infinitesimal ball around $u$ given the rest of the configuration is $\mathbf{x}$. The most simple and natural class of models is the class of pairwise interaction point processes where the conditional intensity depends on the number of points and pairwise distances between them. 
This paper is concerned with the problem of estimating  the pairwise interaction function non parametrically. We propose to estimate it using an orthogonal series expansion of its logarithm. Such an approach has numerous advantages compared to existing ones. The estimation procedure is simple, fast and completely data-driven. We provide asymptotic properties such as consistency and asymptotic normality and show the efficiency of the procedure through simulation experiments and illustrate it with several datasets.
\end{abstract}

\begin{keyword}[class=MSC]
\kwd[Primary]{\,62H11, 60G55}
\kwd{}
\kwd[secondary]{\,62J07, 65C60, 97K80}
\end{keyword}

\begin{keyword}
\kwd{spatial statistics}
\kwd{Gibbs point process}
\kwd{orthogonal series estimator}
\kwd{pairwise interaction function}
\end{keyword}



\end{frontmatter}

\section{Introduction}
Spatial point processes are widely used to model data in the form of events or objects within a region (commonly called spatial domain). There are applications of spatial point process models in a broad range of disciplines for instance in forestry and plant ecology, epidemiology, seismology, astronomy, and others \cite{baddeley2006case,illian2008statistical}. Due to their ability to model a wide variety of spatial point patterns, the class of Gibbs point processes is an important class of models for spatial point pattern analysis~\cite[see e.g.][]{gates1986clustering,moller2003statistical,illian2008statistical,baddeley2015spatial,dereudre2019introduction}. Gibbs point processes are however well-known for their lack of tractability \cite[see e.g.][]{dereudre2019introduction}: even for simple  models such as the Strauss process \cite{strauss1975model}, intensity functions  as well as standard summary statistics used in point pattern analysis like the Ripley's $K$-function, the nearest-neighbour $G$ function or the empty space function $F$, are not explicit. To rephrase this comment, this means that even for simple parametric  models, it is in general impossible to set the parameters of a Gibbs model such that the average number of points in a given window has a prescribed value.

Therefore, Gibbs models are  not defined through intensity functions. Instead, in a bounded domain of $\mathbb R^d$, they are characterized by a density with respect to the (unit rate) Poisson point process, the reference process modelling random points without any interaction. The flexibility of the density makes the modelling simple and natural. One has a mechanistic interpretation of the model and produced patterns can be highly regular, clustered or exhibit a complex mixture of these two characteristics.
This paper is focused on the class of pairwise interaction point processes, for which the density depends (in the homogeneous case)  on the number of points and on pairs of points. 
From a statistical point of view, it is not  convenient to work with the density of a Gibbs point process, as the latter is, in general, defined up to a normalizing constant. The quantity of interest, which overcomes this problem, is the Papangelou conditional intensity function \cite[see again][]{dereudre2019introduction}. This function, which can be seen as a ratio of two densities, is interpreted as the probability to observe a point in the vicinity of a location, say $u$, given the rest of the configuration of points is $\mathbf x$.
In this paper, we focus on stationary and isotropic pairwise interaction Gibbs point processes which are defined (see Section~\ref{sec:gpps} for a more formal and precise definition) via their Papangelou conditional	 intensity given for $u\in \mathbb R^d$ and $\bx$  a configuration of points by
\begin{align} \label{pipp0}
\lambda(u,\bx) = \beta \prod_{v \in \bx} \phi (\Vert v - u \Vert)
\end{align}
and we investigate the problem of estimating simultaneously the non-negative parameter $\beta$ and the function $\phi$.

Methods of nonparametric curve estimation allow one to analyze and present data at hand without any prior about the data~\cite{wasserman2006all,efromovich2008nonparametric} i.e. these methods let the data speak for themselves. Kernel estimation and orthogonal series estimation \cite[see e.g.]{whittle1958smoothing,parzen1962estimation,watson1969density} are among some of them. In the context of spatial point processes, a considerable amount of research has addressed nonparametric estimation for  first and second order summary functions, i.e. the intensity function, the pair correlation function (which is a normalized version of the second-order intensity function), and summary functions like Ripley's K function or functions F, G, J, L~\cite[][]{stoyan1994fractals,moller2003statistical,illian2008statistical,baddeley2015spatial}. The current literature covers theoretical as well as practical aspects and all these nonparametric estimates are implemented in particular in \texttt{R} packages such as \texttt{spatstat,GET,stpp} \cite{baddeley2015spatial,myllymaki2019get,gabriel2013stpp}.

Nonparametric estimation of the pairwise interaction function of a Gibbs point process has also been investigated but has received  much less attention.
\begin{itemize}
\item \cite{takacs1986estimator,fiksel1988estimation} are frequent references when the pairwise interaction function estimation problem is evoked. These short notes suggest to model the pairwise interaction function by a step function and to estimate the steps using moment type methods. Both references contain neither discussion, nor simulation, nor data-driven procedure to calibrate the number of steps and  jump points. This approach is definitely not a purely nonparametric one. Actually, the ideas developed in these references have been popularized in the literature of parametric Gibbs models under the terminology Takacs-Fiksel method, see e.g.~\cite{coeurjolly2012takacs} and the references therein. Modelling a pairwise interaction function or higher-order interaction by a step function is a recurrent idea in parametric Gibbs modelling and estimation \cite[see e.g.][]{berthelsen2003likelihood,billiot2008maximum,raeisi2021spatio,iftimi2018multi}. \cite{billiot2001estimation} propose also a similar approach to model the pairwise interaction function of spherical point processes by using a fixed truncated Fourier series expansion. As outlined hereafter, our work should be seen as an appropriate treatment of these ideas in a nonparametric context, since we propose an orthogonal series expansion, which, by choosing for instance the Haar basis, see \cite{haar1909theorie} (see also Section~\ref{subsec:imp}), will lead to a piecewise constant estimate (with jump points, steps and number of breakpoints simultaneously estimated).
\item The work by~\cite{diggle1987nonparametric} is probably the only one in a purely nonparametric frequentist framework. The authors suggest to use an approximation from statistical physics due to  \cite{percus1964pair}, \cite[Chapter~5]{hansen1976theory} to derive an approximate integral equation relating the pair correlation function to the pairwise interaction function of a stationary Gibbs point process. It is worth pointing out that the quality of this approximation is not quantified nor empirically measured. The methodology results in plugging a standard nonparametric estimate of the pair correlation function into the integral equation and to invert this equation using several Fourier approximations. The approach proposed by~\cite{diggle1987nonparametric} suffers from several drawbacks. As noticed by the authors and also by \cite[p.519]{baddeley2015spatial}, the implementation is not straightforward yielding a sensitive and computationally expensive procedure. Due to the not well-understood integral equation approximation, it is impossible to simply transfer properties for the pair correlation function estimator \cite[see e.g.][]{david2010central} to that of the pairwise interaction function one. Thus, consistency, behaviour of the  integrated square error or asymptotic normality are not available.
\item Bayesian approaches have been considered by~\cite{heikkinen1999bayesian}, \cite{berthelsen2003likelihood}. \cite{bognar2004bayesian,berthelsen2008non} also investigated the situation where the parameter $\beta$ in~\eqref{pipp0} is replaced by a function. \cite{heikkinen1999bayesian} approximates model~\eqref{pipp0} by a sum of step functions while~\cite{berthelsen2003likelihood} assume that the pairwise interaction function is approximated by local linear functions. The difference between these two works lies essentially in the following facts. First, \cite{heikkinen1999bayesian} propose a procedure conditionally on the observed number of points and fix the number of steps in their procedure. Second, \cite{berthelsen2003likelihood} assume that the pairwise interaction function $\phi$ is increasing and bounded by~1. This assumption corresponds to the situation of a purely repulsive point pattern, see Corollary~\ref{cor:existence} for more detail. Third, both works propose a different prior distribution for the pairwise interaction function and require thus a different calibration of some tuning parameters. Both approaches require a lot of computations and simulations. On the contrary, the frequentist procedure requires by nature less assumptions on the function; in particular we do not assume $\phi$ to be an increasing step function bounded by $1$. Our methodology is able to fit  clustered, regular or more complex structured  pairwise interaction Gibbs models. In Section~\ref{sec:app} in the supplementary material, we propose to compare our results with these two Bayesian procedures on a standard dataset (available in the \texttt{R} package \texttt{spatstat}) which exhibits repulsion.
\end{itemize}

As a summary, there is no methodology which, simultaneously, is theoretically studied, easily implementable and computationally feasible in a very reasonable time. The aim of this paper is to fill this gap and to address nonparametric estimation of the pairwise interaction function of a stationary GPP via series expansion, an approach which has never been considered for pairwise interaction Gibbs point processes. In a recent study, \cite{jalilian2019orthogonal,coeurjolly2019second} have adapted orthogonal series density estimators~\cite[see e.g.][]{hall1987cross,efromovich2008nonparametric} to the estimation of the pair correlation function or its logarithm of a spatial point process. 
In the same vein, we  develop an orthogonal series estimation of the log pairwise interaction function, i.e. $\log \phi$, for stationary Gibbs point processes. Our approach, detailed in Section~\ref{sec:meth}, is to truncate the series expansion of $\log \phi$ and to estimate coefficients of the truncated series  by maximizing a truncated version of the log pseudolikelihood function~(\cite{besag1974spatial,jensen1991pseudolikelihood}). The pseudolikelihood method is an efficient and very popular  parametric method, alternative to the maximum likelihood method since MLE is first complex to implement (due to the fact that the density of the Gibbs point process is defined up to a normalizing constant depending on the parameters) and second very complex to study theoretically. Consistency for the MLE is only proved and under for some particular models~\cite{dereudre2017consistency}. The pseudolikelihood is roughly speaking a composite likelihood which involves only the conditional intensity function. Since its introduction in the context of spatial point processes by \cite{jensen1991pseudolikelihood}, the pseudolikelihood has gained in popularity. It has been theoretically studied in different contexts for finite range or infinite range models, i.e. when the function $\phi$ equals one or not after a certain distance~\cite[see][]{jensen1994asymptotic,billiot2008maximum,coeurjolly2017parametric}, or for high-dimensional inhomogeneous models \cite{bainference}. Numerically the pseudolikelihood can be very well approximated by a quasi-Poisson regression. Hence the method can be efficiently implemented using standard software and in particular within the \texttt{R} package \texttt{spatstat} \cite{baddeley2000practical} dedicated to spatial point pattern analysis.

Our theoretical contribution is to provide conditions on the homogeneous pairwise interaction Gibbs model (local stability, finite range property, etc) and on the orthonormal basis system to obtain the consistency and  asymptotic normality for the truncated pseudolikelihood estimator and the orthogonal series estimator. These results are established in an increasing domain setting i.e. a setting where the number of points in the observation domain grows with the volume of the observation domain. This asymptotic framework is common in spatial statistics~\cite[see e.g.][]{book:981816}. In particular, Theorem~\ref{THM:CONST} shows the expected compromise between the number of polynomials $K$ used in the series expansion and the amount of acquired data expressed by the volume of the observation domain which has to be considered to get a consistent estimator. Our work is inspired by~\cite{jalilian2019orthogonal} but is fundamentally different. First, the referred paper focuses on the pair correlation function. As already mentioned, the pair correlation function of a Gibbs point process is not explicit and cannot be directly related to its pairwise interaction function. Second,~\cite{jalilian2019orthogonal} have taken advantage of the fact that the kernel estimation of the pair correlation is explicit. This is the main reason why coefficients of the series expansion of the pair correlation can be estimated separately, without any optimization procedure making estimators easier to study from a theoretical point of view.

The choice to expand $\log \phi$ instead of $\phi$ combined with the truncated pseudolikelihood method used to estimate the coefficients brings back the problem to the maximization of a concave function of the parameters \cite[see][]{jensen1991pseudolikelihood}. Therefore the method is very fast and  numerically stable. In addition,
\cite{coeurjolly2013fast} have proposed  fast and consistent estimates of the sensitivity matrix and variance covariance matrix of the score of the pseudolikelihood. Such estimates are  implemented in the \texttt{spatstat R} package. Another advantage of our orthogonal series expansion approach is that we can take advantage of this work and propose asymptotic pointwise confidence intervals for $g(r)=\log \phi(r)$ (and $\phi(r)$) for any $r$. By passing, working with an orthonormal basis system is appealing  as we are able to prove that normalized versions of the sensitivity matrix and variance of the score of the pseudolikelihood are positive definite for $n$ sufficiently large (see Lemmas~\ref{lemma1}-\ref{lemma2} for more details). Such a result is essential to ensure the uniqueness maximum of the truncated pseudolikelihood and to evaluate estimates of the asymptotic variance of $\hat g(r)$ and $\hat \phi (r)$.

The orthogonal series expansion estimator we propose naturally depends on a smoothing parameter, here the number $K$ of orthonormal functions in the series expansion. Tuning this parameter or the bandwidth of a kernel is a standard and recurrent problem in functional estimation~\cite[see e.g.][for the density estimation problem]{hall1987cross,efromovich2008nonparametric}. We propose to select the smoothing parameter $K$ using a composite information criterion, namely the composite AIC \cite{gao2010composite,choiruddin2021information}. This information criterion has the advantage to be quickly evaluated making the whole methodology fully data-driven.

Several perspectives could be undertaken based on the present work. First, we assume that the process has a finite range. To say it quickly, see~\eqref{fr}, we assume that $\phi(r)=1$ for $r\ge \tilde R$ form some $0<\tilde R <\infty$. Extending the methodology to infinite-range models such as the Lennard-Jones model \cite[e.g.][]{coeurjolly2010asymptotic} could be of interest. One limitation of~\eqref{pipp0} is also that we focus on homogeneous models. Letting for instance the parameter $\beta$ to evolve with space and estimate non parametrically both functions $\beta$ and $\phi$ is definitely an interesting but challenging problem from a theoretical point of view.

The rest of the paper is organized as follows. In Section~\ref{sec:gpps}, we give a brief review on pairwise interaction point processes, recall the Georgii-Nguyen-Zessin formula and define some important concepts (translation invariance, finite range, local stability) which ensure the existence of a stationary spatial GPP. We also provide a few examples of pairwise interaction functions in this section.
The orthogonal series expansion  procedure we propose is presented in Section \ref{sec:meth}. 
Section~\ref{sec:asymp} deals with asymptotic properties and presents our main theoretical results. Section~\ref{sec:sim} contains numerical considerations. We discuss the implementation, the data-driven selection of $K$ and propose a large simulation study. We consider several pairwise interaction functions,  orthonormal basis functions and increasing observation domains. We also investigate the misspecification or estimation of irregular parameters such as the hard core and finite range parameters. An application  to real datasets as well as proofs of our main results are provided in the Supplementary Material.

\section{Pairwise interaction Gibbs point processes}
\label{sec:gpps}

The framework of this paper is concerned with stationary and isotropic Gibbs point processes $\bX$ defined on the infinite volume $\mathbb{R}^d$. We view $\bX$ as a locally finite subset of $\mathbb{R}^d$, that is the set $\bX_B:=\bX \cap B$ is finite almost surely for any bounded Borel set $B$ of $\mathbb{R}^d$. This means also that the number of points $N(B)=|\bX_B|$ is finite almost surely. We denote by ${N}_{lf}$, the space of locally finite configurations in $\mathbb{R}^d$, that is
\[
{N}_{lf} = \{ \bx, |\bx_B| < \infty \; \mbox{for any bounded domain} \; B \subset \mathbb{R}^d \}.
\]
The reader interested by a deep presentation of GPPs is referred to \cite{dereudre2019introduction}, \cite{daley2007introduction}, \cite{moller2003statistical} and \cite{georgii1979canonical}. We consider particularly the class of stationary and isotropic pairwise interaction point processes (PIPP) where the Papangelou conditional intensity is given by 
\begin{align} \label{pipp}
\lambda(u,\bx) = \beta \prod_{v \in \bx} \phi (\Vert v - u \Vert)
\end{align}
where $\beta > 0$ represents the activity parameter and $\phi$ is a real-valued interaction function. We are often interested in the log-linear form of the Papangelou conditional intensity which writes 
\begin{align}
 \label{condint}
\log \lambda(u,\bx) = \theta_0 + \sum_{v \in \bx} g (\Vert v - u \Vert)
\end{align}
where $\theta_0:=\log(\beta)$ and $g:=\log(\phi)$.    
For a configuration of points $\bx$ and a location $u \in \mathbb{R}^d$, the quantity $\lambda(u,\bx)$ can be viewed as the conditional probability to have a point in an infinitesimal ball around $u$ given the configuration elsewhere is $\bx$. 

The GNZ (for Georgii, Nguyen and Zessin) formula~\cite[see][]{xanh1979integral,georgii1979canonical,georgii2011gibbs} is a way of characterizing the GPP and also highlights the interest of the Papangelou conditional intensity. It states that, for any measurable function $h : \mathbb R^d \times {N}_{lf} \to \mathbb{R}^{+}$ such that the following expectations, which are denoted by $\EE$ and defined with respect to the distribution of $\bX$,  are finite
\begin{equation}
\label{gnz}
\EE \sum_{u \in \bX} h(u, \bX \setminus \{u\})  =  
 \EE \int    h(u ,\bX) \lambda(u, \bX)  \mathrm{d}u.
\end{equation}
Throughout the paper, we will often use the following three properties for a function $f : \mathbb{R}^d \times {N}_{lf} \to \mathbb{R}^{+}$,
\begin{enumerate}[(i)]
\item $f$ is translation invariant, if
\begin{equation}
\label{ti}
f(u,\bx)=f(\mathrm{0},\uptau_{u}\bx) \tag{TI}
\end{equation}
where $\uptau_{u}\bx = \{v-u: v \in \bx \}$ is the translation of the locations of $\bx$ by the vector $-u$.
\item $f$ has finite interaction range $\tilde R > 0$, if
\begin{equation}
\label{fr}
f(u,\bx) = f(u,\bx \cap B(u,\tilde R)) \tag{FR}
\end{equation}
where $B(u,\tilde R)$ is the euclidean ball centered at $u$ with radius $\tilde R$.
\item $f$ is locally stable, if there exists $\bar{f} < \infty$ such that 
\begin{equation}
\label{ls}
f(u,\bx) \leq \bar{f}. \tag{LS}
\end{equation}
\end{enumerate}
When the Papangelou conditional intensity plays the role of the function $f$, the finite range  property~\eqref{fr} means that the probability to insert  a point $u$ in $\bx$ depends only on the $\tilde R$-neighbors of $u$ to $\bx$. The local stability \eqref{ls} means that the GPP is stochastically dominated by a Poisson point process. And the translation invariance  property~\eqref{ti} leads to the stationarity of the point process. The following proposition proved by \cite{bertin1999existence} is central to this paper as it provides sufficient conditions to ensure the existence of at least one stationary GPP on $\mathbb{R}^d$.  

\begin{proposition} \label{prop0}
Let $\lambda :  \mathbb{R}^d \times {N}_{lf} \to \mathbb{R}^{+}$ be a function of the form (\ref{pipp}) satisfying \eqref{ti}, \eqref{fr} and \eqref{ls}. Then there exists at least one infinite volume stationary Gibbs measure, that is there exists at least one stationary Gibbs model $\bX$ with Papangelou conditional intensity $\lambda$ which in particular satisfies the GNZ formula (\ref{gnz}). 
\end{proposition}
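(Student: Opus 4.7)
The plan is to follow the classical thermodynamic-limit strategy for constructing stationary Gibbs measures: build a sequence of finite-volume Gibbs processes on an expanding sequence of bounded domains, stationarize them by averaging over translations, extract a weakly convergent subsequence using the stochastic domination provided by local stability, and finally pass to the limit in the GNZ equation \eqref{gnz} using the finite-range property.

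First I would fix an increasing sequence of bounded Borel windows $\Lambda_n \uparrow \mathbb{R}^d$ (for instance cubes or tori so that periodic boundary conditions preserve translation invariance) and define the finite-volume Gibbs measure $P_{\Lambda_n}$ with density
\begin{equation*}
f_{\Lambda_n}(\bx) = Z_{\Lambda_n}^{-1}\, \beta^{|\bx|} \prod_{\{u,v\}\subset \bx} \phi(\|v-u\|)
\end{equation*}
with respect to the unit-rate Poisson point process on $\Lambda_n$. The finite-range property \eqref{fr} together with local stability \eqref{ls} makes $Z_{\Lambda_n}$ finite: indeed \eqref{ls} applied iteratively (or equivalently the Ruelle-type bound coming from the locally stable pair potential $-\log\phi$) shows that $f_{\Lambda_n}(\bx) \le \bar\lambda^{|\bx|}$ up to the normalizing constant, which both ensures integrability against the Poisson reference measure and yields the key consequence that $P_{\Lambda_n}$ is stochastically dominated by a Poisson process of intensity $\bar\lambda$ on $\Lambda_n$.

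Second, I would stationarize. Either using periodic boundary conditions on the torus $\Lambda_n$ (so $P_{\Lambda_n}$ is already translation invariant when pushed forward through the natural identification), or more flexibly by defining the empirical stationarization
\begin{equation*}
\bar P_{\Lambda_n}(\cdot) = \frac{1}{|\Lambda_n|} \int_{\Lambda_n} P_{\Lambda_n}(\uptau_{-u}\,\cdot)\,\mathrm{d}u.
\end{equation*}
The uniform stochastic domination by the Poisson process of intensity $\bar\lambda$ transfers to $\bar P_{\Lambda_n}$ and yields tightness in the vague topology on $N_{lf}$ (the number of points in any bounded set has uniformly bounded expectation and Laplace functionals are controlled), so a subsequence converges weakly to some probability measure $P$ on $N_{lf}$. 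The averaging construction makes $P$ translation invariant, which gives the stationarity assertion.

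Third, and this is where the main technical work lies, I would verify that the limit $P$ satisfies the GNZ formula \eqref{gnz} for the prescribed $\lambda$. For each finite $n$, the measure $P_{\Lambda_n}$ satisfies a GNZ identity on $\Lambda_n$ by construction (this is equivalent to the density $f_{\Lambda_n}$ above, via a direct computation comparing $f_{\Lambda_n}(\bx\cup\{u\})/f_{\Lambda_n}(\bx)$ with $\lambda(u,\bx)$). The obstacle is to pass to the limit: for a test function $h(u,\bx)$ with compact support in $u$ and depending only on points within some fixed distance, the finite-range property \eqref{fr} makes $\lambda(u,\bx)$ a local (and bounded, by \eqref{ls}) functional of $\bx$, so both sides of the GNZ equation are continuous, bounded functionals under the vague topology; boundary effects from $\Lambda_n$ disappear because for $u$ well inside $\Lambda_n$ the $\tilde R$-neighborhood is contained in $\Lambda_n$. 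Combining continuity, the stochastic domination for uniform integrability, and the averaging, one concludes that the GNZ identity holds under $P$. The hardest step is exactly this interchange of limit and GNZ functional: care is needed because the stationarization mixes boundary and bulk contributions, and one has to show that the boundary contribution has vanishing relative weight as $|\Lambda_n|\to\infty$, which is precisely where \eqref{fr} and the uniform Poisson domination coming from \eqref{ls} are jointly indispensable.
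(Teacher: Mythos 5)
The paper does not prove this proposition at all: it is quoted as a known result and attributed to the reference \cite{bertin1999existence}, so there is no internal proof to compare against. Your sketch is the classical thermodynamic-limit construction (finite-volume Gibbs measures, stationarization, compactness, passage to the limit in the equilibrium equations), which is indeed the strategy underlying that reference and the broader literature (Ruelle, Preston, Georgii--Zessin, Dereudre), so the overall architecture is sound and appropriate.

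There is, however, one genuine technical gap in the step you yourself flag as the hardest. You propose to extract a limit in the \emph{vague} (weak) topology on $N_{lf}$ and then pass to the limit in the GNZ identity by arguing that both sides are ``continuous, bounded functionals'' of the configuration. This fails in general: the map $\bx \mapsto \lambda(u,\bx)$ is local and bounded under \eqref{fr} and \eqref{ls}, but it is \emph{not} continuous for the vague topology unless $\phi$ is continuous, and the models covered by the proposition (Strauss-type step functions, hard cores where $\phi$ vanishes on $[0,\delta]$) have discontinuous $\phi$. Likewise $\bx\mapsto\sum_{u\in\bx}h(u,\bx\setminus u)$ is discontinuous at configurations with points on the boundary of the support of $h$. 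The standard repair is to work with the finer \emph{local convergence} topology (convergence of expectations of all bounded local, not necessarily continuous, functions) and to obtain relative compactness of the stationarized finite-volume measures there, typically via a uniform bound on the specific entropy relative to the Poisson process (which \eqref{ls} furnishes), rather than via vague tightness alone. With that substitution, and your correct observations that \eqref{fr} localizes the conditional intensity and that the boundary contribution of the stationarization has vanishing relative weight, the argument goes through; without it, the limiting step is not justified.
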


More specifically, for pairwise interaction point processes, we have the following useful result.

\begin{corollary} \label{cor:existence}
Let $\lambda :  \mathbb{R}^d \times {N}_{lf} \to \mathbb{R}^{+}$ given by~\eqref{pipp}-\eqref{condint}. Assume 
\begin{itemize}
\item[(a)] either  $g(r)=-\infty$ (i.e. $\phi(r)=0$) for $r\in [0,\delta]$ (with $\delta>0$) and $g(r)=0$ (i.e. $\phi(r)=1$) for $r\ge \tilde R$ for some $0<\tilde R<\infty$.
\item[(b)] or $g(r)\ge0$ (i.e. $\phi(r)\le 1$) for  $r\in [0,\tilde R]$  and $g(r)=0$ (i.e. $\phi(r)=1$) for  $r\ge \tilde R$ for some $0<\tilde R<\infty$;
\end{itemize}
Then, the corresponding Papangelou conditional intensity satisfies \eqref{ti}, \eqref{fr} and \eqref{ls} and thus Proposition~\ref{prop0} applies.  	
\end{corollary}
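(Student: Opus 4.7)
The plan is to verify the three structural properties \eqref{ti}, \eqref{fr}, and \eqref{ls} separately for the Papangelou conditional intensity $\lambda$ given by~\eqref{pipp}; once this is done, Proposition~\ref{prop0} applies verbatim.

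Translation invariance is immediate from the form of~\eqref{pipp}: the product depends on $(u,\bx)$ only through the distances $\|v-u\|$, which are preserved by the joint translation $(u,\bx)\mapsto (\mathrm{0},\uptau_u\bx)$. Finite range is equally direct in both~(a) and~(b), since by hypothesis $\phi(r)=1$ whenever $r\ge \tilde R$; every factor attached to a point $v$ with $\|v-u\|\ge\tilde R$ therefore equals $1$ and drops out of the product, yielding $\lambda(u,\bx)=\lambda(u,\bx\cap B(u,\tilde R))$.

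The substantive step is local stability. Case~(b) is one line: $\phi\le 1$ on $[0,\tilde R]$ and $\phi=1$ on $[\tilde R,\infty)$, so $\phi\le 1$ everywhere, hence $\lambda(u,\bx)\le \beta$ uniformly and one takes $\bar\lambda=\beta$. In case~(a), the hard core $\phi\equiv 0$ on $[0,\delta]$ forces $\lambda(u,\bx)=0$ whenever $\bx\cap B(u,\delta)\ne\emptyset$; otherwise only the points $v$ lying in the annulus $B(u,\tilde R)\setminus B(u,\delta)$ contribute a non-unit factor. Combining boundedness of $\phi$ on this annulus (an implicit regularity assumption, automatic in the paper's setting since $g=\log\phi$ is represented by a finite series of bounded basis functions) with a packing argument---for hard-core admissible $\bx$, the balls $B(v,\delta/2)$ centred at these contributing points are pairwise disjoint and lie inside $B(u,\tilde R+\delta/2)$, so their cardinality is at most some $N=N(\delta,\tilde R,d)$---yields the uniform bound $\lambda(u,\bx)\le \beta M^{N}$ with $M=\sup_{(\delta,\tilde R]}\phi$.

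I expect the only real subtlety to be the uniform-in-$\bx$ character of~\eqref{ls} in case~(a): the packing bound is natural only on hard-core admissible configurations, so one must either restrict attention to such $\bx$ or modify $\lambda$ to vanish on the $\bX$-null set of configurations containing a pair at distance less than $\delta$, which is harmless for Proposition~\ref{prop0}. Otherwise, no deeper machinery is needed; everything reduces to direct inspection of the product in~\eqref{pipp}.
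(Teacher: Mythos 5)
Your proposal is correct and follows exactly the route the paper leaves implicit: the corollary is stated without a written proof, the verification of \eqref{ti} and \eqref{fr} being immediate from the product form of \eqref{pipp}, case (b) of \eqref{ls} following from $\phi\le 1$, and case (a) resting on the standard hard-core packing bound you give. Your closing remark is also well taken --- the uniform bound in case (a) genuinely requires restricting to hard-core admissible configurations (or setting $\lambda$ to zero off that set) together with boundedness of $\phi$ on $[\delta,\tilde R]$, two points the paper glosses over but which are harmless for invoking Proposition~\ref{prop0}.
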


In the situation (b), $g$ is non-negative which necessarily yields a repulsive PIPP. In the situation (a), assuming $g(r)=-\infty$ for $r\in[0,\delta]$  means that two points at distance smaller than $\delta$ are forbidden. This condition is known as hard core condition, see \cite{dereudre2019introduction} and the references therein. The hard core condition at small scales enables $g$ to eventually take negative values when $r\ge \delta$, which can result in attractive patterns. In both set of assumptions (a)-(b), we assume that the pairwise interaction has finite range. Infinite range PIPP such as the Lennard-Jones model \cite[see existence conditions in ][]{ruelle69,ruelle70} are not considered in this paper. Assuming~\eqref{fr} is relatively standard in statistical modelling and inference for GPP \cite[see e.g.][]{jensen1994asymptotic,billiot2008maximum,bainference}. We return to this limitation in Section~\ref{sec:conclusion}. 
Assumptions considered in Corollary~\ref{cor:existence} are satisfied by classical examples of PIPP like the Strauss model, the piecewise Strauss model and their hard core versions \cite{strauss1975model}, the annulus piecewise Strauss model proposed by~\cite{stucki2014bounds}, the Diggle-Graton model \cite{diggle1984monte}, the Diggle-Gates-Stibbard model \cite{diggle1987nonparametric}, the Fiksel model \cite{fiksel1984estimation}, etc. All these examples are in particular implemented in the \texttt{R} package \texttt{spatstat} \cite{baddeley2015spatial}.

Finally, let us add the following comment that the local stability property implies a stochastic domination of the GPP by a homogeneous Poisson point process. Hence, for all $c>0,p\ge 1$, $A \subset \mathbb{R}^d$, $A$ bounded,  $\EE \left[ N(A)^p \exp(cN(A))\right] < \infty$. Note that this upper-bound is trivial in the hard core situation as this constraint implies that $N(A)$ is an almost surely bounded random variable.

To illustrate this section and this paper, we consider four examples of pairwise interaction functions. To ease notation, it is sometimes useful to encompass both cases (a)-(b), and thus assume that $g(r)=-\infty$ (or $\phi(r)=0$) for $r\in[0,\delta]$ with $\delta\ge 0$ (thus $\delta=0$ means no hard core) and $g(r)=0$ (or $\phi(r)=1$) for $r\ge \tilde R:=R+\delta$ for some $R<\infty$. The set of functions $\phi_k$ for $k=1,\dots,4$ is shown in Figure~\ref{fig:Pif} (first column). The logarithm of these functions, denoted by $g_k=\log \phi_k$ is depicted in Figure~\ref{fig:Pif} (second column). 
\begin{enumerate}
\item Let $\delta=0$ and $R=0.08$
\[  \phi_1(r) =  \left\{
\begin{array}{ll}
      \alpha + \Big( 1-\alpha \Big) \Big(\frac{r}{R}\Big)^{2} & 0\le r \leq R \\
      1 & \mbox{otherwise} \\
\end{array} 
\right. \]
where $\alpha=0.05$.
\item Let $\delta=0$ and $R=0.08$
\[
\phi_2(r) =  \left\{ \begin{array}{ll} 0.5 \quad  \; \; r \leq 0.01 \\ 
        0.2  \quad \; \; 0.01 < r \leq 0.04 \\
        0.6  \quad \; \; 0.04 < r \leq 0.06 \\
        0.8  \quad \; \; 0.06 < r \leq R \\
        1 \quad \quad \;  r > R.
        \end{array} \right.
\]
\item Let $\delta=0.01$ and $R=0.07$. 
\[
\phi_3(r) = \left\{ \begin{array}{ll} 0 \qquad \qquad \qquad \quad \quad \quad \, r \leq \delta \\ 
        a r^3 + b r^2 + c r + d \quad \; \; \; \, \delta < r \leq R + \delta \\
        1 \qquad \qquad \qquad \quad \quad \quad \, r > R + \delta
        \end{array} \right.
\]
where the real numbers $a$, $b$, $c$ and $d$ are such that the following conditions are satisfied: $\phi_2(\delta) = 1.3$, $\phi_2(0.03) = 1.05$, $\phi_2(0.06) = 1.2$ and $\phi_2(R + \delta) = 1$. \\
\item Let $\delta=0.01$ and $R=0.07$
\[
\phi_4(r) =  \left\{ \begin{array}{ll} 0 \qquad \qquad \qquad  \quad \quad  \quad \qquad \qquad \qquad \qquad \qquad  \; \quad r \leq \delta \\ 
        \eta \Big(1 + \cos(\frac{3 \pi}{2} \cdot \frac{r-\delta}{R})\Big) + \Big(1-\eta \Big) \Big( (r-\delta)/R \Big)^\gamma \quad \; \; \delta < r \leq R + \delta \\
        1 \qquad \qquad \qquad \quad \qquad \qquad \qquad \qquad \qquad \qquad \quad r > R + \delta
        \end{array} \right.
\]
where  $\eta=2/3$ and $\gamma=1$.
\end{enumerate}

It is straightforwardly seen that the functions $\phi_k$ (or $g_k$) for $k=1,\dots,4$ satisfy the assumptions of Corollary~\ref{cor:existence}. The function $\phi_1$ is a slight modification of the Diggle-Gratton interaction \cite[see e.g.][]{baddeley2015spatial} where we ensure that $\phi_1(0)=0.05>0$ (which is required by our Condition~\ref{C:psi}). The second example $\phi_2$ is a piecewise Strauss process (with no hard core). The first two examples yield repulsive patterns as $\phi_k(r)\le 1$ (i.e. $g_k(r)\ge 0$) for $k=1,2$. For some values of $r$, $\phi_k(r) > 1$, for $k=3,4$. So, to ensure that processes with such interaction functions exist, we must include a hard core condition (see Corollary~\ref{cor:existence}(a)). The PIPP with interaction function $\phi_3$ yields clustering at all scales as $\phi_3(r) \geq 1$ for all distances $r$ while the interaction function $\phi_4$ generates point patterns with a mixture of clustering and inhibition. More precisely, we have clustering for $0.01 \leq r \leq 0.0275$ and inhibition for $0.0275 \leq r \leq 0.08$. The third column in Figure~\ref{fig:Pif} depicts one realization of each PIPP with interaction functions $\phi_k$ (for $k=1,\dots,4$) over the unit square. These simulations are performed using the Metropolis-Hastings algorithm applied to Gibbs point processes \cite{geyer1994simulation} and implemented in the \texttt{R} package \texttt{spatstat}.

\begin{figure}[!ht]
\begin{center}
\renewcommand{\arraystretch}{0}
\includegraphics[width=1\textwidth]{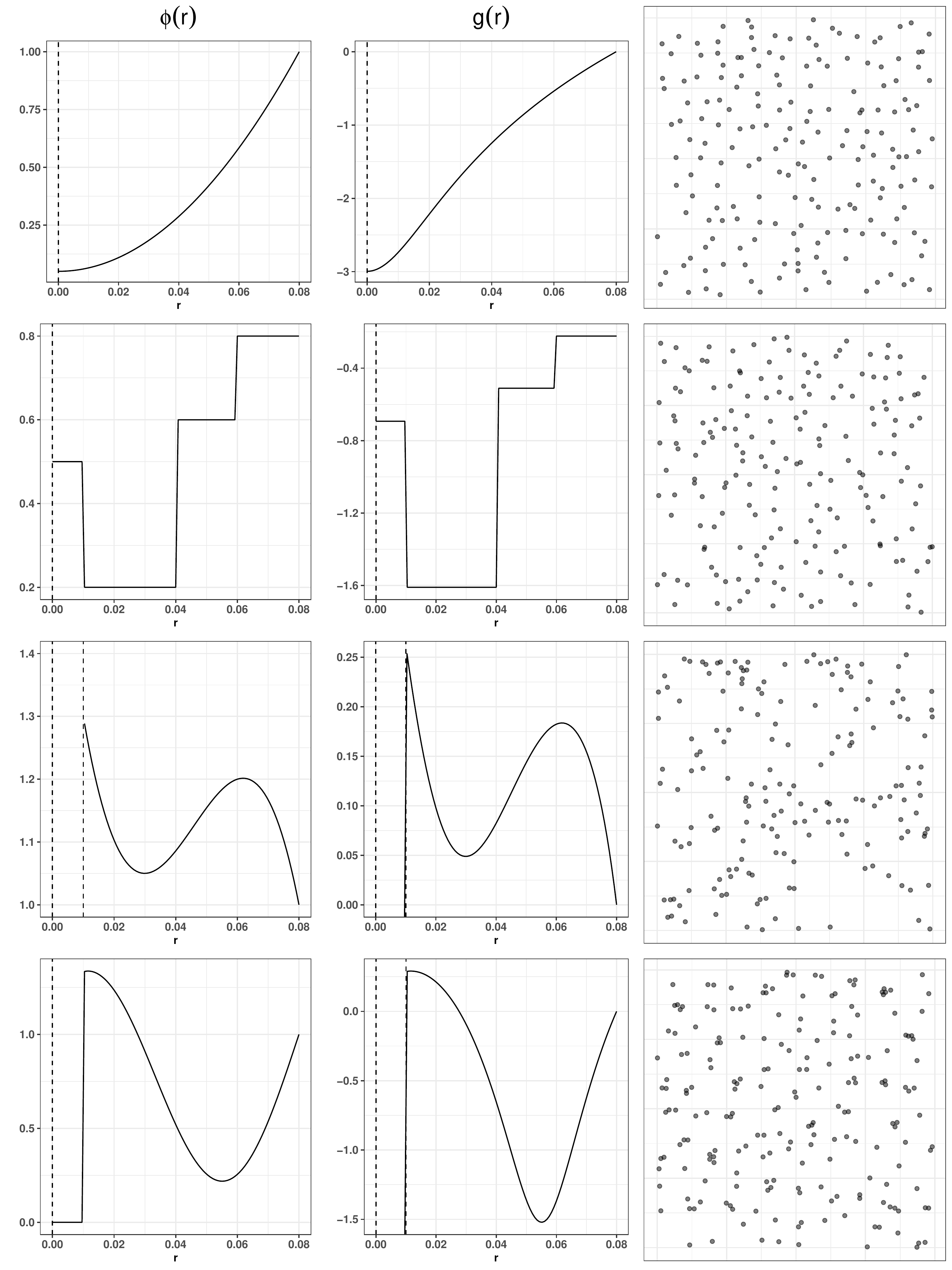} 
\caption{For $k=1,\dots,4$, true pairwise interaction functions (PIF) $\phi_k$ (first column), logarithm of PIFs $g_k=\log \phi_k$ (second column), and one simulated point pattern  over $W=[0,1]^2$ (third column). The activity parameter $\beta$ is set to $1827$, $1240$, $119$ and $1316$ respectively.}
\label{fig:Pif}
\end{center}
\end{figure}

\section{Methodology} \label{sec:meth}

\subsection{Orthogonal series estimation} \label{subsec:orth}

We assume that $g$ satisfies assumptions of Corollary~\ref{cor:existence}.  Our goal is  to estimate $g$ on the interval $[\delta,R+\delta]$.To this end, we decompose $g(\cdot+\delta)$ on a basis of orthonormal polynomials on $[0,R]$. 
Orthonormal polynomials defined on $[0,R]$ are classically built from an inner product of the form $<a,b>_w=\int_0^R a(r)b(r)w(r)\mathrm d r$, where $w$ is a non-negative continuous weight function, \cite[see e.g.][]{szeg1939orthogonal}.

We denote by $\{\varphi_k\}_{k \geq 1}$ the orthonormal basis of functions on $[0,R]$ with respect to some weight function $w$, thus satisfying $\int_0^R \varphi_k(r) \varphi_l(r) w(r) \mathrm{d}r = \mathbf{1}(k=l)$. 
Let $L^2_w([0,R])$ be the set of squared integrable functions on $[0,R]$ with respect to the inner product $<\cdot,\cdot>_w$.
Let us cite a few examples of orthonormal polynomials.
The first one is the cosine basis  $\varphi_1(r)=1/\surd{R}$, $\varphi_k(r)=\sqrt{2/R} \cos \{ (k-1) \pi r/ R \}, \, k \geq 2$ with $w(r)=1$. The second one is the Haar system, defined by $\varphi_1(t)=1/\sqrt{R}$ and  where for $k=2^m+\ell$ with $\ell=1,\dots,2^m$ and 
\[
\varphi_k(r)=\frac{\sqrt{2^m}\tilde \varphi_k(r/R)}{\sqrt{R}}   \text{ where } 
\tilde \varphi_k(r)  = 
\mathbf{1} \left(r\in I_{2k-2} \right)
-
\mathbf{1} \left(r\in I_{2k-1} \right)
\]
with $I_k=[k/2^{m+1},(k+1)/2^{m+1}]$ and $w(r)=1$. The third one with $w(r)=r^{d-1}$  is the Fourier-Bessel basis given by
\[
\varphi_k(r)=\frac{2^{1/2}}{R J_{\nu+1}(\alpha_{\nu,k})} \, J_{\nu}(r \alpha_{\nu,k} / R) r^{-\nu}, \, k \geq 1,
\]
where $\nu=(d-2)/2$, $J_{\nu}$ is the Bessel function of the first kind of order $\nu$ and $\{ \alpha_{\nu,k} \}_{k=1}^{\infty}$ is the sequence of successive positive roots of $J_{\nu}(r)$. Other examples of orthonormal polynomials include for example Legendre polynomials, Jacobi polynomials, Chebyshev polynomials of the first and second kind, Gegenbauer polynomials, etc.

Let $g\in L^2_w([\delta,R+\delta])$. Its orthogonal series expansion writes for any $r\in [\delta,R+\delta]$.
\begin{align} \label{orth}
g(r) = \sum_{k = 1}^{\infty} \theta_k \varphi_k(r - \delta).
\end{align}
The coefficients $\theta_k$ in the expansion are therefore given by $\theta_k=\int_0^R g(r+\delta) \varphi_k(r) w(r) \mathrm{d}r$ and satisfy $\sum_{k \geq 1} \theta_k^2 < \infty$. Now, with this series expansion of the logarithm of the pairwise interaction function, we rewrite equation $(\ref{condint})$ as follows:
\begin{align} \label{condintfty}
\log \lambda(u,\bx;\btheta) = \theta_0 + \sum_{k=1}^{\infty} \theta_k S_k(u,\bx)
\end{align}
where $\btheta = (\theta_0,\theta_1,\ldots) \in \ell^2(\mathbb N)=\{\by \in \mathbb{R}^{\mathbb N}: \|\by\|^2:=\sum_{k\ge 0 }y_k^2<\infty\}$ and\linebreak $S_k(u,\bx) = \sum_{v \in \bx} \varphi_k (\Vert v - u \Vert - \delta)$. After truncating the above series to a fixed $K$, Equation $(\ref{condintfty})$ becomes
\begin{align} \label{condintK}
\log \lambda(u,\bx;\btheta_{K}) = \btheta_K^\top \bS_K(u,\bx)
\end{align}
where $\btheta_K = (\theta_0,\theta_1,\ldots,\theta_K)^\top \in \mathbb{R}^{K+1}$ and $\bS_K(u,\bx) = (1,S_1(u,\bx),\ldots,S_K(u,\bx))^\top$. We stress on the fact that $\lambda$, as defined in $(\ref{condintK})$, does not correspond to the conditional intensity of the spatial Gibbs point process $\bX$. However, the interest is that this truncation brings back the model to an exponential family model. Therefore, standard methods including the pseudolikelihood function can be used to estimate $\btheta_K$.  We define 
\begin{align} \label{lpl}
\mathrm{LPL}(\bX;\btheta_K) = \sum_{u \in \bX \cap W} \log \lambda (u,\bX ; \btheta_{K}) - \int_{W} \lambda (u,\bX;\btheta_{K}) \mathrm{d}u.  
\end{align}
Note that taking $K=\infty$ in (\ref{lpl}) precisely leads to the log-pseudolikelihood function of the point process $\bX$. In the present paper, the function $\mathrm{LPL}(\bX;\btheta_K)$ will be referred to as the truncated log-pseudolikelihood of the point process $\bX$ and its derivative with respect to $\btheta_{K}$ is given by
\[
\boldsymbol{e}_{K}(\bX;\btheta_K)=  \sum_{u \in \bX \cap W} \bS_K(u, \bX \setminus u) -  \int_{W} \bS_K(u, \bX)\lambda (u,\bX;\btheta_{K}) \mathrm{d}u. 
\]
The estimate $\hat{\btheta}_K$ is obtained by maximizing the function $\mathrm{LPL}$ or equivalently by solving $\boldsymbol{e}_{K}(\bX;\btheta_K)=0$.
The maximization of~\eqref{lpl} is  fast and stable, mainly since $\mathrm{LPL}(\bX;\btheta_K)$ turns out to be a concave function of $\btheta_K $\cite[see][]{jensen1991pseudolikelihood}. 
We can even say more by slightly anticipating on Remark~\ref{remMat}: for $n$  large enough (that is when the amount of data increases) the sensitivity matrix derived from the score $\boldsymbol{e}_{K}(\bX;\btheta_K)$ is positive definite ensuring that the maximum is indeed unique. From a computational point of view, \cite{baddeley2000practical} show that~\eqref{lpl} can be  approximated by a quasi-Poisson regression. The methodology can be quickly and efficiently implemented using standard software for generalized linear models. We did not consider other composite likelihoods such as the logistic one \cite{baddeley2014logistic} or other estimating functions. Indeed, on the one hand, the logistic regression likelihood can be viewed as an approximation of the pseudolikelihood which can solve some numerical problems of the pseudolikelihood when discretizing the integral term in~\eqref{lpl} is problematic. We did not encounter such an issue in the problem considered in this paper.
On the other hand, \cite{coeurjolly2016towards} investigated the question of optimal estimating functions for parametric stationary  Gibbs point processes and showed that the procedure is more computationally intensive, less stable numerically than the pseudolikelihood for a little gain in terms of mean squared error. 

Let us add two more comments. First, the estimation procedure does not produce an explicit estimator of the coefficients $\theta_k$ as in \cite{jalilian2019orthogonal} since $\mathrm{LPL}(\bX;\btheta_K)$ is a nonlinear function of $\btheta_K$. Second, from GNZ formula~(\ref{gnz}), $\boldsymbol{e}_{K}(\bX;\btheta_K)$ is not necessarily an unbiased estimating function since its expectation does not vanish under the true underlying Gibbs model.  This would be true if $K=\infty$ or if $g$ has a finite series expression with $K^{\star}$ orthonormal functions such that $K^{\star} \leq K$.

An estimator of $g$ is then obtained by replacing $\theta_k$  by $\hat{\theta}_k$ in~\eqref{orth} and by truncating the infinite sum up to some integer $K \geq 1$
\begin{align} \label{orthest}
\hat{g}(r;K) =  \sum_{k=1}^K \hat{\theta}_k \varphi_k(r - \delta).
\end{align} 
A more general form could be given by 
\begin{align} \label{orthinfty}
\hat{g}(r;b) = \sum_{k=1}^{\infty} \tau_k \hat{\theta}_k \varphi_k(r - \delta)
\end{align}
where $\tau=\{\tau_k\}_{k=1}^{\infty}$ is a smoothing/truncation scheme, see for e.g.~\cite{wahba1981data}. To ease the presentation we do not consider this possibility in this paper and focus on the simplest one that is $\tau_k = \mathbf{1}(k \leq K)$. Finally, an estimator of the pairwise interaction function $\phi$ is given by 
\begin{align} \label{phiest}
\hat{\phi}(r;K) = \exp \left( \sum_{k=1}^{K} \hat{\theta}_k \varphi_k(r - \delta) \right), \quad r\in[\delta,R+\delta].
\end{align}
Using the orthonormality of the basis function, the integrated squared error (ISE) takes the simple form
\begin{align} \label{eq:ise}
\mathrm{ISE}(\hat{g}(\cdot \,;K)) &= \int_{\delta}^{R+\delta}  \{ \hat{g}(r;K) - g(r) \big \}^2 w(r-\delta) \mathrm{d}r 
=  \sum_{k=1}^{K} (\hat \theta_k-\theta_k)^2 + \sum_{k>K} \theta_k^2.
\end{align}



\section{Asymptotic properties} \label{sec:asymp}
Let $K_n$ be a sequence of integers such that $K_n \, \to \, \infty$ as $n \, \to \, \infty$. We denote by $\btheta^{\star}_{K_n}=(\theta_k^{\star})_{k=0,\ldots,K_n}$ the $(K_n+1)$-dimensional vector of true coefficients, i.e. the first $K_n+1$ elements of $\btheta^{\star}=(\theta_k^{\star})_{k=0,\ldots,\infty}$. Generally, we let $\btheta=(\theta_k)_{k=0,\ldots,\infty}$ and $\btheta_{K_n}=(\theta_k)_{k=0,\ldots,K_n}$. When there is no ambiguity, $\btheta_{K_n}$ will also denote the sequence of $\mathbb{R}^{\mathbb{N}}$ such that $\theta_k=0$ for all $k \geq K_n+1$.
 Let
\begin{align} \label{lpln}
\mathrm{LPL}_n(\bX;\btheta_{K_n}) = \sum_{u \in \bX \cap W_n} \log \lambda (u,\bX;\btheta_{K_n}) - \int_{W_n} \lambda (u,\bX;\btheta_{K_n}) \mathrm{d}u  
\end{align}
be the truncated log-pseudolikelihood function of $\bX$ observed on $W_n$. Its derivative with respect to $\btheta_{K_n}$ equals
\begin{align} \label{en}
\boldsymbol{e}_{K_n}(\bX;\btheta_{K_n})=  \sum_{u \in \bX \cap W_n} \bS_{K_n}(u, \bX \setminus u) -  \int_{W_n} \bS_{K_n}(u, \bX)\lambda (u,\bX;\btheta_{K_n}) \mathrm{d}u. 
\end{align}
We let $\hat{\btheta}_{K_n} = \underset{\btheta_{K_n} \in \mathcal{S}_m}{\text{argmax}} \, \mathrm{LPL}_n(\bX;\btheta_{K_n})$ where the set $\mathcal{S}_m$ will be made more precise in \ref{C:anKn}.
The estimator $\hat{g}_n$ of $g$ obtained from $\bX$ observed on $W_n$ is given, for any $r\in [\delta,R+\delta]$, by
\begin{align} \label{phin}
\hat{g}_n(r;K_n) = \sum_{k=1}^{K_n}  \hat{\theta}_{k,n} \, \varphi_k(r - \delta).
\end{align}
We define the integrated squared error (ISE) of $\hat{g}_n$ by
\begin{align} \label{ise}
 \mathrm{ISE}(\hat{g}_n(\cdot\,;K_n)) = \left \| \hat{g}_n - g \right \|^2_{\text{L}^2_w([\delta,R+\delta])}  = \int_{0}^{R}  \big \{ \hat{g}_n(r+\delta;K_n) - g(r+\delta) \big \}^2 w(r)\mathrm{d}r.
\end{align}
Using these notation for instance $\boldsymbol{e}_{K_n}(\bX;\btheta^{\star})$ corresponds to (\ref{en}) where $\btheta_{K_n}$ is replaced by $\btheta^{\star}$. And it is to be pointed out that $\boldsymbol{e}_{K_n}(\bX;\btheta^{\star})$ is an innovation type statistic~\cite[see][]{baddeley2005residual,coeurjolly2013fast} and in particular $ \EE \, [ \boldsymbol{e}_{K_n}(\bX;\btheta^{\star}) ] = 0$ from the GNZ formula.

\subsection{Conditions} \label{subsec:cond}
For $\by, \, \bz \in \ell^2(\mathbb{N})$, we let $\by^\top \bz = \sum_{k\ge 0} y_k \, z_k$. Let $\btheta \in \mathbb{R}^{\mathbb{N}}$.
Denote for $\gamma\ge 0$ and $\by\in \ell^2(\mathbb{N})$ by $\|\by\|_\gamma = |y_0|+\sum_{k\ge 1}|k|^\gamma |y_k|$ and we let $\mathcal S_\gamma:= \{\by \in \ell^2(\mathbb{N}: \|\by\|_\gamma<\infty\}$.
We define the following stochastic infinite matrices for $\btheta \in {\mathbb R}^{\mathbb N}$ 
\begin{align*} 
\mathbf{A} (\bX;\btheta)=&{\int_{W_n} \bS(u, \bX) \bS(u, \bX)^\top \lambda (u,\bX;\btheta) \mathrm{d}u}, \\
\mathbf{B}(\bX;\btheta)=&  {\int_{W_n}   \int_{W_n} \hspace{-.25cm}  \bS(u,\bX)\bS(v,\bX)^\top \lambda(u,\bX;\btheta)\lambda(v,\bX;\btheta)  
\left(
1-\phi_{\btheta}( \|v-u\|)
\right)
\mathrm{d}v \mathrm{d}u} \\
& + {\int_{W_n} \int_{W_n}  \widetilde{\boldsymbol{\varphi}}(\|v-u\|-\delta) \widetilde{\boldsymbol{\varphi}}(\|v-u\|-\delta)^\top  \lambda(\{u,v\},\bX;\btheta)\mathrm{d}v \mathrm{d}u} 
\end{align*}
where $\bS(u, \bX)=(1,S_1(u,\bX),S_2(u,\bX),\dots)^\top$, where for $r \in (\delta,R+\delta)$, 
$\log \phi_{\btheta}(r) = \btheta^\top_{-0}  \boldsymbol{\varphi}(r - \delta)$ with $\btheta_{-0}=(\theta_1,\dots)^\top$, $\boldsymbol{\varphi}(r)=(\varphi_1(r),\dots)^\top$ and finally where $\widetilde{\boldsymbol{\varphi}}(r-\delta)=(0,\varphi_1(r),\dots)^\top$. Note that in particular  $\phi(\cdot)=\phi_{\btheta^\star}(\cdot)$.

For a stochastic matrix $\mathbf{M} (\bX;\btheta)$ we use the notation $\mathbf{M}_{K_n} (\bX;\btheta) = \left(\mathbf{M} (\bX;\btheta)\right)_{i,j=0,\dots,K_n}$, $\mathbf{M} (\btheta) = \EE \left[ \mathbf{M} (\bX;\btheta) \right]$, $\mathbf{M}_{K_n} (\btheta) = \left(\mathbf{M} (\btheta)\right)_{i,j=0,\dots,K_n}$ and $\widebar{\mathbf{M}} (\btheta)=|W_n|^{-1} \mathbf{M} (\btheta)$. 

We may check using the GNZ formula and \cite[Lemma~3.1]{coeurjolly2013fast} that first $\mathbf{A}_{K_n} (\bX;\btheta)  = - \frac{\mathrm d}{\mathrm d \btheta} \; \boldsymbol{e}_{K_n}(\bX;\btheta)$, whence $\mathbf{A}_{K_n} (\btheta) = \EE [ \mathbf{A}_{K_n} (\bX;\btheta) ]$ is the sensitivity matrix and second that $\Var \left[ \boldsymbol{e}_{K_n}(\bX;\btheta^{\star}) \right]  = \mathbf{A}_{K_n} (\btheta^{\star}) + \mathbf{B}_{K_n} (\btheta^{\star})$.

We are now able to describe the general assumptions required by our results.

\begin{enumerate}[($\mathcal C$.1)]
\item  $(W_n)_{n \geq 1}$ is an increasing sequence of regular convex compact sets, such that  $W_n \to \mathbb{R}^d$ as $n \to \infty$ and there exists $p\ge 1$ such that $\sum_{n\ge 1} \left(\frac{|W_n\setminus W_n^-|}{|W_n|}\right)^p<\infty$ where $W_n^-=W_n\ominus(R+\delta)$ is the domain $W_n$ eroded by $R+\delta$. \label{C:Wn}
\item We assume that the Papangelou conditional intensity function has the log-linear specification given by~\eqref{condint} and that $g\in L^2_w([\delta,R+\delta])$ satisfies the assumptions of Corollary~\ref{cor:existence}.  \label{C:model}
\item We assume (i) $K_n \to \infty$ as $n \to \infty$; (ii) there exists $m \ge 0$ such that $\|\varphi_k\|_\infty=O(k^m)$;  (iii) $\sup_{r\in [0,R]}(r+\delta)^{d-1}/w(r)<\infty$; (iv) for any $n\ge 1$, $\hat{\btheta}_{K_n}, \, \btheta^{\star} \in \mathcal{S}_m$.
\label{C:anKn}
\item We assume  (i) $\sup_{r\in[0,R]} w(r)<\infty$; (ii)  $\psi = 1 + |\sigma_{d}| \, e^{\theta_0^\star} {\int_{\delta}^{R+\delta} (1 - \phi(r) ) \, r^{d-1} \, \mathrm{d}r }>0$ where $|\sigma_{d}|$ is the surface area of the unit sphere in $\mathbb{R}^d$; (iii) the set $[0,R]\setminus \Delta$ has zero Lebesgue measure where $\Delta=\{r\in [0,R]: (r+\delta)^{d-1}\phi(r+\delta) / w(r)>0 \}$.   \label{C:psi}
\end{enumerate}

Using the finite range property and the stationarity of $\bX$, we refer the reader to \cite{coeurjolly2013fast} where it is proved under conditions \ref{C:Wn}-\ref{C:model} that 
\begin{align*}
 \widebar{\mathbf{A}}(\btheta)& = \EE \left[ \bS(\mathrm{0}, \bX) \bS(\mathrm{0}, \bX)^\top \lambda (\mathrm{0},\bX;\btheta) \right] \\
\widebar{\mathbf{B}}(\btheta) & = \EE \left[ {\int_{\mathcal B(0)} \hspace{-.25cm}  \bS(\mathrm{0},\bX)\bS(v,\bX)^\top \lambda(\mathrm{0},\bX;\btheta)\lambda(v,\bX;\btheta)  \left(
1-\phi_{\btheta}( \|v-u\|)
\right)} \mathrm{d}v  \right] \\
+ & \,  \EE \left[ {\int_{\mathcal B(0)}  \widetilde{\boldsymbol{\varphi}}(\|v\|-\delta) \widetilde{\boldsymbol{\varphi}}(\|v\|-\delta)^\top \lambda(\{\mathrm{0},v\},\bX;\btheta)\mathrm{d}v}\right]
\end{align*}
where to ease notation $\mathcal B(u)$ stands for the ball (or annulus depending on the value of $\delta$)
$\left \{ v \in \mathbb{R}^d: \delta \leq \| v - u \| \leq R+\delta \right \}$.  
To get the exact expression of $\widebar{\mathbf{B}}(\btheta)$, we actually require that $\mathcal B(0) \subset W_n$, which is always true for $n$ large enough.

Condition~\ref{C:Wn} specifies our asymptotic framework. If for instance, \linebreak$W_n=[-(n/2)^{1/d},(n/2)^{1/d}]^d$,  then $|W_n|=n$ and $|W_n\setminus W_n^-|=O(n^{1-1/d})$ so the last part of \ref{C:Wn} is satisfied by choosing any $p>d$. Condition \ref{C:model} in particular ensures the existence of a stationary Gibbs measure.

Let us discuss Condition \ref{C:anKn}. The requirement that $K_n \to \infty$ and the existence of $m$ are natural. Note that $m$ is equal to 0 for the cosine basis, to $1/2$  for the Haar one and to $(d-1)/2$ (when $d\ge 2$) for the Fourier-Bessel one \cite[Section S5 in the supplementary material]{jalilian2019orthogonal}). (iii) is satisfied for the Haar and cosine basis for every $\delta>0$ since $w(r)=1$  and for the Fourier-Bessel one ($w(r)=r^{d-1}$) if $\delta=0$. This assumption is actually used to show that $\int_{0}^R \varphi_k(r)^2\mathrm d r=O(1)$ (used in Lemma~\ref{lem:Esk}). If (iii) is not valid (e.g. the Fourier-Bessel basis with $\delta>0$), we can only show that $\int_{0}^r \varphi_k(r)^2\mathrm d r=O(k^m)$. Theorem~\ref{THM:CONST} remains true but with different rates of convergence (see Section~\ref{sec:FB} in the Supplementary Material for a treatment of this case). By assuming \ref{C:anKn}, we implicitly assume that $g$ is lower and upper-bounded, which means that $\phi(r)$ is positive and upper-bounded for any $r\in [\delta,R+\delta]$ (this is the reason why we slightly modify the Diggle-Gratton pairwise interaction function, for which $\delta=0$, in such a way that $\phi(0)=0.05>0$).
To continue the discussion of Condition \ref{C:anKn}, let us define the sequence
\begin{equation} \label{eq:aK}
a_{K_n} = \sum_{k>K_n} k^m |\theta^\star_k|.
\end{equation}
As already noticed by \cite{hall1986rate} in the simpler context of orthogonal series density estimator, the rate at which the coefficients $\theta_k^\star$ converge to 0 determine the rate of convergence of density estimators. A similar behaviour is observed in our context since condition \ref{C:anKn} implies that $a_{K_n}\to 0$ as $n\to \infty$. Even more, we will show in Theorem~\ref{THM:CONST} that the consistency of $\hat \btheta_{K_n}$, the convergence of the ISE, etc strongly depend on the rate of convergence of $a_{K_n}$ to 0. We continue this discussion after the presentation of Theorem~\ref{THM:CONST}.


As seen in Lemma~\ref{lemma2} (ii), Condition \ref{C:psi} is used to ensure that the infinite matrix $\widebar{\mathbf A}(\btheta^\star) + \widebar{\mathbf B}(\btheta^\star)$ is positive definite, that is, from Lemma~\ref{lemma1}, to ensure that the normalized variance  $|W_n|^{-1}\Var[\mathbf{e}_{K_n}(\bX;\btheta^\star)] = |W_n|^{-1}( \mathbf{A}_{K_n}(\bX;\btheta^\star) + \mathbf{B}_{K_n}(\bX;\btheta^\star) )$ is positive definite for $n$ sufficiently large. The condition on $\psi$ depends only on the model. It is trivially satisfied when $\phi\le 1$, i.e. for purely repulsive PIPP (in particular for PIF1 and PIF2). We have  checked numerically  that this condition is satisfied for the two other examples PIF3 and PIF4 where $\phi$ can exceed 1. The condition related to $\Delta$ in \ref{C:psi} depends on the basis function. Remember that $\btheta^\star\in \mathcal S_m$ necessarily implies that $\phi(\cdot)>0$. So for the Fourier-Bessel basis for which $w(r)=r^{d-1}$, the condition is always satisfied. For the two other considered bases, for which $w(r)=1$, the condition is  obvious when $\delta>0$ and is also easily satisfied when $\delta=0$ since $r^{d-1}\phi(r)$ vanishes only at $r=0$.

\begin{remark} \label{remMat} As partly mentioned in the previous paragraph, Lemma~\ref{lemma2} (i)-(ii) ensures that the infinite matrices $\widebar{\mathbf{A}}(\btheta)$ and $\widebar{\mathbf{A}}(\btheta^\star)+\widebar{\mathbf{B}}(\btheta^\star)$ are positive definite. Those matrices play an important role in our asymptotic results as they correspond to some limit of the normalized sensitivity matrix and variance covariance matrix of the estimating equation $\mathbf{e}_{K_n}(\bX;\cdot)$. Even if this looks hidden in the proof of Lemma~\ref{lemma2}, assuming that the basis of functions $(\varphi_k)_{k\ge 1}$is orthonormal is crucial here.
\end{remark}
  
\subsection{Main results} \label{sec:result}

For a random vector $\bX_n$ and a sequence of real numbers $x_n$, the notation $\bX_n = O_{\mathrm P} (x_n)$ or $\bX_n = o_{\mathrm P} (x_n)$ means that $\| \bX_n \| =  O_{\mathrm P} (x_n)$ and $\| \bX_n \| =  o_{\mathrm P} (x_n)$. In the same way, the notation $\bV_n = O(x_n)$ and $\bM_n= O(x_n)$ mean that $\| \bV_n \| = O(x_n)$ and $\| \bM_n \| = O(x_n)$ for a vector $\bV_n$ and a squared matrix $\bM_n$. 
 
As detailed in the previous section, the nonparametric estimate of $g$ is based on the estimation of $\btheta_{K_n}^\star$, the first $K_n$ coefficients of its expansion on the basis of functions $\{\varphi_k\}_k$, which is done by minimizing the estimating equation~\eqref{en}. Proposition~\ref{PROP:BIAS} presented in the Supplementary Material provides conditions on the sequences $a_{K_n}$, $K_n$ and $|W_n|$ to get a control in probability of the estimating equation~\eqref{en} evaluated at $\btheta^\star_{K_n}$.

The estimator  of $\btheta_{K_n}^\star$, obtained as a minimization, is not explicit. As such, its bias and variance are hard to determine. Its rate of convergence in probability can nevertheless be derived from the one of the estimating equation~\eqref{en}. Then, the integrated squared error and the asymptotic normality of $\hat{g}_n(r;K_n)$ can be deduced. These results are summarized by Theorem~\ref{THM:CONST}. The pointwise convergence of $\hat{g}_n(r;K_n)$ and a multivariate extension of Theorem~\ref{THM:CONST}(ii) are provided in the Supplementary Material.
   
\begin{theorem} 
\label{THM:CONST}
Assume the conditions \ref{C:Wn}-\ref{C:anKn} hold.   
\begin{enumerate}[(i)]
\item As $n\to \infty$, asume that $a_{K_n} K_n^{1/2}\to 0$ and that $K_n^{m \vee 1}/|W_n| \to 0$. Then, there exists a maximizer $\hat{\btheta}_{K_n} \in \mathcal{S}_m$ of  $\, \mathrm{LPL}_n(\bX;\btheta_{K_n})$ such that
\begin{align} \label{eq:cons}
 { \| {\hat{\btheta}}_{K_n} -\btheta_{K_n}^{\star}\|} =  O_\mathrm{P} \left( {\frac{K_n^{(m\vee 1)/2}}{|W_n|^{1/2}}} + a_{K_n} {K_n}^{1/2} \right) 
\end{align}
and 
\[ \mathrm{ISE}(\hat{g}_n(\cdot\,;K_n)) =   O_\mathrm{P} \left( {\frac{K_n^{m\vee 1}}{|W_n|}} + a_{K_n}^2 {K_n} \right). 
\]
\item As $n\to \infty$, asume that $a_{K_n} K_n^{1/2}\to 0$, $a_{K_n}^2 K_n^{m+3}|W_n|^{-1/2}$ and that $K_n^{2/3(m+m\vee 1+2)}|W_n|^{-1} \to 0$. Then, for $r \in (\delta,R+\delta)$ 
\begin{align*}
s_n^{-1}(r) \left \{ \hat{g}_n(r;K_n) - g(r) \right \} \xrightarrow{d} \mathcal{N}(0,1)
\end{align*}
where 
\begin{align}
s_n^{2}(r) = & \widetilde{\boldsymbol{\varphi}}_{K_n}^\top(r - \delta) \,  \boldsymbol \Pi_{K_n} \, \widetilde{\boldsymbol{\varphi}}_{K_n}(r - \delta), \nonumber \\
\boldsymbol \Pi_{K_n} = & \mathbf{A}_{K_n}^{-1}(\btheta_{K_n}^\star) \,  \left \{ \mathbf{A}_{K_n}(\btheta^{\star}) + \mathbf{B}_{K_n}(\btheta^{\star}) \right \} \, \mathbf{A}_{K_n}^{-1}(\btheta_{K_n}^{\star}).\label{tauxn}
\end{align}
\end{enumerate}
\end{theorem}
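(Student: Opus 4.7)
The plan is a standard M-estimation argument, exploiting concavity of $\mathrm{LPL}_n$. Set $r_n:=K_n^{(m\vee 1)/2}|W_n|^{-1/2}+a_{K_n}K_n^{1/2}$. By concavity, it suffices to show that, for any $\varepsilon>0$ and $C$ large enough, $\mathrm{LPL}_n(\bX;\btheta^\star_{K_n}+Cr_n\mathbf u)<\mathrm{LPL}_n(\bX;\btheta^\star_{K_n})$ uniformly on $\|\mathbf u\|=1$ with probability at least $1-\varepsilon$; this forces a maximizer inside the ball of radius $Cr_n$. A second-order Taylor expansion reduces the problem to comparing $Cr_n\mathbf u^\top\boldsymbol{e}_{K_n}(\bX;\btheta^\star_{K_n})$, whose rate is delivered by the already-stated Proposition~\ref{PROP:BIAS}, with the quadratic form $\tfrac12 C^2 r_n^2\,\mathbf u^\top\mathbf{A}_{K_n}(\bX;\tilde\btheta)\mathbf u$. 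I would control the latter via Lemma~\ref{lemma2}(i) (uniform lower bound on the smallest eigenvalue of $\widebar{\mathbf{A}}_{K_n}(\btheta^\star_{K_n})$) and transfer the bound to $\mathbf{A}_{K_n}(\bX;\tilde\btheta)$ by a law-of-large-numbers/perturbation argument uniform in $\tilde\btheta$ inside the ball. The ISE bound follows for free from orthonormality,
\begin{align*}
\mathrm{ISE}(\hat g_n(\cdot;K_n))=\sum_{k=1}^{K_n}(\hat\theta_{k,n}-\theta^\star_k)^2+\sum_{k>K_n}(\theta^\star_k)^2\le\|\hat\btheta_{K_n}-\btheta^\star_{K_n}\|^2+a_{K_n}^2,
\end{align*}
using $\sum_{k>K_n}(\theta^\star_k)^2\le a_{K_n}^2$ when $m\ge 0$.

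\textbf{Part (ii).} Write $\hat g_n(r;K_n)-g(r)=\widetilde{\boldsymbol\varphi}_{K_n}^\top(r-\delta)(\hat\btheta_{K_n}-\btheta^\star_{K_n})-\sum_{k>K_n}\theta^\star_k\varphi_k(r-\delta)$, and invert the Taylor expansion from (i) into the linearisation
\begin{align*}
\hat\btheta_{K_n}-\btheta^\star_{K_n}=\mathbf{A}_{K_n}^{-1}(\btheta^\star_{K_n})\,\boldsymbol{e}_{K_n}(\bX;\btheta^\star_{K_n})+\mathbf{R}_n,
\end{align*}
where $\mathbf R_n$ collects the replacement of $\mathbf A_{K_n}(\bX;\tilde\btheta)$ by its mean at $\btheta^\star_{K_n}$. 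The plan is threefold: (a) bound the truncation residual $|\sum_{k>K_n}\theta^\star_k\varphi_k(r-\delta)|\le C\,a_{K_n}$ and show it is $o(s_n(r))$, using the rate assumption $a_{K_n}^2 K_n^{m+3}|W_n|^{-1/2}\to 0$ together with the lower bound $s_n(r)\gtrsim |W_n|^{-1/2}$ inherited from Lemma~\ref{lemma2}(ii); (b) prove $\widetilde{\boldsymbol\varphi}_{K_n}^\top(r-\delta)\mathbf R_n=o_\mathrm{P}(s_n(r))$ by combining~\eqref{eq:cons}, the bound $\|\widetilde{\boldsymbol\varphi}_{K_n}(r-\delta)\|=O(K_n^{m+1/2})$ from~\ref{C:anKn}(ii), and the spectral bound $\|\mathbf A_{K_n}^{-1}(\btheta^\star_{K_n})\|=O(|W_n|^{-1})$; (c) establish the CLT
\begin{align*}
s_n^{-1}(r)\,\widetilde{\boldsymbol\varphi}_{K_n}^\top(r-\delta)\,\mathbf A_{K_n}^{-1}(\btheta^\star_{K_n})\,\boldsymbol e_{K_n}(\bX;\btheta^\star_{K_n})\xrightarrow{d}\mathcal N(0,1).
\end{align*}

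\textbf{Main obstacle.} Step (c) is the heart of the matter: a CLT for a scalar linear functional $\mathbf c_n^\top\boldsymbol e_{K_n}(\bX;\btheta^\star_{K_n})$ of a truncated score whose dimension $K_n$ grows with $n$, and which, crucially, is \emph{not} mean-zero at $\btheta^\star_{K_n}$ (only at $\btheta^\star$, by GNZ). I would first isolate $\EE\,\boldsymbol e_{K_n}(\bX;\btheta^\star_{K_n})$ and show it contributes $o(s_n(r))$ after normalisation, which is exactly what the condition $a_{K_n}^2 K_n^{m+3}|W_n|^{-1/2}\to 0$ buys, and then attack the centred term using the finite-range property~\eqref{fr}: partition $W_n$ into cubes separated by more than $R+\delta$ and apply a Lyapunov-type CLT for short-range-dependent random fields in the spirit of~\cite{jensen1994asymptotic,coeurjolly2013fast}. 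Verifying Lyapunov's condition requires higher-order moment bounds on the score; once $\|\widetilde{\boldsymbol\varphi}_{K_n}\|_\infty=O(K_n^m)$ and $\|\mathbf A_{K_n}^{-1}(\btheta^\star_{K_n})\|=O(|W_n|^{-1})$ are plugged in, the exponent $2(m+m\vee 1+2)/3$ in the third rate condition is precisely the sharp threshold that keeps the Lyapunov ratio tending to zero. Tracking all the powers of $K_n$ simultaneously through these three estimates is the main technical burden.
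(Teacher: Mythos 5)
Your overall architecture matches the paper's: part (i) rests on Proposition~\ref{PROP:BIAS} for the score rate plus the uniform positive-definiteness of the sensitivity matrix (Lemmas~\ref{lemma1}--\ref{lemma2}), and part (ii) on a linearisation $\hat\btheta_{K_n}-\btheta^\star_{K_n}=\mathbf{A}_{K_n}^{-1}(\btheta^\star_{K_n})\boldsymbol{e}_{K_n}(\bX;\cdot)+\mathbf{R}_n$ followed by a CLT for a linear functional of the score. For (i) your concavity/objective-comparison argument is just the dual of the paper's route (the paper inverts the first-order Taylor expansion of the score equation, writes $\boldsymbol{e}_{K_n}(\bX;\btheta^\star_{K_n})=\mathbf{A}_{K_n}(\bX;\widetilde\btheta_{K_n})(\hat\btheta_{K_n}-\btheta^\star_{K_n})$ and lower-bounds the quadratic form); both deliver \eqref{eq:cons} from the same ingredients, and your ISE bound via orthonormality is exactly the paper's.

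The genuine gap is in step (c) of part (ii), in how you handle the non-centredness. You propose to subtract $\EE\,\boldsymbol{e}_{K_n}(\bX;\btheta^\star_{K_n})$ and prove a CLT for the centred truncated score. But the limiting variance in the theorem, $s_n^2(r)$ built from $\mathbf{A}_{K_n}(\btheta^\star)+\mathbf{B}_{K_n}(\btheta^\star)$, is $\Var[\boldsymbol{e}_{K_n}(\bX;\btheta^\star)]$ --- the variance of the score at the \emph{untruncated} parameter, where GNZ gives exact unbiasedness. Your centred statistic has variance $\Var[\boldsymbol{e}_{K_n}(\bX;\btheta^\star_{K_n})]$, which differs from it by terms involving $\Var[\boldsymbol{\delta}_{K_n}(\bX;\btheta^\star)]$ and a cross-covariance; removing the mean alone does not make these disappear, so you would still need to show this variance mismatch is negligible relative to $s_n^2(r)\gtrsim|W_n|$. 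The paper avoids the issue by decomposing $\boldsymbol{e}_{K_n}(\bX;\btheta^\star_{K_n})=\boldsymbol{e}_{K_n}(\bX;\btheta^\star)+\boldsymbol{\delta}_{K_n}(\bX;\btheta^\star)$, applying the ready-made CLT of Lemma~\ref{lemma2}(iv) (imported from the innovation-statistics CLT of the cited reference, not re-proved by blocking) to the exactly centred first term, and controlling \emph{both} the mean and the variance of $\boldsymbol{\delta}_{K_n}$ in Proposition~\ref{PROP:BIAS}. Relatedly, your reading of the condition $K_n^{2(m+m\vee 1+2)/3}|W_n|^{-1}\to 0$ as "the sharp Lyapunov threshold" is off: in the paper this condition does not come from the CLT at all but from requiring $s_n^{-1}(r)\sqrt{K_n}\,x_n\to 0$, where the term $K_n^{m+m\vee 1+3/2}|W_n|^{-1}$ in $x_n$ originates in the Lipschitz bound $\|\mathbf{A}_{K_n}(\bX;\btheta)-\mathbf{A}_{K_n}(\bX;\btheta')\|=O_{\mathrm P}(|W_n|K_n^{m+3/2}\|\btheta-\btheta'\|)$ of Lemma~\ref{lemma2}(v) applied with $\|\btheta-\btheta'\|=O_{\mathrm P}(\|\hat\btheta_{K_n}-\btheta^\star_{K_n}\|)$. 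Finally, note that the paper uses $\|\widetilde{\boldsymbol{\varphi}}_{K_n}(r-\delta)\|=O(\sqrt{K_n})$ in this step; your more conservative bound $O(K_n^{m+1/2})$ would inflate the remainder by a factor $K_n^{m}$ and fail to recover the stated rate conditions when $m>0$.
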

Theorem~\ref{THM:CONST} sheds the light on the trade-off which must be found on the sequences $K_n$ and $|W_n|$ and the assumption on the underlying model through the rate of convergence of $a_{K_n}\to 0$ which implicitly imposes some conditions on $g$. However, the smoothness of $g$ cannot be directly related to the decrease of the partial sum $\sum_{k>K_n} k^m|\theta_k^\star|$. To discuss more this, let us assume that $\theta^\star_k= O(k^{-1-\varepsilon-\gamma})$ for some $\varepsilon>0$ and $\gamma>m$ which implies that $a_{K_n}= o(K_n^{m-\gamma})$. Let us also introduce the space $\mathcal H^s:=\{f\in L^2_w([\delta,R+\delta]): \sum_{k} (1+k)^{2s} |{\theta^\star_k}|^2 <\infty\}$, which for the the cosine basis, corresponds to the Sobolev space with regularity $s$. Note that $\theta_k^\star=O(k^{-1-\varepsilon-\gamma})$ implies that $g\in \mathcal H^{\gamma+1/2}$ (actually to any $\mathcal H^s$ for any $s\le \gamma+1/2$). Using this ``regularity'' space, assuming that $m\le 1$ and focusing on the convergence of the ISE to simplify the discussion, we see that the ISE tends to 0 if $\gamma \ge m+1/2$ which would imply that $g\in \mathcal H^{m+1}$. On the much simpler problem of orthogonal density estimators with iid random variables, \cite{hall1986rate} for instance, shows that the MISE is consistent as soon as the true density is squared integrable. We attribute our more restrictive assumptions to the strong nonlinearity of the current problem (we cannot estimate $\theta_k^\star$ without estimating all other coefficients) and to the complex  dependence characteristic of the Gibbs point process.\\ 
To continue the discussion, if one assumes in addition that $K_n=|W_n|^\alpha$ (with $\alpha>0$), the ISE tends to zero if $0<\alpha<1$ with optimal rate proportional to $|W_n|^{-r}$ with $ r=(\gamma-m+1/2)/(\gamma-m)$.


As already mentioned $\mathbf{A}_{K_n}(\btheta_{K_n}^{\star})$ and $\mathbf{A}_{K_n}(\btheta^{\star}) + \mathbf{B}_{K_n}(\btheta^{\star})$ can be seen as the sensitivity matrix and the variance covariance matrix of the score of the truncated pseudolikelihood. By using ergodic theorem \cite[see][]{nguyen1979ergodic} and the GNZ formula, \cite{coeurjolly2013fast} proposed fast and consistent estimates of such matrices for general Gibbs point processes. Applied to pairwise interaction point processes, the $(j,k)$-th elements $a_{jk} = |W_n|^{-1}(\mathbf{A}_{K_n}(\btheta_{K_n}^{\star}))_{jk}$ (or $|W_n|^{-1}(\mathbf{A}_{K_n}(\btheta^{\star}))_{jk}$) and $b_{jk}=|W_n|^{-1}(\mathbf{B}_{K_n}(\btheta^{\star}))_{jk}$ can respectively be estimated by
\begin{align*}
\hat{a}_{jk} = & |W_n|^{-1} \sum_{u \in \bX \cap W_n} S_j(u,\bX \setminus u) S_k(u,\bX \setminus u) \\
\hat{b}_{jk} = & |W_n|^{-1} {\sum_{u,v \in \bX \cap W_n}}^{\!\!\!\!\!\!\!\!\!\!\!\!\prime} \quad \Bigg \{ S_j(u,\bX \setminus \{u,v\}) S_k(v,\bX \setminus \{u,v\}) \left(\frac{1}{\phi_{\hat{\btheta}_{K_n}}(\|v-u\|)}-1 \right)  \\ 
& + \varphi_j(\|v-u\|-\delta) \varphi_k(\|v-u\|-\delta) \Bigg \}
\end{align*}
where to ease notation, the symbol $\sum^\prime$ means that the sum acts on pairs of distinct points such that $\delta \leq \| v - u\| \leq R+\delta$. These estimates depend only on data points and do not require any integrals discretization which explains their quick implementation within the function \texttt{vcov.ppm} from the \texttt{spatstat R} package. Since $\widetilde{\boldsymbol{\varphi}}_{K_n}(r - \delta)$ depends only on the basis system, estimates of $\boldsymbol \Pi_{K_n}$ and finally $s_n(r)$ follow easily.

By Theorem~\ref{THM:CONST} (ii), an approximate $100(1-\alpha)\%$ confidence interval for $g(r)$ is given by $\hat{g}_n(r;K_n) \mp z_{\alpha/2} \hat{s}_n(r)$ where $r \in (\delta,R+\delta)$ and $z_{\alpha/2}$ is the quantile of order $(1-\alpha/2)$ of a standard normal distribution. A pointwise confidence interval for $\phi(r)$ is easily deduced.

\section{Implementation and simulation study} \label{sec:sim}

\subsection{Full description of the methodology} \label{subsec:imp} ${ }$ \\

Given an orthonormal basis function, $\delta$, $R$ and a number of polynomials $K$, the problem of estimating $g$ (and thus $\phi$) is easily implemented since we bring back the estimation of the coefficients $\theta_0,\dots,\theta_K$ to the estimation of an exponential family Gibbs model. The estimation is done using the pseudolikelihood which is implemented in the \texttt{ppm} function of the \texttt{spatstat} \texttt{R} package. This function takes advantage of the analogy between the pseudolikelihood and the quasi-Poisson regression \cite{baddeley2000practical}.

We consider the estimation of $g$ on $[\delta,R+\delta]$ using an expansion over a basis of functions on $L^2_w([0,R])$, with $\delta \ge 0$ and $R<\infty$. We remind that to encompass cases (a)-(b) of Corollary~\ref{cor:existence} (see Remarks after Corollary~\ref{cor:existence}) we abuse notation naming $\delta\ge 0$ as the hard-core parameter: $\delta=0$ means no hard-core while a positive value means that two points at distance smaller than $\delta$ do not interact. Whatever the value of $\delta$, the parameter $R+\delta$ is related to the range of the GPP. Two points at distance larger than $R+\delta$ interact in the same way than under a homogenous Poisson point process.  If these parameters are unknown, which is most often the case for real datasets, we either assume we have at our disposal a lower-bound and upper-bound for these irregular parameters or we use a standard approach to estimate the hyperparameters $\delta$ and $R$. In this situation, we estimate $\delta$ by $\hat \delta= n/(n+1) d_{\min}$, where $d_{\min}$ is the observed smallest distance between two points. This estimate is the  \texttt{spatstat} version (known to be less biased) of the estimate proposed by~\cite{ripley1978quick}. To estimate $R+\delta$, we can use the profile pseudolikelihood (by setting first a reasonable flexible model) or take the value of $r$ such that the maximal deviation between the empirical Ripley's $K$-function and its expected value under the homogeneous Poisson process is observed. Both approaches are described in \cite[p.~518]{baddeley2015spatial}.  In the simulation study, we have first assumed that those two parameters are known and given by their values. We also show on a smaller example that misspecifying or estimating $\delta$ and $R$  does not deteriorate  the efficiency of the proposed methodology.

Let us now consider the crucial problem of selecting the number of polynomials $K$.
In the context of nonparametric estimation of the pair correlation function, \cite{jalilian2019orthogonal} and \cite{coeurjolly2019second} suggest to select it by minimizing an estimate of the MISE or by maximizing a composite likelihood cross-validation criterion respectively. We were unable to mimic these approaches in the context of the present paper, in particular because we do not have an explicit expression (nor asymptotic equivalent) of the bias and the variance of $\hat{g}(r;K)$. Instead, we propose to adapt an information criterion approach. Information criteria for composite likelihood were first introduced by~\cite{gao2010composite} and were then extended to spatial point processes by~\cite[][]{daniel2018penalized,choiruddin2021information,bainference}. Applied to the truncated pseudolikelihood, this criterion writes
\begin{equation}\label{eq:cAIC}
\mathrm{cAIC}(K,\bX) = - 2 \, \mathrm{LPL}(\bX;\hat{\btheta}_K) + 2 \, \text{trace} (\hat{\mathbf A}_K \hat{\boldsymbol \Pi}_K)
\end{equation}
where $\hat{\mathbf A}_K$ and $\hat{\boldsymbol{\Pi}}_K$ are estimates of $\mathbf{A}_K({\boldsymbol \theta}_{K}^\star)$ and ${\boldsymbol{\Pi}}_K$ given by~\eqref{tauxn}. As already mentioned in the previous section, both estimates  and thus the trace are easily implemented in particular in \texttt{R} and can be obtained using the function \texttt{vcov.ppm} from the \texttt{spatstat} package.

Note that the trace term in~\eqref{eq:cAIC} comes from the fact that the pseudolikelihood is only a composite likelihood and not a likelihood. If we were dealing with an inhomogeneous Poisson point processes ($\phi=1$ with $\beta=\beta(u)$ modelled through say $p$ covariates for instance), the pseudolikelihood would be a likelihood, the trace term $p$ and the criterion $\mathrm{cAIC}$  the standard Akaike's information criterion.

Finally, we suggest to select $K$ as 
\[
    \hat K^{\star}_{\mathrm{cAIC}} = \mathop{\mathrm{argmin}}_{1\le K\le K_{\max}} \mathrm{cAIC}(K,\bX)
\]
yielding a data-driven procedure. Note that we `view' $\hat K^{\star}_{\mathrm{cAIC}}$ as an estimate of $K^{\star}_{\mathrm{cAIC}} = \mathrm{argmin}_K \EE[\mathrm{cAIC}(K,\bX)]$ and we choose $K_{\max} = 15$ in the simulation study and the real datasets analysis. The whole procedure is summarized in Algorithm~\ref{alg:algorithm1}.

\begin{minipage}{\textwidth} 
\renewcommand*\footnoterule{}
\begin{algorithm}[H] 
\caption{Data-driven estimation of the pairwise interaction function $\phi(r)$ via series expansion (when $\delta$ and $R$ are known)}
\label{alg:algorithm1}
\begin{algorithmic}[1]
\Statex
     \State{Assume $\bx$ is a realization of $\text{Gibbs}(\phi(r),W)$ and consider the series expansion of $g=\log \phi$ on the basis of orthonormal polynomials $(\varphi_k)_{k\ge 1}$ }
     \State{Let $K_{\max} \ge 1$}
     \For{$k \gets 1$ to $K_{\max}$}
        \State {$S_k(u,\bx) \gets \sum_{v \in \bx} \varphi_k (\Vert v - u \Vert - \delta), u \in W$}
    \EndFor
    \For{$K \gets 1$ to $K_{\max}$ }
    \State {Form  $\bS_K(u,\bx)\gets (S_1(u,\bx),\dots,S_K(u,\bx))$} 
    \State {Compute $\hat{\btheta}_{K}$ (through the truncated pseudolikelihood) \texttt{fit} $\gets$ \texttt{ppm(}$\bX \sim 1$, interaction = $\bS_K$\texttt{,...)}}
    \State{Compute $\mathrm{LPL}(\bx;\hat{\btheta}_K) \gets$ \texttt{logLik.ppm(fit)} }
    \State{Compute $\hat{\mathbf A}_K \gets $ \texttt{vcov(fit,hessian=TRUE)}}
    \State{Compute $\hat{\boldsymbol \Pi}_K \gets$ \texttt{vcov(fit)}}
    \State{Compute $\mathrm{cAIC}(K,\bx)$ }
    \EndFor
    \State{Compute  $\hat K^\star = \hat K^{\star}_{\mathrm{cAIC}} \gets \mathrm{argmin}_{1\le K\le K_{\max}} \mathrm{cAIC}(K,\bx)$}
    \State{Define $\hat{\boldsymbol{\theta}}_{\hat K^{\star}}=(\hat \theta_1,\dots,\hat \theta_{\hat K^{\star}} ) $}
    \State {
    {Define $\hat{g}(r;\hat K^\star) \gets \sum_{k=1}^{K}\hat{\theta}_{k}\varphi_k(r - \delta) $} ; 
    {$\hat{\phi}(r;\hat K^\star) \gets \exp \hat{g}(r;\hat K^\star)$}
    }
\end{algorithmic}
\end{algorithm}
\end{minipage}

\subsection{Simulation set-up} \label{sim:ortho}

We investigate the performance of the orthogonal series estimators  through simulations of GPPs with pairwise interaction functions $\phi_k$ (referred to as PIF$k$ model) for $k=1,\dots,4$ defined in Section~\ref{sec:gpps} on the spatial domains $W_\ell=[0,\ell]^2$ for $\ell=1,2,3$. More precisely, we generate $500$ simulations from GPPs with true conditional intensity $\lambda(u,\bx) = \beta \prod_{v \in \bx} \phi_k (\Vert v - u \Vert)$ for $k=1,\dots,4$. It does make sense to set $\beta$ so that the model has a prescribed value of the average number of points per unit volume. The intensity of a GPP is not explicit in terms of its parameters but recently \cite{coeurjolly2018intensity} proposed an approximation using determinantal point processes. This approximation was shown to outperform the (best alternative) Poisson-saddlepoint approximation  proposed by~\cite{baddeley2012fast}, even for clustered patterns. We use this approximation to set the activity parameter such that the mean number of points in $W_1$ is $200$, for the four models. The values of $\beta$ and $\theta_0^\star=\log \beta$ as well as the average number of points based on the 500 simulations is summarized in Table~\ref{tab:nb}.

\begin{table}[!h]
\begin{center}
\begin{tabular}{rrrrrr}
\hline
& \multicolumn{2}{c}{Value of} & \multicolumn{3}{c}{Average number of points in}\\
Model & $\beta$ & $\theta_0^\star$ &$W_1=[0,1]^2$& $W_2=[0,2]^2$ & $W_3=[0,3]^2$\\
 \hline
PIF1 &1827 & 7.51  &183& 731& 1643\\
PIF2 &1240 & 7.12  &190& 759& 1708\\
PIF3 &119  & 4.78  &182&717&1562\\
PIF4 &1316 & 7.18 &217&868&1951 \\
\hline
\end{tabular}	
\end{center}
\caption{\label{tab:nb} Values of the activity parameter $\beta$ and $\theta_0^\star=\log \beta$ such that the approximation of the intensity is 200 on $[0,1]^2$, and average number of points on $W_\ell=[0,\ell]^2$ based on 500 simulations of PIF$k$ for $k=1,\dots,4$.}
\end{table}

For each simulated pattern, we first estimate $\btheta_K$ for $K=1,2,\ldots,K_{\max}$, with $K_{\max}=15$ using the \textsf{ppm} function of the \textsf{spatstat} package with the border correction. Note that this correction consists in replacing the observation domain in the estimation procedure by $W_\ell \ominus (R+\delta)$, that is $W_\ell$ eroded by $R+\delta$, for $\ell=1,2,3$. The number of dummy points $\textsf{nd}^2$, used to discretize the integral in~\eqref{lpl}, is set to $64n$, where $n$ is the observed number of points. Then, we apply Algorithm~\ref{alg:algorithm1} to select the value of $K$ and estimate $g_k$ and $\phi_k$ (for $k=1,\dots,4$) on $[\delta,R+\delta]$. We remind that this interval is $[0,0.08]$ for PIF1 and PIF2 and $[0.01,0.08]$ for PIF3 and PIF4 (which have a hard core $\delta=0.01$). We proceed in this way for the three basis of functions described in Section~\ref{subsec:orth}, that is the cosine, the Fourier-Bessel and the Haar bases. Section~\ref{sim:eip} investigates a simulation where $\delta$ and $R$ are unknown.

For each model and each observation window, we store the 500 values of $\hat K^{\star}_{\mathrm{cAIC}}$. We also evaluate via Monte-Carlo $K^\star_{\mathrm{MISE}}$ which minimizes the empirical MISE given by the expected value of~\eqref{eq:ise} and  $K^\star_{\mathrm{cAIC}}$ which minimizes the Monte-Carlo approximation of ${\EE} [ \mathrm{cAIC}(K,\bX) ]$. In addition, we compute the following relative losses
\begin{align*}
\widehat{\mathrm{Loss}}_{\mathrm{MISE}}&=500^{-1}\sum_{i} 
\frac{\int \{\hat g^{(i)}(r;K^{\star}_{\mathrm{MISE}})-g(r)\}^2 \mathrm{d} r}{\int 
g(r)^2 \mathrm{d} r} \\	
\widehat{\mathrm{Loss}}_{\mathrm{cAIC}}&=500^{-1}\sum_{i} 
\frac{\int \{\hat g^{(i)}(r;\hat K^{\star,(i)}_{\mathrm{cAIC}})-g(r)\}^2 \mathrm{d} r }{\int 
g(r)^2 \mathrm{d} r} 
\end{align*} 
which are Monte-Carlo estimates of the  relative $L^2$-error when we choose for each replication $i$, $K^{\star}_{\mathrm{MISE}}$ or the data-driven estimate $\hat{K}^{\star,(i)}_{\mathrm{cAIC}}$. We emphasize that $\widehat{\mathrm{Loss}}_{\mathrm{MISE}}$ exactly corresponds to the Monte-Carlo estimate of the relative MISE evaluated at $K^\star_{\mathrm{MISE}}$ 
for the cosine and Haar bases for which $w(r)=1$. We choose to define these criterions (and thus change the notation) so that we are able to compare the results between different bases, in particular defined with different weight functions. Finally, for the data-driven selected smoothing parameter, we also report the relative Monte-Carlo mean squared error of the parameter $\theta_0^\star=\log\beta$. Table~\ref{table:smoothing} summarizes our findings regarding the quality of the selection of the optimal number of orthonormal polynomials, while Figure~\ref{fig:envelopes.cAIC.pifs} and Figure~\ref{fig:envelopes.cAIC.gs} in the Supplementary Material depict the Monte Carlo means and 95\% confidence envelopes of estimates of $\phi$ and $g$ (based on $\hat K^{\star}_{\mathrm{cAIC}}$).

\subsection{Simulation results}

We first comment on Table~\ref{table:smoothing}. For all types of PIPPs, the integers $K^{\star}_{\mathrm{MISE}}$ and $K^{\star}_{\mathrm{cAIC}}$ increase most of the time as the spatial domain enlarges and the number of points increases.  The cutoffs $K^{\star}_{\mathrm{MISE}}$ and $K^{\star}_{\mathrm{cAIC}}$ are almost identical for all models under the three orthonormal bases. For instance, $K^{\star}_{\mathrm{MISE}}$ and $K^{\star}_{\mathrm{cAIC}}$ are the same for PIF1 and PIF3 models under the cosine and the Haar systems. The mean and standard deviation of 500 values of $\hat K^{\star}_{\mathrm{cAIC}}$ are also given in Table~\ref{table:smoothing}. As explained in the previous section, we remind that this column is based on data-driven estimate of the smoothing parameter, while the columns related to $K^{\star}_{\mathrm{MISE}}$ and $K^{\star}_{\mathrm{cAIC}}$ can only be computed using Monte-Carlo replications. For all four models and the three orthonormal bases, one observes that the mean of $\hat K^{\star}_{\mathrm{cAIC}}$ increases when the amount of data increases. Regarding its standard deviation, we observe that the variability in the values of $\hat K^{\star}_{\mathrm{cAIC}}$ is moderate in particular for the observation domains $W_2$ and $W_3$. The increasing of the different $K^\star$ is in agreement with the theoretical requirement  that $K_n\to \infty$ with the volume of the observation domain. It is also satisfactory that the means of the $\hat K^\star_{\mathrm{cAIC}}$ are quite close to $K^\star_{\mathrm{MISE}}$. From the two penultimate columns, we can first observe that the relative $L^2$-errors (empirical loss functions) get  smaller with the amount of data. Even if we did not seek at minimizing the MISE, it is interesting to observe that the second last two columns are not too far from each other, except maybe for PIF1 in $W_1$. From these different comments, we come to the conclusion that the use of the composite AIC is a pertinent data-driven procedure to select the smoothing parameter $K$.

Finally, if we specifically focus on the penultimate column which allows us to compare the different bases, one observes that the Fourier-Bessel basis (resp. Haar basis) seems the most appropriate for PIF1 (resp. PIF2) except on W1. The performances of the three bases functions are quite similar for PIF3 while the Haar basis provides worse results for PIF4 than the two other ones.  

Figure~\ref{fig:envelopes.cAIC.pifs} and Figure~\ref{fig:envelopes.cAIC.gs} in the Supplementary Material show the result of this procedure on estimates of $\phi$ and $g$ respectively using the three orthonormal bases. The comments below apply to estimates of $\phi$ and $g$. Overall, these results indeed show the efficiency of the procedure. 
Whatever the nature of the pairwise interaction function (repulsive, attractive or a mixture of repulsion and attraction) and whatever the orthonormal basis function used, on average, the true function ($\phi$ or $g$) is well recovered at least for domains $W_2$ and $W_3$. We also observe that the widths  of envelopes  are get smaller as the amount of data increases, which intrinsically means that the variance of the estimates tends to 0 as $|W_\ell|$ increases. We may mention that the variability at short distances for estimates of models PIF2 and PIF3 is very large even for the domain $W_3$ making the pattern indistinguishable from a Poisson pattern.

The approximation of $\phi_1$ (and $g_1$) is a success story for the cosine and the Fourier-Bessel bases. Even the approximation in $W_1$ gives a fair visualization of $\phi_1$, and the approximations in $W_2$ and $W_3$ provide an almost perfect fit. The Haar basis does not approximate $\phi_1$ as good as the cosine and the Fourier-Bessel bases. In particular, the estimates tend to be negatively biased when $r$ gets close to $\tilde R=0.08$. The interaction function $\phi_2$ is a piecewise constant function. Thus, there is no surprise that $\phi_2$ (and $g_2$) is perfectly fitted by the Haar system-after all. Note that $\phi_2$ and $g_2$ are also fairly fitted by the cosine and the Fourier-Bessel bases. The purely attractive interaction function $\phi_3$ (and $g_3$) looks much more well-fitted with the cosine and Fourier-Bessel bases than the Haar one, especially when $r$ is close to $\delta=0.01$ or $\tilde R=R+\delta=0.08$. Finally, the interaction functions $\phi_4$ and $g_4$ are fairly fitted by the three orthonormal systems. 

In terms of variability, these figures bring a new information compared to Table~\ref{table:smoothing}. For the four pairwise interaction functions, estimates of $\phi$ and $g$ using the Fourier-Bessel basis tend to have a larger (resp. smaller) variance than the ones using the cosine and Haar bases when $r$ is close to $\delta$ (resp. close to $R+\delta$). These behaviors are clearly accountable for the weight function ($r^{d-1}$ for the Fourier-Bessel basis and 1 for the two other ones) but we do not have a theoretical explanation for this.

From this simulation study, we  recommend the use of the Fourier-Bessel basis (resp. the Haar basis) if one suspects the pairwise interaction to be smooth (to be piecewise constant). 

To supplement our asymptotic normality result in Section~\ref{sec:result}, we consider the distribution of $z_n(r):=s_n^{-1}(r) \left \{ \hat{g}_n(r;K^{\star}_{\mathrm{cAIC}}) - g(r) \right \}$ for $r=0.035$ and $r=0.05$ in case of PIF$k$ model, $k=1,\dots,4$ and using the three orthonormal bases. Results for spatial domain $W_\ell$, $\ell=1,2,3$ are summarized in Figure~\ref{fig:proof.theorem21} and Figure~\ref{fig:proof.theorem22} in the Supplementary Material. Overall, one can observe that the distributions of $z_n(r)$ get closer to the one of a standard Gaussian random variable as $|W|$ increases.

\subsection{An example when $\delta$ and $R$ are estimated or misspecified} 
\label{sim:eip}

The purpose of this section is to show that misspecifying or estimating $\delta$ and $R$ does not deteriorate the efficiency of the estimators. To this end, we consider a simulation simpler study  than that of Section~\ref{sim:ortho} and generate $500$ realizations from GPPs with pairwise interaction functions $\phi_k$, $k=1,\dots,4$ on the spatial domain $W=[0,2]^2$. The simulation parameters are unchanged: in particular $\delta=0$ and $R=0.08$ for PIF1 and PIF2 models while $\delta=0.01$ and $R=0.07$ for PIF3 and PIF4 models. We consider only the Fourier-Bessel basis. Three scenarii are considered:
\begin{enumerate}[ \textsf{Scenario}~1.]
\item Same values as in the simulation set-up, that is $\delta=0$ and $R=0.08$ for the PIF1 and PIF2 models while $\delta=0.01$ and $R=0.07$ for the PIF3 and PIF4 models. \label{orth:scen1} 
\item $\delta$ and $R+\delta$ are replaced by their estimated values. We estimate $\delta$ ($\text{if} >0$) by $\hat \delta= n/(n+1) d_{\min}$ where $n$ is the number of points and $d_{\min}$ is the smallest observed distance between two points. To estimate $R+\delta$, we use the $K$-function i.e. we estimate the irregular parameter $R+\delta$ by the distance $r$ of maximal squared deviation of the border-corrected estimate of the Ripley's $K$-function. \label{orth:scen2} 
\item $R+\delta$ is replaced by an upper bound $R_{\max}=0.12$. 
\end{enumerate}

Results are summarized in Figure~\ref{fig:scenario.pifs} and Figure~\ref{fig:scenario.gs} in the Supplementary Material. For all type of PIPPs, it can be seen that estimations of $\phi$ and $g$ in  \textsf{Scenarii}~$2$ and~$3$ perform favorably compared to  \textsf{Scenario}~$1$.  As seen from the first two columns and PIF3-PIF4 models, the estimation of the hard core parameter has no influence on the estimates of $\phi$ and $g$ close to $\delta$.  The most interesting observation is on the behaviors of estimates of $\phi$ and $g$ for \textsf{Scenarii} $2$-$3$ when $r>R+\delta$. For both scenarii, we indeed observe that estimates of $\phi$ (resp. $g$) are very close to 1 (resp. 0), as expected. The counterpart is that the envelopes tend to be wider for \textsf{Scenarii} $2$-$3$ compared to the ones of the benchmark \textsf{Scenario}~$1$. However, this increase of variability is very moderate. Overall, this simulation study shows that the estimation results are very stable when the irregular parameters are estimated or misspecified.

\section{Conclusion} \label{sec:conclusion}  

In this paper, we develop an orthogonal series estimation of the log of the pairwise interaction function. The coefficients involved in the expansion of the function are obtained by maximizing a modified version of the pseudolikelihood function of the GPP.  We propose to use the composite Akaike information criterion for the selection of the  smoothing parameter. The implementation of the estimation procedure is quite simple thanks to an analogy with the pseudolikelihood for exponential family models. Our large simulation study confirms our  results and shows the practical efficiency for large classes of pairwise interaction functions and for different orthonormal basis functions. These simulation results also confirm the use of the composite AIC criterion we suggest as a data-driven procedure to select the number of orthonormal polynomials used in the series expansion. Finally, we apply our methodology to three real datasets. The results provide a fine description of  the interaction between the points for each dataset.

The use of orthonormal bases seems to be crucial to verify Lemmas~\ref{lemma1}-\ref{lemma2} which essentially show that the sensitivity matrix and variance covariance matrix of the score function are definite positive for $n$ large enough. Even if several orthonormal bases could be used in practice, we have not explored optimality criteria for the choice of basis system. In the same vein, we do not know if our results remain valid for expansions not based on orthonormal basis. Investigating such an extension is an interesting  perspective. There are a few other directions that are worth exploring in the future. First, it would be interesting from a practical point of view to extend the method to pairwise interaction point process with conditional intensity of the form $\lambda(u,\bx) = \beta(u) \prod_{v \in \bx} \phi (\Vert v - u \Vert)$ where $\beta(u)$ is modelled through say $p$ spatial covariates for instance or is also non parametrically estimated. Second, as an alternative of the data-driven selection of the tuning parameter $K$, we could consider a regularized version of the truncated pseudolikelihood (with a penalty term such as lasso, fused lasso, etc). Third, the methodology and the results are restricted to finite-range models. Relaxing this assumption seems quite straightforward from a methodological point of view. We could replace $\tilde R$ by a sequence $\tilde R_n$ which increases with the amount of data. Handling this extension from a theoretical point of view is however definitely challenging.   

\section*{Acknowledgements}

The authors are grateful to the editor, associate editor and reviewers for their suggestions and comments which led to a significant improved version of the manuscript. The authors also would like to sincerely thank Abdollah Jalilian for sharing their code implementing the Fourier-Bessel basis and Juha Heikkinen for fruitful discussions on the Bayesian smoothing approach developed in~\cite{heikkinen1999bayesian}. 

\bibliographystyle{imsart-number}
\bibliography{OrthogonalGPP}

\setlength{\tabcolsep}{1pt}
\renewcommand{\arraystretch}{1.2}
\begin{table}[p]
\caption{Smoothing parameters $K^{\star}_{\mathrm{MISE}}$ and $K^{\star}_{\mathrm{cAIC}}$, Monte Carlo means and standard deviations of  $\widehat{K}^{\star}_{\mathrm{cAIC}}$ based on $500$ replications of PIPP with interaction functions $\phi_l$, $l=1,\ldots,4$. The values (in $10^{-3}$) of the mean integrated squared error with $w=1$, that is ($\widehat{\mathrm{Loss}}$) evaluated at the smoothing parameters $K^{\star}_{\mathrm{MISE}}$ and $\widehat{K}^{\star}_{\mathrm{cAIC}}$ are provided in columns 7-8 respectively. The last column $\widehat{\mathrm{rMSE}}_{\mathrm{cAIC}}(\theta_0^\star)$ (in $10^{-4}$) reports the corresponding relative Monte-Carlo mean squared error of estimates of $\theta_0^\star=\log\beta$.}
\label{table:smoothing} 
\centering
\begin{tabular}{ ccc || c@{\hskip 0.2cm}c@{\hskip 0.2cm}c@{\hskip 0.2cm}c@{\hskip 0.2cm}c@{\hskip 0.2cm}c }
\hline
\hline 
 \multicolumn{1}{c}{Function} &\multicolumn{1}{c}{Basis} & \multicolumn{1}{c}{Spatial domain \;\;}    & $K^{\star}_{\mathrm{MISE}}$ & $K^{\star}_{\mathrm{cAIC}}$ & $\widebar{\widehat K}^{\star}_{\mathrm{cAIC}}\left(\hat{\sigma}_{{\widehat K}^{\star}_{\mathrm{cAIC}}}\right)$ 
 & $\widehat{\mathrm{Loss}}_{\mathrm{MISE}}$
 & $\widehat{\mathrm{Loss}}_{\mathrm{cAIC}}$ & 
 $\widehat{\mathrm{rMSE}}_{\mathrm{cAIC}}(\theta_0^\star)$
 \vspace{0.1cm} \\ 
 \hline
  \hline
 \multirow{9}{*}{PIF1} & \multirow{3}{*}{Cosine} & $W_1$    & 2 & 2 & 2.7(1.6) & 38.5 & 780.5 & 334.4\\ 
 &  & $W_2$    & 4 & 4 &  3.7(2.2) & 13.8 & 105.5 & 12.3\\ 
  & & $W_3$    & 4 & 4 & 4.5(2.2) & 6.4 & 15.3 & 5.0 \vspace{0.1cm} \\ 
  
& \multirow{3}{*}{FB} & $W_1$   & 2& 2 & 2.4(1.7) & 38.2 & 733.9 & 42.5\\ 
 &  & $W_2$  & 3 & 2& 2.8(1.7) & 13.4 & 29.7 & 10.2\\ 
  & & $W_3$  & 4 & 2 & 3.2(2) & 7.8 & 16.1 & 4.5 \vspace{0.1cm}\\ 
 
  & \multirow{3}{*}{Haar} & $W_1$   & 2& 2 & 4.2(1.5) & 159.9 & 90.8 & 43.3\\ 
  & & $W_2$  & 4 & 4 & 8.5(2.4) & 54.2 & 53.4 & 12.3 \\ 
  & & $W_3$   & 8 & 8 & 11.4(3.1) & 33.9 & 40.2 & 5.1 \\ 
  
 \hline
  \multirow{9}{*}{PIF2} & \multirow{3}{*}{Cosine} & $W_1$ & 5 & 5 & 4.8(2.4)& 151.9 & 211.5 & 48.8\\ 
   & & $W_2$ &  5& 10 & 9.9(2.7) & 55.0 & 71.7 & 8.7\\ 
   & & $W_3$  &  11 & 15& 12.2(2.4) & 43.1 & 47.1 & 3.8 \vspace{0.1cm}\\
   
  & \multirow{3}{*}{FB} & $W_1$ & 4& 4 &4.1(2.3)& 156.5 & 263.0 & 41.5\\ 
   & & $W_2$ & 9& 8& 9(2.5) & 65.1 & 74.7 & 8.6\\ 
  & & $W_3$  &  14 &15&11.4(2.8)& 44.9 & 50.4 & 4.6 \vspace{0.1cm}\\ 
  
  & \multirow{3}{*}{Haar} & $W_1$ & 2 & 2 & 3.7(1.7) & 134.8 & 138.7 & 46.0\\ 
   & & $W_2$ & 5 & 5 & 5.7(1.6) & 23.4 & 33.3 & 8.6\\ 
  & & $W_3$  & 5 & 5 & 6.3(2.2) & 12.6 & 20.8 & 3.8\\ 
   
   \hline
  \multirow{9}{*}{PIF3} & \multirow{3}{*}{Cosine} & $W_1$ & 1 & 1 & 1.5(1.2)& 329.3 & 656.7 & 18.8\\ 
   & & $W_2$ & 1& 1 & 2.7(2.3) & 199.3 & 341.9 & 3.3\\ 
   & & $W_3$  & 4 & 4& 4.1(2.6)& 138.4 & 220.3 & 2.4 \vspace{0.1cm}\\
   
  & \multirow{3}{*}{FB} & $W_1$ &  1& 1 & 1.7(1.1) & 607.0 & 998.6 & 19.6\\ 
   & & $W_2$ & 3& 3& 3.1(1.8) & 221.1 & 364.8 & 3.4\\ 
  & & $W_3$  & 3 & 3 & 4(2.0)& 114.6 & 193.2 & 2.2 \vspace{0.1cm}\\ 
  
  & \multirow{3}{*}{Haar} & $W_1$ & 1 & 1 & 1.4(1.2) & 329.3 & 657.0 & 18.9\\ 
  & & $W_2$ & 1 & 1& 2.1(2.1) & 199.3 & 335.5 & 3.3\\ 
  & & $W_3$  & 1 & 1 & 4(3.4) & 186.4 & 292.7 & 2.5\\ 
   \hline
   \multirow{9}{*}{PIF4} & \multirow{3}{*}{Cosine} & $W_1$ & 4 & 4 & 4.5(1.2)& 67.7 & 89.6 & 32.0\\ 
   & & $W_2$ &  5& 5 & 5.4(1.8) & 20.7 & 28.9 & 23.8\\ 
   & & $W_3$  &  6 & 6& 6.1(2.0)  & 17.3 & 19.3 & 22.2 \vspace{0.1cm}\\
   
  & \multirow{3}{*}{FB} & $W_1$ &  3& 2 & 3.4(1.8) & 62.8 & 96.6 & 28.8\\ 
   & & $W_2$ & 5& 5& 5.2(1.5) & 24.1 & 28.7 & 22.5\\ 
  & & $W_3$  & 5 & 5& 5.7(1.7)& 16.5 & 18.9 & 21.3 \vspace{0.1cm} \\ 
  
  & \multirow{3}{*}{Haar} & $W_1$ & 8 & 8 & 7.2(2.0) & 176.1 & 194.5 & 32.2 \\ 
  & & $W_2$ & 8 &8 & 11(2.9) & 75.7 & 82.6 & 23.8 \\ 
  & & $W_3$ & 15 & 15 & 14.2(1.7) & 57.1 & 58.4 & 22.4\\ 
  \hline
 \end{tabular}
\end{table}

\begin{figure}[p]
\begin{center}
\renewcommand{\arraystretch}{0}
\includegraphics[width=1\textwidth]{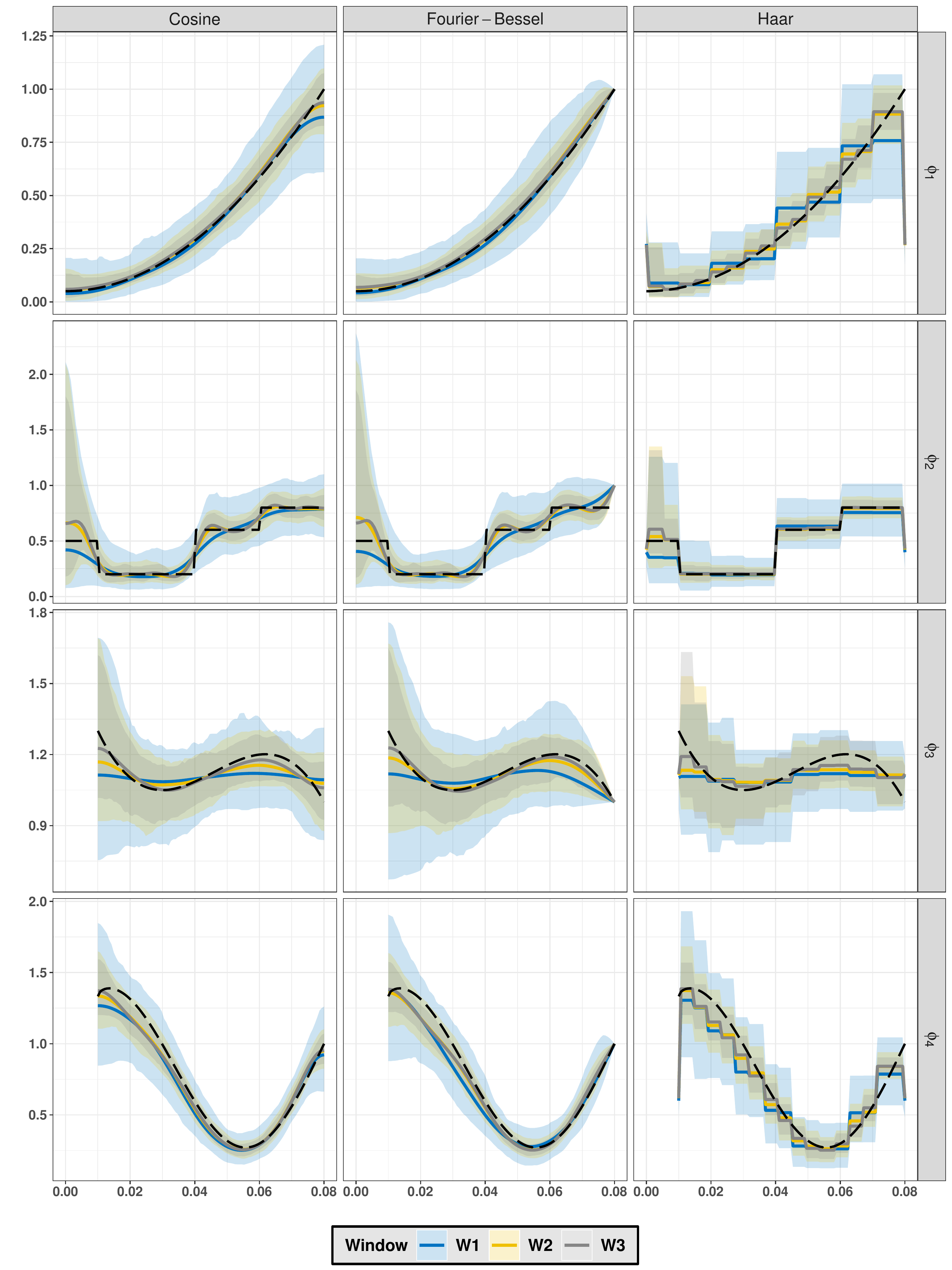} 
\caption{For $l=1,\ldots,4$, theoretical pairwise interaction functions $\phi_l$ (dashed black curves), Monte Carlo means of $\hat{\phi}_l(\cdot,\hat K_{\mathrm{cAIC}}^{\star})$ in $W_j$ ($j=1,2,3$). Envelopes correspond to 95\% Monte-Carlo pointwise confidence intervals.}
\label{fig:envelopes.cAIC.pifs}
\end{center}
\end{figure}

\begin{figure}[p]
\begin{center}
\renewcommand{\arraystretch}{0}
\includegraphics[width=1\textwidth]{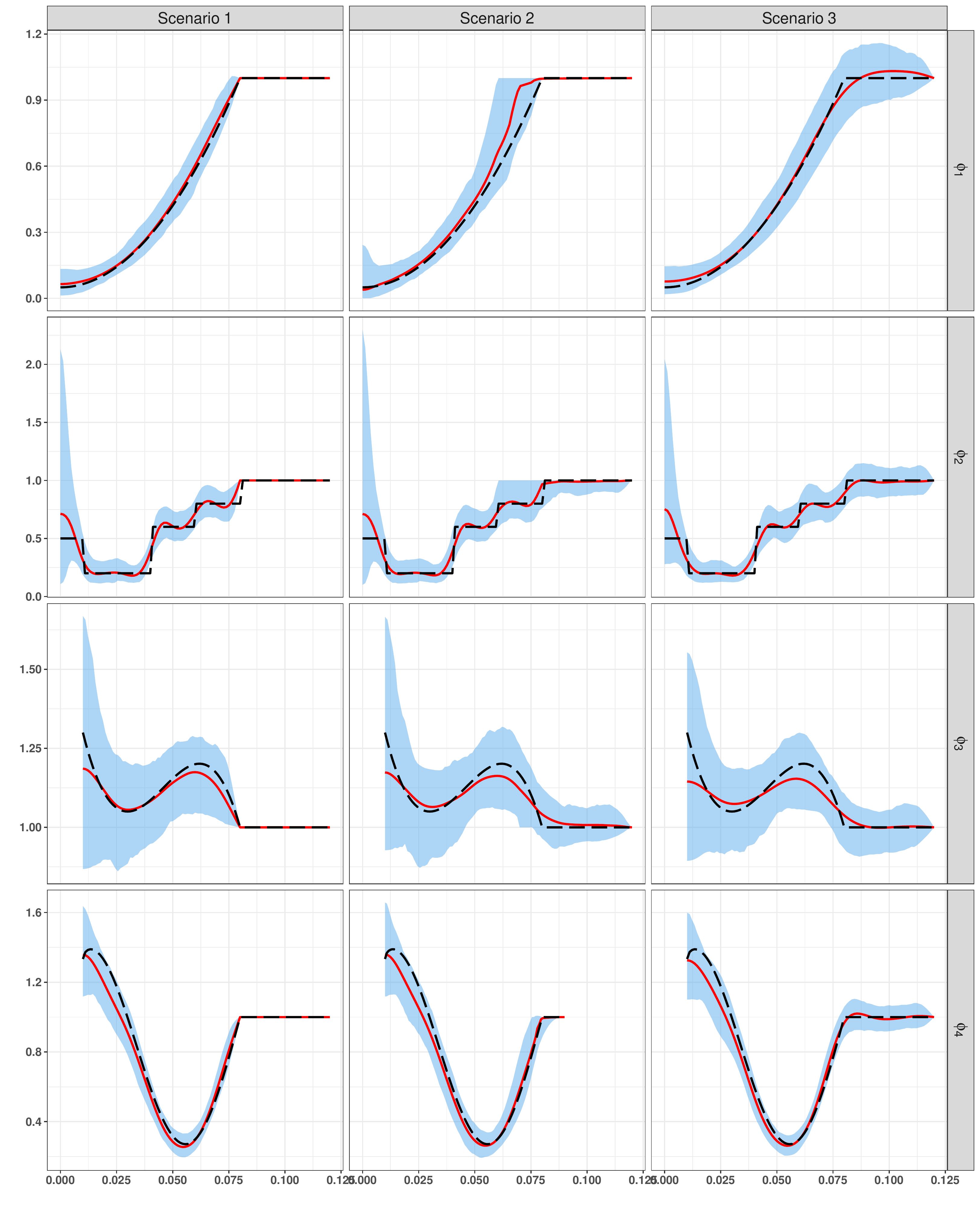} 
\caption{For $l=1,\ldots,4$, theoretical pairwise interaction functions $\phi_l$ (dashed black curves), Monte Carlo means of $\hat{\phi}_l(\cdot,\hat K_{\mathrm{cAIC}}^{\star})$ (red curves) using the Fourier-Bessel basis in $W_2$ when $\delta$ and $R$ are known (first column), estimated (second column), or when $R+\delta$ is set to $R_{\max}=0.12$ (third column). Envelopes correspond to 95\% Monte-Carlo pointwise confidence intervals.}
\label{fig:scenario.pifs}
\end{center}
\end{figure}






\newpage

\centerline{\sc \large SUPPLEMENTARY MATERIAL}
\appendix

\section{Application to real datasets} \label{sec:app} ${}$
 
To illustrate the  procedure on real datasets, we consider point patterns of locations of ``light-on''  displaced amacrine cells in the retina of a rabbit ($152$ points observed in $[0,1060]\times[0,662] \;(\mu m^2)$), longleaf pine trees ($584$ points observed in $[0,200]\times[0,200] (m^2)$) and electrical breakdown spots in a microelectronic capacitor ($964$ points observed on a circular electrode enclosed in the rectangle $[-282,282]\times[-282,282] \;(\mu m^2)$). These datasets are accessible from \texttt{R} under the \texttt{spatstat} package. Hereafter, we refer respectively to these datasets as $D_1$, $D_2$ and $D_3$. Note that in our analysis of $D_2$, we use the unmarked point pattern i.e. the three types of points are now of a single type. We omit the duplicated points of $D_3$. The estimates of $\delta$ are $21$, $0.2$ and $1.1$ respectively. These values give upper bounds $R+\delta$: $120$, $20$ and $15$ for $D_1$, $D_2$ and $D_3$. We apply Algorithm~\ref{alg:algorithm1} with the Fourier-Bessel basis. The  cut-offs $\hat{K}^{\star}=\hat K^{\star}_{\mathrm{cAIC}} $ respectively equal to $2$, $6$ and $3$ for the three point patterns and we use the result in Theorem~\ref{THM:CONST}(iii) to determine the $95\%$ pointwise confidence envelopes for the pairwise interaction function $\phi$ and its logarithm $g$. The computational times (with a single cluster) for finding the above cut-offs are respectively $108$ sec, $1560$ sec and $1080$ sec which can be significantly reduced by considering more clusters. For instance, with Compute Canada servers, it takes a few seconds to find these values. We have also looked at the computational times in a single cluster for the estimation of the coefficients (so of $g$ and $\phi$ functions) and the variance covariance matrices; these are respectively $16$ sec, $104$ sec and $34$ sec for $D_1$, $D_2$ and $D_3$. Furthermore, we include estimates of the summary function L together with envelopes from the fitted pairwise interaction models.  

Results are reported in Figure \ref{fig:app.cAIC}. For the dataset $D_1$, the estimate of the pairwise interaction function is below $1$ for all distances $r$, indicating repulsion as suggested by the representation of $D_1$. This dataset was also analyzed by~\cite{heikkinen1999bayesian} and~\cite{berthelsen2003likelihood} using Bayesian approaches. The posterior mode distribution obtained by~\cite{heikkinen1999bayesian} can be found in their Figure~3, while the posterior mean distribution and its 95\% credibility interval obtained by~\cite{berthelsen2003likelihood} can be found in their Figure~8. Our nonparametric estimate is in agreement with the estimates provided by both Bayesian approaches: the estimate is small or zero in our case when $r\le 21$, increases and reaches values in $[0.9,1]$ when $r\ge 100$. The lower envelope of the orthogonal series estimate is also quite similar to the corresponding one obtained by~\cite{berthelsen2003likelihood}. Our upper envelope (truncated to 1 for a better comparison with previous works) is however larger in particular when $r\le 60$. The pointwise confidence envelopes we propose are based on the asymptotic normality and should be interpreted with caution for this dataset which has only 152 points. Overall, the results we obtain are very satisfactory. 
It is worth repeating the advantages of our nonparametric approach when we compare it to Bayesian approaches: stability and speed of the algorithm (a few seconds to get the estimate and envelopes for this dataset $D_1$), the absence of tuning parameters (except the smoothing parameter $K$ which is estimated using a data-driven procedure), the framework that the number of points is indeed random (which is considered as fixed by~\cite{heikkinen1999bayesian}) and the adaptivity of the method to estimate any type of pairwise interaction function (not restricted to increasing bounded by~1 pairwise interaction functions as in \cite{berthelsen2003likelihood}).

Regarding $D_2$, the estimation of the pairwise interaction function suggests clustering at all scales which is in perfect concordance with the behavior of the points of $D_2$. \cite{wong2021poles} found the same result in their analysis of $D_2$ using the pair correlation function. For $D_3$, we observe repulsive clusters that is at some distances (between approximatively $7.13 \, \mu m$ and $12 \, \mu m$) the point pattern shows a moderate degree of clustering while at other distances it displays some degree of spatial regularity. This result is in agreement with the configuration of the points of $D_3$. 

As for the envelopes of the estimated functions $\hat{\phi}(r)$ and $\hat{g}(r)$, they are tighter for $D_3$ which is expected according to the precision of the estimation in an asymptotic framework. Notice that the scenario in $D_3$ is more suitable to an asymptotic setting than those in $D_1$ and $D_2$. 

To see how well the fitted pairwise interaction models fit to the data, we perform global envelopes for composite hypotheses~\cite{myllymaki2017global} with the summary function L using the \texttt{GET.composite} function of the \texttt{GET} \texttt{R} \texttt{package}~\cite{myllymaki2019get}. We generated $s=499$ replications of the fitted model and for each of these simulations $s_2=199$ refitted replications were used, so in total $10^5$ simulations. The smoothing parameter was kept fixed and we used the studentized envelope test. 

Figure~\ref{fig:app.cAIC} (fourth row) show the test results for the three datasets. For $D_2$ and $D_3$, one can see that the fitted pairwise interaction models are not rejected at the significance level $\alpha=5\%$. As for $D_1$, the test result suggests a rejection of the fitted pairwise interaction point process at the significance level $\alpha=5\%$: this might be explained by the fact that we do not have enough points in this setting to reduce the bias induced by the nonparametric approach (recall that the model under the null hypothesis is obtained using such an approach). These global envelope tests should be interpreted with caution (in particular when the number of points is moderate). The considered null hypothesis is not $H_0$: $\bX$ is a PIPP but more $H_0$: $\bX$ is a PIPP with parametric logarithm of PIF of the form $\sum_{k=1}^K \theta_k\varphi_k(r-\delta)$ with $K=2,3,6$ for $D_1,D_2$ and $D_3$ respectively. Deriving an appropriate global envelope test is an interesting question which is left for a future research.

\begin{figure}[!h]
\centering
\renewcommand{\arraystretch}{0}
\includegraphics[width=1\textwidth]{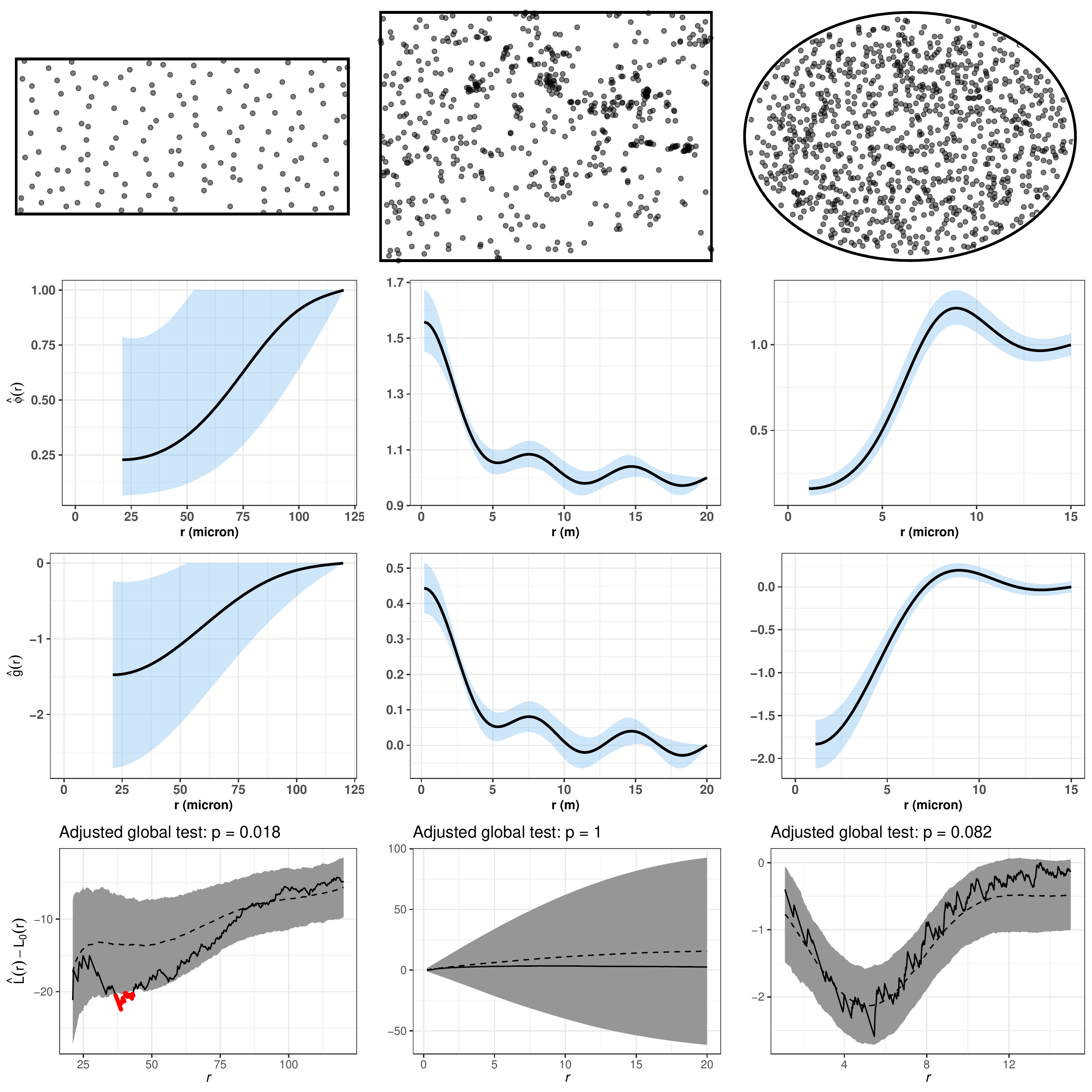}  
\caption{From left to right: $152$ locations of amacrine cells in the retina of a rabbit, $584$ longleaf pine trees, and $964$ locations of electrical breakdown spots in a microelectronic capacitor (first row), estimated pairwise interaction functions (second row), estimated log-pairwise interaction functions  (third row) and global envelope tests with $T(r)=\hat{L}(r)$ used to test the fitted pairwise interaction models for the three datasets where the solid black line denotes the data function and the dashed line represents the expectation $T_{0}(r)$ (fourth row). The selected smoothing parameter $\hat{K^{\star}}$ used in the estimation is obtained using the composite AIC. We have  $\hat{K^{\star}}=2$ for the amacrine cells data, $\hat{K^{\star}}=6$ for the longleaf pine trees and $\hat{K^{\star}}=3$ for the electrical breakdown spots. The steel blue areas represent the $95\%$ pointwise confidence intervals for the estimates and the grey areas represent the $95\%$ global (`st') envelopes.}
\label{fig:app.cAIC}
\end{figure}

\newpage 

\section{Auxiliary Lemmas}

\begin{lemma} \label{lem:Esk}
Let $p, k\in \mathbb N$ with $p\ge2$. Under conditions \ref{C:model}-\ref{C:anKn}, there exists $\kappa<\infty$ such that
\begin{enumerate}[(i)]
\item \begin{equation}\label{eq:Esk}
\EE \left[ |S_k(0,\bX)|^p\right]	 \le \kappa^p \|\varphi_k\|_\infty \EE[Z^p] = O(k^{m(p-2)})
\end{equation}
where $Z$ is a Poisson random variable with mean 1.\\
\item
\[
	\EE \left[ \|\bS_{K_n} (0,\bX) \|^p\right] = O(K_n^{m(p-2)+p/2}).
\]
\item For $p=2,4$
\begin{equation}\label{eq:rKn}
\EE \left[ |r_{K_n}(0,\bX)|^p\right] = O(a_{K_n}^p) \quad \text{ where } \quad r_{K_n}(u,\bX)= 1 - \exp\left( -\sum_{k>K_n} \theta_k^\star S_k(u,\bX)\right).
\end{equation}
\end{enumerate}
\end{lemma}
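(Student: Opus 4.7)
For part (i), I invoke local stability (\ref{C:model}), which provides a coupling under which $\bX$ is pathwise dominated by a Poisson process $\tilde\bX$ of intensity $\bar\lambda$. Since the basis functions are supported on $[0,R]$ (or extended by zero outside), set $f(v) := \varphi_k(\|v\|-\delta)$ on $\mathcal B(0)$ and zero elsewhere, so that $|S_k(0,\bX)|\le \sum_{v\in\tilde\bX}|f(v)|$. The factorial moment (Campbell) formula for Poisson then gives
\[
\EE[|S_k(0,\bX)|^p] \le \sum_{\pi} \prod_{B\in\pi}\bar\lambda \int |f|^{|B|}\mathrm dv,
\]
summed over set partitions $\pi$ of $\{1,\dots,p\}$. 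Orthonormality combined with~\ref{C:anKn}(iii) yields $\int f^2 = \sigma_d\int_0^R \varphi_k(r)^2(r+\delta)^{d-1}\mathrm dr \le C$, and Cauchy--Schwarz likewise gives $\int |f|\le C$; for $q\ge 3$, $\int|f|^q\le C\|\varphi_k\|_\infty^{q-2}$. Maximising $\sum_{B\in\pi}(|B|-2)_+$ over partitions of $\{1,\dots,p\}$ gives $p-2$, attained by the single-block partition, so the dominating term is $O(\|\varphi_k\|_\infty^{p-2}) = O(k^{m(p-2)})$.

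Part (ii) follows from (i) and a convexity bound. Writing $\|\bS_{K_n}(0,\bX)\|^2 = 1 + \sum_{k=1}^{K_n}S_k(0,\bX)^2$ and applying the power-mean inequality $(1+\sum_k a_k)^{p/2} \le 2^{p/2-1}(1 + K_n^{p/2-1}\sum_k a_k^{p/2})$ with $a_k = S_k(0,\bX)^2$,
\[
\EE[\|\bS_{K_n}(0,\bX)\|^p] = O\Bigl(K_n^{p/2-1}\sum_{k=1}^{K_n} k^{m(p-2)}\Bigr) = O\bigl(K_n^{p/2 + m(p-2)}\bigr).
\]

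For part (iii), set $X := \sum_{k>K_n}\theta_k^\star S_k(0,\bX) = \sum_{v\in\bX\cap\mathcal B(0)} g^{\mathrm{tail}}(\|v\|-\delta)$ with $g^{\mathrm{tail}} := \sum_{k>K_n}\theta_k^\star\varphi_k$. Using $\|\varphi_k\|_\infty = O(k^m)$ and $\btheta^\star\in\mathcal S_m$,
\[
\|g^{\mathrm{tail}}\|_\infty \le \sum_{k>K_n}|\theta_k^\star|\,\|\varphi_k\|_\infty \le C\sum_{k>K_n} k^m|\theta_k^\star| = C\,a_{K_n},
\]
hence $|X|\le C\,a_{K_n} N$ with $N:=|\bX\cap\mathcal B(0)|$. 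From $|1-e^{-x}|\le |x|e^{|x|}$ and $a_{K_n}\le 1$ for $n$ large enough,
\[
\EE[|r_{K_n}(0,\bX)|^p] \le \EE[|X|^p e^{p|X|}] \le (Ca_{K_n})^p\, \EE[N^p e^{pCN}] = O(a_{K_n}^p),
\]
the last moment being finite by the remark following Corollary~\ref{cor:existence}.

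The main technical obstacle is part (i): pinning down the exact exponent $p-2$ of $\|\varphi_k\|_\infty$ requires the partition expansion and crucially relies on the orthonormality-based bound $\int f^2 = O(1)$ rather than the much cruder $\|f\|_\infty^2\cdot R = O(k^{2m})$. Once (i) is established, (ii) and (iii) follow by routine convexity and Taylor-type estimates.
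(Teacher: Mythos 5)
Your proposal is correct. Parts (i) and (ii) follow essentially the same route as the paper: stochastic domination of $\bX\cap\mathcal B(0)$ by a Poisson process via local stability, the Poisson moment formula expanded over set partitions (the paper writes it equivalently over partition types $(r_1,\dots,r_p)\in\mathcal I(p)$ with the combinatorial weights $K_p$), the orthonormality-based bound $\int f^2=O(1)$ via \ref{C:anKn}(iii), and the observation that the exponent $\sum_B(|B|-2)_+$ is maximised at $p-2$ by the one-block partition; part (ii) is the same H\"older/power-mean reduction to $K_n^{p/2-1}\sum_k\EE|S_k|^p$. Part (iii) is where you genuinely diverge: the paper expands $r_{K_n}^2$ and $r_{K_n}^4$ as explicit exponential series and bounds them term by term separately for $p=2$ and $p=4$, whereas you use the single elementary inequality $|1-e^{-x}|\le|x|e^{|x|}$ together with $|X|\le C a_{K_n}N(\mathcal B(0))$ and the exponential moment bound $\EE[N^pe^{cN}]<\infty$. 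Your version is shorter, avoids the case split, and gives the bound for every $p\ge 1$ rather than only $p=2,4$; the paper's version buys nothing extra here. (Incidentally, your derivation also confirms that the exponent of $\|\varphi_k\|_\infty$ in the displayed bound of \eqref{eq:Esk} should read $p-2$, consistent with the stated order $O(k^{m(p-2)})$.)
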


\begin{proof}
(i) First, note that $|S_k(0,\bX)|^p\le h(\bX)$ where $h(\bX)=\big(\sum_{v\in \bX\cap \mathcal B(0)} |\varphi_k|(\|v\|-\delta) \big)^p$. Since $\bx \mapsto h(\bx)$ is an increasing function (i.e. $h(\bx)\le h(\mathbf y)$ for any $\bx \subset \mathbf y$) and since $\bX \cap \mathcal B(0)$ is stochastically dominated by a Poisson point process with intensity $\rho=\bar \lambda |\mathcal B(0)|$ by condition \ref{C:model} (which ensures \eqref{ls}), we have (see e.g. \cite{bassan1990moments,privault2012moments}) that 
\[
	\EE \left[ |S_k(0,\bX)|^p\right] \le \sum_{r_1,\dots,r_p \in \mathcal I(p)} K_p(r_1,\dots,r_p) \prod_{\ell=1}^p \left( \int_{\mathcal B(0)}\varphi_k^\ell (\|v\|-\delta)\rho \mathrm d v\right)^{r_\ell}
\]
where 
\[
	\mathcal I(p) = \left\{ (r_1,\dots,r_p)\in \mathbb N^p: \sum_{\ell=1}^p \ell r_\ell=p\right\}
	\quad \text{ and } \quad
	K_p(r_1,\dots,r_p)= p! \prod_{\ell=1}^p \frac{1}{(\ell !)^{r_\ell} r_\ell!}.
\]
Now, observe that using \ref{C:anKn}(iii) 
\begin{align*}
	\left( \int_{\mathcal B(0)}\varphi_k^\ell (\|v\|-\delta)\rho \mathrm d v\right)^{r_\ell} &=
	\left( \int_{0}^R\varphi_k^\ell (r)w(r) \frac{(r+\delta)^{d-1}}{w(r)} \rho \mathrm d r\right)^{r_\ell} \\
	&\le \|\varphi_k\|_\infty^{r_\ell (\ell-2)\vee 0} \kappa^{r_\ell} 
\end{align*}
with $\kappa=\rho \sup_{r\in [0,R]} (r+\delta)^{d-1}/w(r)$. Therefore, since $\kappa^{\sum_\ell r_\ell}\le \kappa^p$ for any $(r_1,\dots,r_p)\in \mathcal I(p)$ we have
\[
	\EE \left[ |S_k(0,\bX)|^p\right] \le \, \kappa^p \sum_{r_1,\dots,r_p \in \mathcal I(p)} K_p(r_1,\dots,r_p)  \|\varphi_k\|_\infty^{f(r_1,\dots,r_p)}
\]
with $f(r_1,\dots,r_p) = \sum_{\ell=1}^p (r_\ell(\ell-2))\vee 0$ which rewrites as $f(r_1,\dots,r_p)=p-\tilde f(r_1,\dots,r_p)$ with $\tilde f(r_1,\dots,r_p)= r_1 +2\sum_{\ell=2}^p r_\ell$. Since $\sum_{\ell=1}^pr_{\ell}\ge 1$, if $r_1=0$ then $\sum_{\ell=2}^p r_\ell\ge 1$ and $\tilde f(r_1,\dots,r_p)\ge 2$. The same conclusion is also obviously true if $r_1\ge 2$. Finally if $r_1=1$, since $p> 1$, $\sum_{\ell=2}^p r_\ell\neq0$ otherwise $\sum_{\ell=1}^p \ell r_\ell =1\neq p$. Therefore, for any $(r_1,\dots,r_p)\in \mathcal I(p), f(r_1,\dots,r_p)\le p-2$ whereby we deduce that
\begin{align*}
\EE \left[ |S_k(0,\bX)|^p\right] & \le \kappa^p \|\varphi_k\|_\infty^{p-2} 	\sum_{r_1,\dots,r_p \in \mathcal I(p)} K_p(r_1,\dots,r_p) \\
&\le  \kappa^p \|\varphi_k\|_\infty^{p-2} \EE[Z^p] \quad \text{ where } Z\sim \mathcal P(1).
\end{align*}

(ii) From Hölder's inequality
\begin{align*}
\|\bS_{K_n}(0,\bX)\|^p &\le  \left( \sum_{k=1}^{K_n}|S_k(0,\bX)|^{2\alpha}\right)^{p/2\alpha} \left( K_n\right)^{p/2\beta}	 \quad \text{ with } 1/\alpha+1/\beta=1 \\
&\le K_n^{p/2-1}  \sum_{k=1}^{K_n}|S_k(0,\bX)|^{p} \quad \text{ with } p=2 \alpha.
\end{align*}
Hence, from (i)
\[
	\EE \left[ \|\bS_{K_n}(0,\bX)\|^p \right]= O\left(K_n^{p/2-1} \sum_{k=1}^{K_n} k^{m(p-2)} \right)= O\left(K_n^{m(p-2)+p/2}\right).
\]
(iii)	Let $p=2$. We have using a series expansion of the exponential function
\begin{align*}
r_{K_n}(0,\bX)^2 &= 1+ \mathrm{e}^{-2\sum_{k>K_n} \theta_k^\star S_{k}(0,\bX)} - 2 	\mathrm{e}^{-\sum_{k>K_n} \theta_k^\star S_{k}(0,\bX)} \\
&= \sum_{j=2}^\infty \frac{(-1)^j}{j!} \left\{\left( 2\sum_{k>K_n} \theta_k^\star S_{k}(0,\bX)\right)^j
-2 \left( \sum_{k>K_n} \theta_k^\star S_{k}(0,\bX)\right)^j 
\right\}\\
&\le \sum_{j\ge 2} \frac{2^j}{j!} \left|\sum_{k>K_n} \theta_k^\star \|\varphi_k\|_\infty \right|^j  N(\mathcal B(0))^j \\
&\le \sum_{j\ge 2} \frac{(2a_{K_n})^j }{j!}   N(\mathcal B(0))^j \\
&\le 4a_{K_n}^2 N(\mathcal B(0))^2 \sum_{j\ge 2} \frac{1}{(j-2)!}   (2N(\mathcal B(0)))^{j-2}\\
&\le 4 a_{K_n}^2 N(\mathcal B(0))^2 \exp(2N(\mathcal B(0))).
\end{align*}
Thus, $\EE(r_{K_n}(0,\bX)^2)= O(a_{K_n}^2)$ from~\eqref{ls} which implies that for any bounded $A\subset \mathbb R^d$, $\EE[N(A)^p \exp(cN(A))]<\infty$ for any $p\ge 1,c>0$.


For the case $p=4$, we follow the same strategy and leave the reader to check that
\begin{align*}
 r_{K_n}(0,\bX)^4	&= 1+ \sum_{j\ge 0} \frac{(-1)^j}{j!}\left( -4+6 2^j -4  3^j + 4^j\right) \left( \sum_{k>K_n} \theta_k^\star S_k(0,X)\right)^j \\
&= \sum_{j\ge 4} \frac{(-1)^j}{j!}\left( -4+6  2^j -4  3^j + 4^j\right) \left( \sum_{k>K_n} \theta_k^\star S_k(0,X)\right)^j \\
& \le 12 \sum_{j\ge 4} \frac{(4a_{K_n})^j}{j!} N(\mathcal B(0))^j \\
&\le 3072 \, a_{K_n}^4 N(\mathcal B(0))^4 \exp(4N(\mathcal B(0)))
\end{align*}
which yields $\EE[r_{K_n}(0,\bX)^4]=O(a_{K_n}^4)$ using again~\eqref{ls}.
\end{proof}


\begin{lemma}
\label{lemma1}
Under conditions \ref{C:Wn}-\ref{C:anKn}, the following results hold for any $\by, \, \btheta \in \mathcal{S}_m$
\begin{enumerate}[(i)] 
\item $|W_n|^{-1} \, \by^\top  \mathbf{A}(\bX;\btheta) \by \xrightarrow[n \to \infty]{\mbox{a.s.}} \by^\top \widebar{\mathbf{A}}(\btheta) \by$, \\
\item $|W_n|^{-1} \left(
\by_{K_n}^\top  \mathbf{A}_{K_n}(\bX;\btheta) \by_{K_n} -
\by^\top  \mathbf{A}(\bX;\btheta) \by
\right) \xrightarrow[n\to \infty]{\mbox{a.s.}} 0$,
\item 
$|W_n|^{-1} \, \by^\top  \mathbf{B}(\bX;\btheta) \by \xrightarrow[n \to \infty]{\mbox{a.s.}}  \by^\top \widebar{\mathbf{B}}(\btheta) \by$,
\item 
$|W_n|^{-1} \left(
 \by_{K_n}^\top  \mathbf{B}_{K_n}(\bX;\btheta) \by_{K_n} 
-\by^\top  \mathbf{B}(\bX;\btheta) \by
 \right) \xrightarrow[n \to \infty]{\mbox{a.s.}} 0$.
\end{enumerate}
\end{lemma}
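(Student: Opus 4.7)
The plan is to establish (i) and (iii) via a pointwise ergodic theorem for integral functionals of the stationary Gibbs point process $\bX$, and then deduce (ii) and (iv) by controlling the truncation error through the summability $\by \in \mathcal S_m$ combined with the moment bounds of Lemma~\ref{lem:Esk}.

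For (i), I rewrite $\by^\top \mathbf{A}(\bX;\btheta)\by = \int_{W_n} F(u,\bX)\, \mathrm{d}u$ with $F(u,\bX) = (\by^\top \bS(u,\bX))^2 \lambda(u,\bX;\btheta)$. The integrand is translation invariant, i.e.\ $F(u,\bX) = F(\mathbf{0},\uptau_u \bX)$, because $\lambda$ satisfies \eqref{ti} and $\bS(u,\bX) = \bS(\mathbf{0}, \uptau_u \bX)$ by construction. Nguyen's ergodic theorem \cite{nguyen1979ergodic} then yields the a.s.\ limit $\EE[F(\mathbf{0},\bX)] = \by^\top \widebar{\mathbf{A}}(\btheta)\by$, finiteness being guaranteed by \eqref{ls} together with Lemma~\ref{lem:Esk}(ii), applied to $\by^\top \bS(\mathbf{0},\bX)$ with $\by \in \mathcal S_m$. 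Part (iii) follows by the same route: by \eqref{fr}, the inner integration in $\mathbf B(\bX;\btheta)$ reduces to $\mathcal B(u)$ (the factor $1 - \phi_\btheta(\|v-u\|)$ vanishes for $\|v-u\| > R + \delta$, and the second summand of $\mathbf B$ is already supported on $\mathcal B(u)$), so $\by^\top \mathbf B(\bX;\btheta) \by = \int_{W_n} G(u,\bX)\, \mathrm{d}u$ for a translation-invariant integrable functional $G$, and Nguyen's theorem again provides the a.s.\ limit $\by^\top \widebar{\mathbf B}(\btheta)\by$.

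For (ii), set $T_{K_n}(u,\bX) := \by^\top \bS(u,\bX) - \by_{K_n}^\top \bS_{K_n}(u,\bX) = \sum_{k>K_n} y_k S_k(u,\bX)$, so that the difference of quadratic forms reads
\begin{equation*}
\int_{W_n} \bigl(2\, \by_{K_n}^\top \bS_{K_n}(u,\bX)\, T_{K_n}(u,\bX) + T_{K_n}(u,\bX)^2 \bigr)\, \lambda(u,\bX;\btheta)\, \mathrm{d}u.
\end{equation*}
Using \eqref{fr} and \ref{C:anKn}(ii), one has $|T_{K_n}(u,\bX)| \le \varepsilon_n\, N(\mathcal B(u))$ with $\varepsilon_n := \sum_{k>K_n} |y_k|\, \|\varphi_k\|_\infty = O\bigl(\sum_{k>K_n} k^m |y_k|\bigr) \to 0$ since $\by \in \mathcal S_m$. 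Combined with \eqref{ls}, the contribution of $T_{K_n}^2 \lambda$ divided by $|W_n|$ is bounded by $\bar\lambda\, \varepsilon_n^2\, |W_n|^{-1}\int_{W_n} N(\mathcal B(u))^2\, \mathrm{d}u$, and the latter average tends a.s.\ to $\EE[N(\mathcal B(\mathbf{0}))^2] < \infty$ by Nguyen's ergodic theorem applied once more; the cross term is controlled by Cauchy--Schwarz combined with the already proved (i). Part (iv) follows along the same lines after decomposing $\mathbf B(\bX;\btheta)$ into its two summands: in the first, I replace each factor $\by^\top \bS(\cdot,\bX)$ by its $K_n$-truncation and bound the resulting errors through $\varepsilon_n$, using \eqref{ls} for $\lambda(u,\bX;\btheta)\lambda(v,\bX;\btheta) \le \bar\lambda^2$; in the second, I replace $\widetilde{\boldsymbol\varphi}(\|v-u\|-\delta)^\top \by$ by its partial sum, whose pointwise error is again $O(\varepsilon_n)$ uniformly in $u,v$, while $\lambda(\{u,v\},\bX;\btheta) \le \bar\lambda^2$ by \eqref{ls}.

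The main obstacle is upgrading the truncation control in (ii) and (iv) from an $L^1$ bound to almost-sure convergence. This will require pairing the pointwise decay $\varepsilon_n \to 0$ (provided by $\by \in \mathcal S_m$ and \ref{C:anKn}(ii)) with the existence of an a.s.\ finite limit for $|W_n|^{-1}\int_{W_n} N(\mathcal B(u))^p\, \mathrm{d}u$ when $p=2$, itself a consequence of Nguyen's ergodic theorem. Implicit throughout is ergodicity of the underlying stationary infinite-volume Gibbs measure, which is ensured by the finite-range and local-stability assumptions in \ref{C:model}.
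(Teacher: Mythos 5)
Your treatment of (i), (ii) and (iv) follows essentially the same route as the paper: Nguyen--Zessin's ergodic theorem for the translation-covariant, additive functional $\int_{W_n}(\by^\top\bS(u,\bX))^2\lambda(u,\bX;\btheta)\,\mathrm du$ dominated via \eqref{ls}, \eqref{fr} and $S_k(u,\bX)\le c\,k^m N(\mathcal B(u))$, and a truncation bound of order $\sum_{k>K_n}k^m|y_k|\to 0$ multiplied by the a.s.\ convergent average $|W_n|^{-1}\int_{W_n}(1+N(\mathcal B(u)))^2\,\mathrm du$. That part is sound.

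There is, however, a genuine gap in (iii). You write that $\by^\top\mathbf B(\bX;\btheta)\by=\int_{W_n}G(u,\bX)\,\mathrm du$ for a translation-invariant functional $G$ because the integrand vanishes for $\|v-u\|>R+\delta$. But the inner integral in $\mathbf B$ is over $v\in W_n$, hence over $W_n\cap\mathcal B(u)$, and this equals $\int_{\mathcal B(u)}$ only when $\mathcal B(u)\subset W_n$, i.e.\ for $u\in W_n^-=W_n\ominus(R+\delta)$. For $u$ in the boundary layer $W_n\setminus W_n^-$ the inner integral depends on $W_n$, so the outer integrand is not a fixed translation-covariant functional and the ergodic theorem cannot be applied directly to the whole double integral. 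The paper's proof splits $\beta_{W_n}=\beta_{1,W_n^-}+\beta_{2,\partial W_n}$, applies the ergodic theorem to $\beta_{1,W_n^-}$ (using $|W_n^-|/|W_n|\to1$), and shows $|W_n|^{-1}\beta_{2,\partial W_n}\to0$ almost surely via a $p$-th moment bound, the summability $\sum_n(|W_n\setminus W_n^-|/|W_n|)^p<\infty$ from condition \ref{C:Wn}, and Borel--Cantelli. This is exactly the step your argument omits, and it is the reason that summability requirement appears in \ref{C:Wn} at all: the ratio $|W_n\setminus W_n^-|/|W_n|\to0$ alone gives only convergence in $L^1$ of the boundary term, not the almost-sure convergence claimed in the lemma. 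You correctly flagged the $L^1$-versus-a.s.\ issue for the truncation error in (ii) and (iv), but missed the analogous (and here unavoidable) issue for the boundary contribution in (iii).
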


\begin{proof}
The proof uses extensively Theorem 4~\cite{nguyen1979ergodic}, referred to as the ergodic theorem hereafter.

\begin{enumerate}[(i)]
\item Let 
\begin{align*}
\alpha_{W_n}(\by,\bX;\btheta) &= \by^\top \mathbf{A}(\bX; \btheta) \by ={\int_{W_n} \left\{ \sum\limits_{k=0}^{\infty} y_k \, S_k(u,\bX) \right\}^2 \lambda (u,\bX;\btheta) \mathrm{d}u }.
\end{align*}
We have that $({\alpha}_\Lambda)_\Lambda$ is additive and covariant \cite[see][]{nguyen1979ergodic}. Let $W_0=[0,1]^d$ and $ D \subset W_0$. Under the local stability ensured by \ref{C:model}, we have
\begin{align*}
| {\alpha}_D (\by,\bX;\btheta) | \leq {\int_{D}  \bar{\lambda} \, \left\{ \sum\limits_{k=0}^{\infty} |y_k| \, S_k(u,\bX) \right\}^2  \mathrm{d}u }.
\end{align*}
Under the finite range property ensured by \ref{C:model}, we have that almost surely for any $k\ge 1$
\begin{align*} 
S_k(u,\bX) = \sum_{v \in \bX \cap \mathcal B(u)} \varphi_k(\|v-u\|-\delta) \leq & \, N(\mathcal B(u)) \| \varphi_k \|_{\infty} 
\leq  \, c \, k^m N(\mathcal B(u))
\end{align*}
for some constant $c>0$. Hence, it follows that 
\begin{align*}
| \bar{\alpha}_D (\by,\bX;\btheta) | \leq & \, c {\int_{D} (1+N(\mathcal B(u))  )^2 \|\by\|_m^2  \mathrm{d}u } \\
\leq & \, c  \|\by\|_m^2 \, ( 1 + N(W_0 \oplus (R+\delta)))^2.
\end{align*}
 Since $\by \in \mathcal{S}_m$ and since $\EE [ N(A)^2 ] < \infty$ for any bounded $A \subset \mathbb{R}^d$, $| {\alpha}_D (\by,\bX;\btheta) |$ is bounded by an integrable random variable uniform in D. We are therefore allowed  to apply the ergodic theorem:  $|W_n|^{-1} {\alpha}_{W_n} (\by,\bX;\btheta) \xrightarrow[n \to \infty]{a.s.} |W_0|^{-1} \EE [{\alpha}_{W_0} (\by,\bX;\btheta) ]$ and from the stationarity of $\bX$, this reduces to
\[
|W_n|^{-1} {\alpha}_{W_n} (\by,\bX;\btheta) \xrightarrow[n \to \infty]{a.s.} \EE \left[ \left \{ \sum\limits_{k=0}^{\infty} y_k \, S_k(\mathrm{0},\bX) \right\}^2 \lambda(\mathrm{0},\bX;\btheta) \right]
\]
for any $\by, \, \btheta \in \mathcal{S}_m$. 

\item Let $\alpha^\prime_{W_n}(\bX)= 
\by_{K_n}^\top  \mathbf{A}_{K_n}(\bX;\btheta) \by_{K_n} -
\by^\top  \mathbf{A}(\bX;\btheta) \by$. Using algebraic identities and the local stability of $\bX$, we have
\begin{align*}
\alpha^\prime_{W_n}(\bX)&= \int_{W_n} 
\left[
\left\{\sum_{k=0}^{K_n} y_k S_k(u,\bX)  \right\}^2- 
\left\{\sum_{k=0}^{\infty} y_k S_k(u,\bX)  \right\}^2 \right] 
\lambda(u,\bX;\btheta) \mathrm d u \\
&\le 2 \bar \lambda \|\by\|_m \,\sum_{k>K_n} k^m|y_k| \;  \alpha^{\prime\prime}_{W_n}(\bX)
\end{align*}
where $\alpha^{\prime\prime}_{W_n}(\bX) = \int_{W_n} (1+N(\mathcal B(u)))^2 \mathrm d u$. Using the same arguments as in (i), we can apply the ergodic and then use the stationarity of the process to prove that as $n\to \infty$, $|W_n|^{-1}\alpha^{\prime\prime}_{W_n}(\bX)\to \EE[(1+N(\mathcal B(0)))^2]$ almost surely. Therefore almost surely 
\[
	|W_n|^{-1} \alpha^\prime_{W_n}(\bX) = o_{\mathrm{a.s.}}\left(\sum_{k>K_n} k^m|y_k|\right) = o_{\mathrm{a.s.}}(1)
\]
since $\by\in \mathcal{S}_m$.
\item Let 
\[
\beta_{W_n}(\bX) = \by^\top  \mathbf{B}(\bX;\btheta) \by = \int_{W_n}\int_{W_n} f(u,v,\bX;\btheta,\by) \mathrm du \mathrm dv	
\]
with
\begin{align*}
f(u,v,\bX;\btheta,\by) =&	\big\{ \by^\top \bS(u,\bX)\by^\top \bS(v,\bX) (1-\phi_{\btheta}(\|v-u\|)) \\
&+ ({\by_{-0}}^\top \boldsymbol{\varphi}(\|v-u\|))^2 \phi_{\btheta}(\|v-u\|) \big\} \lambda(u,\bX;\btheta)\lambda(v,\bX;\btheta).
\end{align*}
By definition of the basis functions and the finite range property, we have that
\begin{align*}
\beta_{W_n}(\bX) &= \by^\top  \mathbf{B}(\bX;\btheta) \by = \int_{W_n}\int_{W_n \cap \mathcal B(u)} f(u,v,\bX;\btheta,\by) \mathrm du \mathrm dv \\
&= \beta_{1,W_n^-}(\bX) + \beta_{2,\partial W_n}(\bX)	
\end{align*}
where $W_n^-=W_n\ominus (R+\delta)$ and $\partial W_n = W_n \setminus W_n^-$ and where
\begin{align*}
\beta_{1,W_n^-}(\bX) &= \int_{W_n^-}\int_{\mathcal B(u)} f(u,v,\bX;\btheta,\by) \mathrm du \mathrm dv \\
\beta_{2,\partial W_n}(\bX) &= \int_{\partial W_n}\int_{W_n\cap \mathcal B(u)} f(u,v,\bX;\btheta,\by) \mathrm du \mathrm dv
\end{align*}
Using the notation $\mathcal B_2(u)$ as the ball or annulus centered at $u$ with radii $\delta$ and $2(R+\delta)$ and since $\phi_{\theta}$ and $1-\phi_{\btheta}$ are bounded, it is left to check that for any $u \in W_n$ and $v \in \mathcal B(u)$
\begin{equation} \label{eq:upf}
|f(u,v,\bX;\btheta,\by)| \le c \nu(u) \quad \text{where} \quad \nu(u) = (1+N(\mathcal B(u)))(1+N(\mathcal B_2(u)))
\end{equation}
On the one hand, $(\beta_{1,\Lambda})_\Lambda$ is additive and covariant. Then, by using~\eqref{eq:upf} it is clear that for any $D\subset W_0=[0,1]^d$, $|\beta_{1,D}(\bX)|$ is bounded by an integrable random variable uniformly in $D$. Hence, the ergodic may be applied which by using the stationarity of $\bX$ and the fact that $|W_n|/|W_n^-|\to 1$ by \ref{C:Wn} leads exactly to $|W_n|^{-1} \beta_{1,W_n^-}(\bX) \to\by^\top \widebar{\mathbf{B}}(\btheta) \by$ almost surely as $n\to \infty$. On the other hand, from~\eqref{eq:upf}
\[
|\beta_{2,\partial W_n}(\bX)| \le c \int_{\partial W_n} \nu(u) \mathrm d u.
\]
Now, for any $p\ge 1$ from Hölder's inequality and the stationarity of $\bX$
\begin{align}
\EE \left[ |W_n^{-1} \beta_{2,\partial W_n}(\bX)|^p\right] \le& c^p |W_n|^{-p} \int_{(\partial W_n)^p}\EE [
\nu(u_1)\dots \nu(u_p)
]
\mathrm du_1 \dots \mathrm du_p  \nonumber\\
\le & c^p |W_n|^{-p}  
\left(\int_{\partial W_n}  
\EE[\nu(u)^p]^{1/p} \mathrm d u
\right)^p \nonumber\\
&\le c^p \EE[\nu(0)^p] \left( \frac{|\partial W_n|}{|W_n|}\right)^p. \nonumber 
 \end{align}
whereby we deduce using condition~\ref{C:Wn} and Borel-Cantelli's lemma  that $|W_n|^{-1} \beta_{2,\partial W_n}(\bX) \to 0$ almost surely as $n\to \infty$, which ends the proof.
\item Let $\beta^\prime_{W_n}(\bX)= 
\by_{K_n}^\top  \mathbf{B}_{K_n}(\bX;\btheta) \by_{K_n} -
\by^\top  \mathbf{B}(\bX;\btheta) \by = \int_{W_n}\int_{W_n\cap \mathcal B(u)} h(u,v,\bX;\btheta,\by) \mathrm d u \mathrm d v$ where
\begin{align*}
h(u,v,\bX;\btheta,\by) =& \Big\{\Big(
-\sum_{k>K_n} y_k S_k(u,\bX) \sum_{\ell=0}^\infty y_k S_k(v,\bX) 
-\sum_{k>K_n} y_k S_k(v,\bX) \sum_{\ell=0}^\infty y_\ell S_\ell(u,\bX) 
\Big) \\
&\times (1-\phi_{\btheta}(\|v-u\|))	-\Big( \sum_{k>K_n} y_k \varphi_k(\|v-u\|) \sum_{\ell=1}^\infty y_\ell \varphi_\ell(\|v-u\|) \Big) \\
&\times  \phi_{\btheta}(\|v-u\|) \Big\} \lambda(u,\bX;\btheta) \lambda(v,\bX;\btheta).
\end{align*}
Again, we leave the reader to check that for any $u\in W_n$ and $v\in W_n\cap \mathcal B(u)$ 
\begin{align*}
|h(u,v,\bX;\btheta,\by) | 
&\le c \Big( \sum_{k>K_n}k^m|y_k|\Big) \nu(u)
\end{align*}
whereby we deduce that
\[
	|\beta^\prime_{W_n}(\bX)| \le c \Big( \sum_{k>K_n}k^m|y_k|\Big) \int_{W_n} \nu(u) \mathrm du.
\]
Arguments similar to those developed in (i)-(iii) show that $|W_n|^{-1}|\beta^\prime_{W_n}(\bX)|\to 0$ almost surely as $n \to \infty$.
\end{enumerate}
\end{proof}

The next result is fundamental in  our paper. Combined with Lemma~\ref{lemma1}, it first states that the sensitivity matrix $\mathbf{A}_{K_n}(\bX;\btheta)$ and the variance covariance matrix $\mathbf{A}_{K_n}(\bX;\btheta^\star)+\mathbf{B}_{K_n}(\bX;\btheta^\star)$ of the estimating equation $\mathbf{e}_{K_n}(\bX;\btheta^\star)$ are positive definite for $n$ large enough. Those results are key-ingredients to apply a general central limit theorem (CLT) (from \cite{coeurjolly2017parametric}) to $\mathbf{e}_{K_n}(\bX;\btheta^\star)$.


\begin{lemma}
\label{lemma2}
 Assume \ref{C:Wn}-\ref{C:anKn}. Then,

\begin{enumerate}[(i)] 
\item 
$\underset{\substack { \by \in \mathbb{R}^{\mathbb{N}}, \| \by \| =1, \\ \btheta \in \mathcal{S}_m}}{\inf} \,  \by^\top  \widebar{\mathbf{A}}(\btheta) \, \by > 0$, \\

\item 
$\| \widebar{\mathbf{A}}_{K_n}^{-1}(\btheta_{K_n}^{\star}) \| = O(1)$ where $O(1)$ is uniform in $n$. \\

If in addition, condition \ref{C:psi} holds, then \\

\item $\underset{\substack { \by \in \mathcal{S}_m \\ \| \by \| =1}}{\inf} \, \by^\top \left \{ \widebar{\mathbf{A}}(\btheta^{\star}) + \widebar{\mathbf{B}}(\btheta^{\star}) \right \} \by > 0$ \\

\item
$ \bigg(\by_{K_n}^\top \left \{ \mathbf{A}_{K_n}(\btheta^{\star}) + \mathbf{B}_{K_n}(\btheta^{\star}) \right \} \by_{K_n} \bigg)^{-1/2} \by_{K_n}^\top \boldsymbol{e}_{K_n}(\bX;\btheta^{\star})  \xrightarrow{d} \mathcal{N}(0,1)$.

\item For any $\btheta,\btheta^\prime \in \mathcal{S}_m$ (eventually random)
 \[
 	\|\mathbf A_{K_n} (\bX;\btheta) -  \mathbf A_{K_n} (\bX;\btheta^{\prime})\| = O_{\mathrm P}(|W_n| K_n^{m+3/2} \|\btheta-\btheta^\prime\|).
 \]
 \item $\|\mathbf A_{K_n} (\bX;\btheta^\star) - \mathbf A_{K_n} (\btheta^\star)\| = o_{\mathrm P}(|W_n|).$
 \end{enumerate}
\end{lemma}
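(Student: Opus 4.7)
My plan is to reduce the operator-norm bound to a Frobenius-norm bound and then to control each matrix entry via Markov's inequality applied to its variance. Let $M_n = \mathbf A_{K_n}(\bX;\btheta^\star) - \mathbf A_{K_n}(\btheta^\star)$, whose $(j,k)$-entry is
\[
(M_n)_{jk} = T_{jk}(\bX) - \EE[T_{jk}(\bX)], \qquad T_{jk}(\bX) = \int_{W_n} h_{jk}(u,\bX)\,\mathrm d u,
\]
with $h_{jk}(u,\bX) = S_j(u,\bX)\, S_k(u,\bX)\,\lambda(u,\bX;\btheta^\star)$ and the convention $S_0\equiv 1$. Since $\|M_n\|\le \|M_n\|_F$, Markov's inequality reduces the claim to showing that $|W_n|^{-2}\sum_{j,k=0}^{K_n}\Var[T_{jk}(\bX)] \to 0$.

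The crucial step is a uniform variance bound of the form $\Var[T_{jk}(\bX)] = O\bigl(|W_n|\,\EE[h_{jk}(0,\bX)^2]\bigr)$. To obtain it, I would write
\[
\Var[T_{jk}(\bX)] = \int_{W_n}\int_{W_n}\Cov\bigl(h_{jk}(u,\bX),h_{jk}(v,\bX)\bigr)\,\mathrm d u\,\mathrm d v
\]
and split the double integral according to whether $\|v-u\|\le 2(R+\delta)$ or not. On the short-range region, Cauchy--Schwarz combined with the stationarity of $\bX$ yields a contribution of order $|W_n|\cdot \EE[h_{jk}(0,\bX)^2]$, the transverse volume being $O(1)$. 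On the long-range region, I would exploit the finite range Markov property of $\bX$ together with the Poisson stochastic domination coming from local stability, following the same machinery that underlies the proof of Lemma~\ref{lemma1}(iii)--(iv).

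Combining this variance bound with Cauchy--Schwarz and Lemma~\ref{lem:Esk}(i) gives $\EE[h_{jk}(0,\bX)^2]\le \bar\lambda^2\sqrt{\EE[S_j(0,\bX)^4]\,\EE[S_k(0,\bX)^4]} = O(j^m k^m)$, so that $\sum_{j,k=0}^{K_n}\Var[T_{jk}(\bX)] = O\bigl(|W_n|K_n^{2m+2}\bigr)$. Hence $|W_n|^{-2}\sum\Var[T_{jk}] = O(K_n^{2m+2}/|W_n|)$, which vanishes under the growth conditions on $K_n$ in force whenever this lemma is actually invoked inside Theorem~\ref{THM:CONST}.

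The main obstacle is the control of the long-range part of the covariance integral. For a Gibbs process with finite interaction range, correlations do not strictly vanish at finite distance (unlike in the Poisson case), so the truncation at $\|v-u\|\le 2(R+\delta)$ is not by itself enough. A coupling or disagreement-percolation-type argument leveraging the Markov property of $\bX$ and the local stability must be used to show that $\Cov(h_{jk}(u,\bX),h_{jk}(v,\bX))$ decays rapidly enough in $\|v-u\|$ to keep the integral of order $|W_n|$ rather than $|W_n|^2$, and to verify that the relevant constants do not deteriorate with $j$ and $k$ beyond the $j^m k^m$ factor coming from Lemma~\ref{lem:Esk}. That is the technical heart of the proof.
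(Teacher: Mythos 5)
Your proposal addresses only part (vi) of the lemma. Parts (i)--(iii) (the positive-definiteness of $\widebar{\mathbf{A}}(\btheta)$ and of $\widebar{\mathbf{A}}(\btheta^\star)+\widebar{\mathbf{B}}(\btheta^\star)$, which are where the orthonormality of the basis is actually used and which the paper singles out as the key ingredients), part (iv) (the CLT for the score) and part (v) (the Lipschitz-type bound on $\btheta\mapsto\mathbf A_{K_n}(\bX;\btheta)$) are not treated at all, so even if your argument for (vi) were complete it would cover only one sixth of the statement.

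For (vi) itself, the step you correctly identify as ``the technical heart'' is in fact a genuine gap that cannot be closed under the paper's hypotheses. Conditions \ref{C:Wn}--\ref{C:anKn} only give local stability and finite interaction range; they do not give uniqueness of the infinite-volume Gibbs measure, low activity, or any mixing. A stationary Gibbs process with finite-range interactions can exhibit long-range order (non-decaying covariances), and a disagreement-percolation coupling of the type you invoke requires an additional smallness condition on the activity that is nowhere assumed. Consequently the bound $\Var[T_{jk}(\bX)]=O(|W_n|\,\EE[h_{jk}(0,\bX)^2])$ is not available in this generality, and your second-moment/Frobenius route stalls exactly where you say it does. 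The paper avoids the issue entirely: it proves (vi) by applying the Nguyen--Zessin ergodic theorem to the additive, covariant functional $\by^\top\mathbf A(\bX;\btheta^\star)\by$ (exactly as in Lemma~\ref{lemma1}(i)), which yields almost-sure convergence of $|W_n|^{-1}\by^\top\left(\mathbf A(\bX;\btheta^\star)-\mathbf A(\btheta^\star)\right)\by$ to zero with no variance computation and no rate --- which is precisely why the conclusion is only $o_{\mathrm P}(|W_n|)$. A secondary point: even granting your variance bound, the resulting requirement $K_n^{2m+2}/|W_n|\to 0$ is not implied by the growth condition $K_n^{\frac{2}{3}(m+m\vee 1+2)}/|W_n|\to 0$ of Theorem~\ref{THM:CONST}(ii) once $m>0$ (e.g.\ the Haar basis, $m=1/2$, needs $K_n^{3}/|W_n|\to 0$ versus the assumed $K_n^{7/3}/|W_n|\to 0$), so the claim that the conditions ``in force'' suffice is also inaccurate.
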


\begin{proof}

\begin{enumerate}[(i)]
\item Let $\by \in \ell^2(\mathbb{N})$ with $\|\by\|<\infty$ and $\btheta\in \mathcal{S}_m$ be such that $\by^\top \widebar{\mathbf{A}}(\btheta) \by =0$. We aim at proving that necessarily this leads to $\by=0$. Let $\by=(y_0,\by_{-0}^\top)^\top$\\
\underline{Step 1: Proof that $y_0=0$}. Let $E_0=\{\bx \in N_{lf}: N(\mathcal B(0))=0\}$ where we remind that $\mathcal B(u)$ stands for the ball or annulus centered at $u$ with radii $\delta$ and $R+\delta$. Now it is clear that 
\begin{align*}
0&= 	\by^\top \widebar{\mathbf{A}}(\btheta) \by  = \EE \left[ \left\{ \by^\top \mathbf{S}(0,\bX)\right\}^2 \lambda(0,\bX;\btheta)\right] \\
&\ge \EE \left[ \left\{ \by^\top \mathbf{S}(0,\bX)\right\}^2 \lambda(0,\bX;\btheta)
\mathbf{1}_{E_0}(\bX)\right].
\end{align*}
The trick is to observe first that $\left\{ \by^\top \mathbf{S}(0,\bX)\right\}^2 \lambda(0,\bX;\btheta)\mathbf{1}_{E_0}(\bX) = y_0^2\mathrm{e}^{\theta_0} \mathbf{1}_{E_0}(\bX)$ and second that $\mathrm{P}(E_0)>0$ since in particular $\phi(r+\delta)>0$ for any $r\in [0,R]$. These different arguments yield $y_0=0$. \\
\underline{Step 2: Proof that $\by_{-0}=0$}. We are reduced to show that $\EE\left[ \left\{ \by_{-0}^\top \mathbf{S}_{-0}(0,\bX)\right\}^2 \lambda(0,\bX;\btheta)\right]=0$ implies that $\by_{-0}=0$. Let $r\in [\delta+\varepsilon,R+\delta-\varepsilon]$ for some (small) $\varepsilon>0$ and define 
\[
	E_{r,\varepsilon} =\{\bx \in N_{lf}:  N(\mathcal B(0))=N(\beta(r,\varepsilon))
	=1\}
\]
where $\beta(r,\varepsilon)=B((0,r),\varepsilon)$. The set $E_{r,\varepsilon}$ is thus the set of locally finite configurations of points with at most one point close to the $d$-dimensional point $(r,0,\dots,0)^\top$ in $\mathcal B(0)$. Now, since  any $\bx \in E(r,\varepsilon)$ contains at most one point in $\mathcal B(0)$, we have 
\begin{align*}
&\left\{ \by_{-0}^\top \mathbf{S}_{-0}(0,\bX)\right\}^2 \lambda(0,\bX;\btheta) \mathbf{1}_{E_{r,\varepsilon}}(\bX)\\
&= \sum_{j,k=1}^\infty y_jy_k\sum_{u \in \bX \cap \beta(r,\varepsilon)} \varphi_j(\|u\|-\delta) \sum_{v\in \bX\cap \beta(r,\varepsilon)} \varphi_k(\|v\|-\delta) \prod_{w\in \bX \cap \beta(r,\varepsilon)} \phi_{\btheta}(\|w\|)\mathbf{1}_{E_{r,\varepsilon}}(\bX)\\ 
&= \sum_{j,k=1}^\infty y_jy_k\sum_{u \in \bX \cap \beta(r,\varepsilon)} \varphi_j(\|u\|-\delta) \varphi_k(\|u\|-\delta) \phi_{\btheta}(\|u\|)\mathbf{1}_{E_{r,\varepsilon}}(\bX) \\
&= \sum_{u\in \bX \cap \beta(r,\varepsilon)} \{\by_{-0}^\top \boldsymbol{\varphi}(\|u\|-\delta)\}^2 \phi_{\btheta}(\|u\|) \mathbf{1}_{E_{r,\varepsilon}}(\bX)\\
&= \{\by_{-0}^\top \boldsymbol{\varphi}(r-\delta)\}^2 \phi_{\btheta}(r) \mathbf{1}_{E_{r,\varepsilon}}(\bX)+ R(\bX)
\end{align*}
with 
\[
	R(\bX) = \left\{\sum_{u\in \bx \cap \beta(r,\varepsilon)} \{\by_{-0}^\top \boldsymbol{\varphi}(\|u\|-\delta)\}^2 \phi_{\btheta}(\|u\|) -\{\by_{-0}^\top \boldsymbol{\varphi}(r-\delta)\}^2 \phi_{\btheta}(r) \right\}\mathbf{1}_{E_{r,\varepsilon}}(\bX).
\]
Since $|R(\bX)| \le \sup_{u\in \beta(r,\varepsilon)} |\{\by_{-0}^\top \boldsymbol{\varphi}(\|u\|-\delta)\}^2 \phi_{\btheta}(\|u\|) -\{\by_{-0}^\top \boldsymbol{\varphi}(r-\delta)\}^2 \phi_{\btheta}(r) |$, it is clear that $R(\bX)$ tends to 0 in $L^1$ as $\varepsilon \to 0$. Therefore, there exists $\varepsilon_1$ such that for any $\varepsilon<\varepsilon_1$
\begin{align*}
0 &= 	\EE\left[ \left\{ \by_{-0}^\top \mathbf{S}_{-0}(0,\bX)\right\}^2 \lambda(0,\bX;\btheta)\right] \\
&\geq \EE\left[ \left\{ \by_{-0}^\top \mathbf{S}_{-0}(0,\bX)\right\}^2 \lambda(0,\bX;\btheta) \mathbf 1_{E_{r,\varepsilon}}(\bX)\right] \\
&=\mathrm{P}(E_{r,\varepsilon}) \{\by_{-0}^\top \boldsymbol{\varphi}(r-\delta)\}^2 \phi_{\btheta}(r) + \EE[R(\bX)] \\
&\ge \frac12 \mathrm{P}(E_{r,\varepsilon}) \{\by_{-0}^\top \boldsymbol{\varphi}(r-\delta)\}^2 \phi_{\btheta}(r) .
\end{align*}
Multiplying by $w(r-\delta)$ and integrating over $[\delta+\varepsilon,R+\delta-\varepsilon]$ leads to
\begin{align*}
0 &= 	\int_{\delta+\varepsilon}^{R+\delta-\varepsilon} \left\{ \by_{-0}^\top \boldsymbol{\varphi}(r-\delta)\right\}^2 \lambda(0,\bX;\btheta) w(r-\delta)\phi_{\btheta}(r)\mathrm{P}(E_{r,\varepsilon}) \mathrm d r  \\
&\ge \inf_{r\in [\varepsilon,R-\varepsilon]} \phi_{\btheta}(r+\delta)\mathrm{P}(E_{r,\varepsilon}) \int_{\varepsilon}^{R-\varepsilon} \left\{ \by_{-0}^\top \boldsymbol{\varphi}(r)\right\}^2 w(r) \mathrm{d}r.
\end{align*}
Using the orthonormality of the basis function
\[
\int_{\varepsilon}^{R-\varepsilon} \left\{ \by_{-0}^\top \boldsymbol{\varphi}(r)\right\}^2 w(r) \mathrm{d}r \le \int_{0}^{R} \left\{ \by_{-0}^\top \boldsymbol{\varphi}(r)\right\}^2 w(r) \mathrm{d}r = \|\by_{-0}\|^2.	
\]
Hence, using the dominated convergence theorem, as $\varepsilon\to 0$
\[
\int_{\varepsilon}^{R-\varepsilon} \left\{ \by_{-0}^\top \boldsymbol{\varphi}(r)\right\}^2 w(r) \mathrm{d}r \to  \int_{0}^{R} \left\{ \by_{-0}^\top \boldsymbol{\varphi}(r)\right\}^2 w(r) \mathrm{d}r = \|\by_{-0}\|^2.
\]
In other words there exists $\varepsilon_2>0$ such that for any $\varepsilon<\min(\varepsilon_1,\varepsilon_2)$
\begin{equation}\label{eq:step2}
0 \ge \inf_{r\in [\varepsilon,R-\varepsilon]} \mathrm{P}(E_{r,\varepsilon}) \inf_{r\in [0,R]} \phi_{\btheta}(r+\delta) \frac{\|\by_{—0}\|^2}{2} \ge 0.
\end{equation}
	
By condition \ref{C:model}, we have already mentioned (see Section~\ref{sec:result}) that $\inf_{r\in [0,R]} \phi_{\btheta}(r+\delta)>0$. Moreover, for some fixed value of $0<\varepsilon<\min(\varepsilon_1,\varepsilon_2)$, $\mathrm{P}(E_{r,\varepsilon})>0$. These arguments and~\eqref{eq:step2} yield $\by_{-0}=0$ and thus $\by=0$.
\item Let ${\by}_{K_n}\in \mathbb R^{K_n+1}$ be such that $\|\by_{K_n}\|=1$ and let $\tilde \by=({\by}_{K_n}^\top,0^\top)^\top$. Since from (i)
\begin{align*}
\by_{K_n}^\top \widebar{\mathbf{A}}_{K_n}(\btheta_{K_n}^{\star}) \by_{K_n}&
= {\tilde{\by}}^\top  \widebar{\mathbf{A}}(\btheta_{K_n}^{\star}) \tilde \by\ge \underset{\substack { \by \in \mathbb{R}^{\mathbb{N}}, \| \by \| =1, \\ \btheta \in \mathcal{S}_m }}{\inf} \by^\top  \widebar{\mathbf{A}}(\btheta) \by>0
\end{align*}  
we deduce that
\begin{align*}
\| \widebar{\mathbf{A}}_{K_n}^{-1}(\btheta_{K_n}^{\star}) \|  = \left \{ \underset{\substack { \by_{K_n} \in \mathbb{R}^{K_n+1} \\ \| \by_{K_n} \| =1}} {\inf} \, \by_{K_n}^\top \widebar{\mathbf{A}}_{K_n}(\btheta_{K_n}^{\star}) \by_{K_n} \right \}^{-1} =O(1).
\end{align*} 
\item Let $\mathcal{F}_0=\{ \bx \in N_{lf}: N(\mathcal{B}_2)=0 \}$ where $\mathcal{B}_2(u)$ is the ball (or annulus) centered at $u$ with radii $\delta$ and $2(R+\delta)$. It is clear that $\pi_0=P(\mathcal{F}_0)>0$. For all $v \in \mathcal B(0) \cup \{ \mathrm{0} \}$, we have
\begin{align*}
\lambda(v,\bX;\btheta^\star) \, \mathbf{1}_{\mathcal{F}_0}(\bX) = & \, e^{\theta_0^\star}  \, \mathbf{1}_{\mathcal{F}_0}(\bX) \\
\by^\top \bS(v,\bX) \, \mathbf{1}_{\mathcal{F}_0}(\bX) = & \, y_0 \, \mathbf{1}_{\mathcal{F}_0}(\bX).
\end{align*}
Thus with similar arguments used in (i), we have $\by^\top \left \{ \widebar{\mathbf{A}}(\btheta^{\star}) + \widebar{\mathbf{B}}(\btheta^{\star}) \right \} \by \geq T_1 + T_2 + T_3$ where
\begin{align*}
T_1 = & \, \pi_0 \, y_0^2 \, e^{\theta_0^\star}, \\
T_2 = & \, \pi_0 {\int_{\mathcal{B}(0)} y_0^2 \, e^{2\theta_0^\star} \left(1 -\phi(\|v\|) \right) \mathrm{d}v } \\
       = & \, \pi_0 \, y_0^2 \, e^{2\theta_0^\star} \, | \sigma_{d} | {\int_{\delta}^{R+\delta} (1 - \phi(r) ) \, r^{d-1} \, \mathrm{d}r }, \\
T_3 = & \, \pi_0 \, \int_{\mathcal B(0)} \left \{ 
\by_{-0}^\top {\boldsymbol{\varphi}}(\|v\|-\delta) \right \}^2 
\, e^{2\theta_0^\star} \phi(\|v\|) 
\mathrm{d}v  \\
       = & \, \pi_0 \, |\sigma_{d}| \,  e^{2\theta_0^\star} {\int_{\delta}^{R+\delta}  \left \{ \by_{—0}^\top {\boldsymbol{\varphi}}(r-\delta) \right \}^2 \, \phi(r) \, r^{d-1} \,  \mathrm{d}r }.
\end{align*}
By the definition of $\psi$ in condition \ref{C:psi}, $T_1+T_2= \pi_0 \, y_0^2 \, e^{\theta_0^\star} \, \psi$. The term $T_3$ can be rewritten as
\begin{align*}
T_3 = & \, \pi_0 \, |\sigma_{d}| \,  e^{2\theta_0^\star} {\int_{0}^{R}  \left \{ \by_{-0}^\top {\boldsymbol{\varphi}}(r) \right \}^2 w(r) \, \phi(r) \, \frac{(r+\delta)^{d-1}}{w(r)} \, \mathrm{d}r }. \\
\end{align*} 
Now, let $\by$ be such that $\by^\top \left \{ \widebar{\mathbf{A}}(\btheta^{\star}) + \widebar{\mathbf{B}}(\btheta^{\star}) \right \} \by=0$, then by~condition \ref{C:psi}, there exists $c>0$ such that
\[
	0 =\by^\top \left \{ \widebar{\mathbf{A}}(\btheta^{\star}) + \widebar{\mathbf{B}}(\btheta^{\star}) \right \} \by \ge c y_0^2 + c{\int_{0}^{R}  \left \{ \by_{-0}^\top {\boldsymbol{\varphi}}(r) \right \}^2 w(r) \, \tilde \phi(r) \mathrm{d}r } \ge 0
\]
where $\tilde \phi(r)= (r+\delta)^{d-1} \phi(r) /w(r)$.
We deduce on the one hand that $y_0=0$ and on the other hand that for any $r \in [0,R]$, $\left \{ \by_{-0}^\top {\boldsymbol{\varphi}}(r) \right \}^2 w(r) \, \tilde \phi(r)=0$. In particular, over $\Delta=\{r\in [0,R]: \tilde \phi(r)>0\}$, $\left \{ \by_{-0}^\top {\boldsymbol{\varphi}}(r) \right \}^2 w(r) =0$. If we integrate this over $\Delta$ which by~condition \ref{C:psi} is such that $[0,R]\setminus \Delta$ has zero Lebesgue measure, we obtain 
\begin{align*}
\int_{\Delta} 	\left \{ \by_{-0}^\top {\boldsymbol{\varphi}}(r) \right \}^2 w(r) \mathrm d r &=
\int_{0}^R 	\left \{ \by_{-0}^\top {\boldsymbol{\varphi}}(r) \right \}^2 w(r) \mathrm d r = \|\by_{-0}\|^2 =0. 
\end{align*}
Hence $\by=0$, which ends the proof of (iii).

\item From the GNZ formula (\ref{gnz}), we have $\EE \, \left[ \boldsymbol{e}_{K_n}(\bX;\btheta^{\star}) \right] = 0$ whereby we deduce that $\EE \left[ \by_{K_n}^\top \boldsymbol{e}_{K_n}(\bX;\btheta^{\star}) \right] = 0$. Let $\bSigma_{K_n}(\btheta^{\star})=  \mathbf{A}_{K_n}(\btheta^{\star}) + \mathbf{B}_{K_n}(\btheta^{\star})$. Since by Lemma~\ref{lemma1} for $n$ large enough,  ${\displaystyle  |W_n|^{-1} \by_{K_n}^\top \bSigma_{K_n}(\btheta^{\star}) \, \by_{K_n} \geq \frac12 \inf_{\by \in  \mathcal{S}_m }  \by^\top \widebar{\bSigma}(\btheta^{\star}) \, \by }$, we deduce from Lemma~\ref{lemma2}(iii) ${ \liminf_{n\to \infty} \;   |W_n|^{-1} \by_{K_n}^\top \bSigma_{K_n}(\btheta^{\star}) \, \by_{K_n} >0}$. We can now apply a central limit theorem (CLT) to $\by_{K_n}^\top \boldsymbol{e}_{K_n}(\bX;\btheta^{\star})$ proved by \cite{coeurjolly2017parametric} for innovation type statistics (condition \ref{C:model} is a particular case and implies that the sequences $\alpha_n$ and $R_n$ in \cite{coeurjolly2017parametric} are such that $\alpha_n=R_n=R+\delta$).

\item Let $\boldsymbol{\Psi}_{K_n}(\bX)=\mathbf{A}_{K_n}(\bX;\btheta) - \mathbf{A}_{K_n}(\bX;\btheta^\prime)$. We have 
\begin{align*}
\boldsymbol{\Psi}_{K_n}(\bX) = {\int_{W_n} \bS_{K_n}(u, \bX) \bS_{K_n}(u, \bX)^\top  \left \{ \lambda (u,\bX;\btheta) - \lambda (u,\bX;\btheta^\prime) \right \} \mathrm{d}u}.
\end{align*}
Using first-order Taylor expansion, there exists $s \in (0,1)$ such that 
\[
\lambda (u,\bX;\btheta) - \lambda (u,\bX;\btheta^\prime) = (\btheta - \btheta^\prime)^\top \bS_{K_n}(u, \bX) \lambda (u,\bX;\check{\btheta})
\]
where $\check{\btheta} = \btheta + s (\btheta^\prime - \btheta)$. Hence, 
\begin{align*}
\boldsymbol{\Psi}_{K_n}(\bX)  =  {\int_{W_n}  \bS_{K_n}(u, \bX) \bS_{K_n}(u, \bX)^\top (\btheta - \btheta^\prime)^\top \bS_{K_n}(u, \bX) \lambda (u,\bX;\check{\btheta}) \mathrm{d}u}.
\end{align*}
By~\eqref{ls},
\begin{align*}
\| \boldsymbol{\Psi}_{K_n}(\bX) \| \leq c\| \btheta - \btheta^\prime \|   {\int_{W_n} \| \bS_{K_n}(u, \bX) \|^3  \mathrm{d}u}.
\end{align*}
Using Lemma~\ref{lem:Esk} and the ergodic theorem (see e.g. proof of Lemma~\ref{lemma1}) we have
\begin{align*}
 {\int_{W_n} \| \bS_{K_n}(u, \bX) \|^3  \mathrm{d}u} = O_{a.s.}(K_n^{m+3/2} \, |W_n|)
\end{align*}
whereby we deduce that $\| \boldsymbol{\Psi}_{K_n}(\bX) \| = O_\mathrm{P}(K_n^{m+3/2}|W_n| \, \|\btheta-\btheta^\prime\|)$. 
\item Following the proof of Lemma~\ref{lemma1}(i), we could also prove that 
\[
 \xi_n(\bX):=	|W_n|^{-1}\sup_{\substack { \by \in \, \mathcal{S}_m \\ \| \by \| =1}}  \left \{ \by^\top  \left(
	{\mathbf{A}}(\bX;\btheta^\star) -{\mathbf{A}}(\btheta^\star) 
	\right)
	\by \right \} \to 0
\]
almost surely as $n\to \infty$. 
\end{enumerate}
\end{proof}

\section{Consistency of the estimating equation $\boldsymbol{e}_{K_n}(\bX;\btheta_{K_n}^{\star})$}  \label{sec:prop1}

\begin{proposition} 
 \label{PROP:BIAS} 
Let
\begin{align*}
	\boldsymbol{e}_{K_n}(\bX;\btheta_{K_n}^{\star}) = \boldsymbol{e}_{K_n}(\bX;\btheta^{\star}) + \boldsymbol{\delta}_{K_n}(\bX,\btheta^\star)
\end{align*}
where
\begin{equation}\label{eq:bdelta}
\boldsymbol{\delta}_{K_n}(\bX,\btheta^\star) = 	
\int_{W_n} \bS_{K_n}(u,\bX) \lambda(u,\bX;\btheta^{\star}) r_{K_n}(u,\bX) \mathrm{d}u 
\end{equation}
and where $r_{K_n}$ is given by~\eqref{eq:rKn}. Under the assumptions of Theorem~\ref{THM:CONST}(i), as $n\to~\infty$
\begin{align*}
\boldsymbol{e}_{K_n}(\bX;\btheta^{\star})&=  O_{\mathrm{P}}({K_n^{1/2}|W_n|^{1/2}}) 	\\
\boldsymbol{\delta}_{K_n}(\bX,\btheta^\star) &= O_{\mathrm{P}}(a_{K_n} K_n^{1/2}|W_n|+ a_{K_n}K_n^{(m+1)/2}|W_n|^{1/2}) 
\end{align*}
whereby we deduce that
\begin{align}
\boldsymbol{e}_{K_n}(\bX;\btheta_{K_n}^{\star}) = O_{\mathrm P} \left( {K_n^{(m\vee 1)/2}|W_n|^{1/2}} +a_{K_n} K_n^{1/2}|W_n| \right) \label{eq:score}.
\end{align}
\end{proposition}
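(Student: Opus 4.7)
The plan is to use the decomposition $\boldsymbol{e}_{K_n}(\bX;\btheta_{K_n}^\star) = \boldsymbol{e}_{K_n}(\bX;\btheta^\star) + \boldsymbol{\delta}_{K_n}(\bX,\btheta^\star)$ already exhibited in the statement and to bound each summand separately. For $\boldsymbol{e}_{K_n}(\bX;\btheta^\star)$, the GNZ formula~\eqref{gnz} gives $\EE[\boldsymbol{e}_{K_n}(\bX;\btheta^\star)] = 0$ and $\Var(\boldsymbol{e}_{K_n}(\bX;\btheta^\star)) = \mathbf A_{K_n}(\btheta^\star)+\mathbf B_{K_n}(\btheta^\star)$, so Markov's inequality applied to $\|\boldsymbol{e}_{K_n}(\bX;\btheta^\star)\|^2$ reduces the task to bounding the trace of this covariance matrix. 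Each diagonal entry can be written by stationarity as $|W_n|$ times an expectation involving $S_k(0,\bX)$ and $\widetilde{\boldsymbol{\varphi}}(\cdot)$, and I would show it is $O(|W_n|)$ uniformly in $k$: for $\mathbf A_{K_n}(\btheta^\star)$ by Lemma~\ref{lem:Esk}(i) at $p=2$ (which gives $\EE[S_k(0,\bX)^2] = O(1)$) together with local stability; for $\mathbf B_{K_n}(\btheta^\star)$ the first double integral is controlled in the same way after noting that $1-\phi_{\btheta^\star}(\|v-u\|)$ restricts the inner integral to the bounded annulus $\mathcal B(u)$, while for the second double integral condition~\ref{C:anKn}(iii) lets me pass from $(r+\delta)^{d-1}$ to $w(r)$ and then invoke orthonormality to obtain $O(|W_n|)$. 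Summing $K_n+1$ such diagonal entries yields $\|\boldsymbol{e}_{K_n}(\bX;\btheta^\star)\| = O_{\mathrm P}(K_n^{1/2}|W_n|^{1/2})$.

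For $\boldsymbol{\delta}_{K_n}(\bX,\btheta^\star)$, I would separate the mean from the centred part. By stationarity, local stability, Cauchy--Schwarz, and Lemma~\ref{lem:Esk}(i),(iii) at $p=2$, one has $|\EE[\boldsymbol{\delta}_{K_n,k}(\bX,\btheta^\star)]| \le \bar\lambda\,|W_n|\,\sqrt{\EE[S_k(0,\bX)^2]}\,\sqrt{\EE[r_{K_n}(0,\bX)^2]} = O(a_{K_n}|W_n|)$ uniformly in $k$, hence $\|\EE[\boldsymbol{\delta}_{K_n}(\bX,\btheta^\star)]\| = O(a_{K_n}K_n^{1/2}|W_n|)$. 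For the centred part, the integrand $h_k(u,\bX) := S_k(u,\bX)\lambda(u,\bX;\btheta^\star)r_{K_n}(u,\bX)$ is a local functional: by~\eqref{fr} it depends on $\bX$ only through $\bX\cap\mathcal B(u)$. A pointwise bound $\Var(h_k(0,\bX)) = O(k^m a_{K_n}^2)$ follows from Cauchy--Schwarz combined with Lemma~\ref{lem:Esk}(i) at $p=4$ (yielding $\EE[S_k(0,\bX)^4] = O(k^{2m})$) and Lemma~\ref{lem:Esk}(iii) at $p=4$ (yielding $\EE[r_{K_n}(0,\bX)^4] = O(a_{K_n}^4)$). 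Adapting the Brillinger-type covariance bounds underlying the innovation-variance computations of \cite{coeurjolly2013fast,coeurjolly2017parametric}, which are available under \ref{C:Wn}--\ref{C:model}, the variance of the spatial integral of $h_k$ is of order $|W_n|$ times its pointwise variance, so $\Var(\boldsymbol{\delta}_{K_n,k}) = O(|W_n|\,k^m a_{K_n}^2)$. Summing over $k \le K_n$ and applying Chebyshev gives $\|\boldsymbol{\delta}_{K_n}(\bX,\btheta^\star)-\EE[\boldsymbol{\delta}_{K_n}(\bX,\btheta^\star)]\| = O_{\mathrm P}(a_{K_n}K_n^{(m+1)/2}|W_n|^{1/2})$, which together with the mean bound establishes the second display of the proposition.

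Combining the two pieces by the triangle inequality gives $\|\boldsymbol{e}_{K_n}(\bX;\btheta_{K_n}^\star)\| = O_{\mathrm P}(K_n^{1/2}|W_n|^{1/2}+a_{K_n}K_n^{(m+1)/2}|W_n|^{1/2}+a_{K_n}K_n^{1/2}|W_n|)$. Since $m\vee 1\ge 1$ the first term is absorbed into $K_n^{(m\vee 1)/2}|W_n|^{1/2}$, and a short case analysis on $m\ge 1$ versus $m<1$, using $a_{K_n}K_n^{1/2}\to 0$, shows that the middle term is $o(K_n^{(m\vee 1)/2}|W_n|^{1/2})$; this produces~\eqref{eq:score}. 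I expect the main obstacle to be the centred variance bound: a crude application of Cauchy--Schwarz to the double integral over $W_n\times W_n$ would yield the wrong $|W_n|^2$ rate, so the correct $|W_n|$ rate must be obtained by splitting the domain into $\|v-u\|\le c$ (where Cauchy--Schwarz on the pointwise variance suffices) and $\|v-u\|>c$ (where one exploits the summable covariance decay of a finite-range, locally stable Gibbs process, following the strategy used in the innovation CLT of \cite{coeurjolly2013fast,coeurjolly2017parametric}).
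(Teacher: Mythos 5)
Your proof follows the paper's own argument almost step for step in its first two thirds: the same decomposition, the same identification $\Var[\boldsymbol{e}_{K_n}(\bX;\btheta^{\star})]=\mathbf{A}_{K_n}(\btheta^{\star})+\mathbf{B}_{K_n}(\btheta^{\star})=O(K_n|W_n|)$ via Lemma~\ref{lem:Esk} (with the orthonormality/weight argument for the second block of $\mathbf{B}$), the same Cauchy--Schwarz bound $\|\EE[\boldsymbol{\delta}_{K_n}]\|\le c|W_n|\,\EE[\|\bS_{K_n}(0,\bX)\|^2]^{1/2}\EE[r_{K_n}(0,\bX)^2]^{1/2}=O(a_{K_n}K_n^{1/2}|W_n|)$, and the same final absorption of $a_{K_n}K_n^{(m+1)/2}|W_n|^{1/2}$ into $K_n^{(m\vee 1)/2}|W_n|^{1/2}$ using $a_{K_n}K_n^{1/2}\to 0$. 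All of that is correct and matches the paper.

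The gap is in your treatment of the centred part of $\boldsymbol{\delta}_{K_n}$. You claim $\Var(\delta_{K_n,k})=O(|W_n|\,k^m a_{K_n}^2)$ on the grounds that $h_k(u,\bX)=S_k(u,\bX)\lambda(u,\bX;\btheta^{\star})r_{K_n}(u,\bX)$ is a finite-range local functional and that one can adapt the covariance computations of \cite{coeurjolly2013fast,coeurjolly2017parametric} or invoke ``summable covariance decay'' of a finite-range, locally stable Gibbs process. Neither is available here: the exact variance identities in those references come from the GNZ formula~\eqref{gnz} applied to \emph{innovations}, i.e.\ statistics of the form $\sum_{u\in\bX\cap W_n}h(u,\bX\setminus u)-\int_{W_n}h(u,\bX)\lambda(u,\bX)\,\mathrm{d}u$, which are conditionally centred; $\int_{W_n}h_k(u,\bX)\,\mathrm{d}u$ is not of that form. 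Moreover conditions \ref{C:Wn}--\ref{C:psi} only guarantee \emph{existence} of the stationary Gibbs measure, not uniqueness or any mixing, so for $\|u-v\|>2(R+\delta)$ the functionals $h_k(u,\cdot)$ and $h_k(v,\cdot)$ live on disjoint regions but need not decorrelate, and the off-diagonal part of $\EE[\boldsymbol{\delta}_{K_n}\boldsymbol{\delta}_{K_n}^\top]$ cannot be dismissed by a mixing argument. The good news is that you do not need the $O_{\mathrm P}(|W_n|^{1/2})$ fluctuation you are claiming: the paper writes $\Var[\boldsymbol{\delta}_{K_n}]=T_1-T_2$ with $T_2=\EE[\boldsymbol{\delta}_{K_n}]\EE[\boldsymbol{\delta}_{K_n}]^\top$ and settles for $\Var[\boldsymbol{\delta}_{K_n}]=O(a_{K_n}^2K_n|W_n|^2+a_{K_n}^2K_n^{m+1}|W_n|)$, i.e.\ it tolerates a fluctuation of the same order $a_{K_n}K_n^{1/2}|W_n|$ as the mean, which is harmless since that term already appears in the stated rate; only the near-diagonal part $\|v-u\|\le R+\delta$ of the second moment is charged with the $k^m$-losing Cauchy--Schwarz estimate $\EE[S_k^4]^{1/2}\EE[r_{K_n}^4]^{1/2}=O(k^m a_{K_n}^2)$, producing the $a_{K_n}^2K_n^{m+1}|W_n|$ term. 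You should replace your covariance-decay step by this cruder second-moment bound (and, if you follow the paper's computation of $T_1$, make explicit how the contribution from $\|v-u\|>R+\delta$ is accounted for, since that is exactly where a per-coordinate bound of order $a_{K_n}^2$ rather than $k^m a_{K_n}^2$ is required).
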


In the following $c$ may stand for a generic non-negative constant independent of $\btheta, \btheta^\star, \bX, n,\dots$ and which may vary from line to line. 

\begin{proof}

We split the proof into two parts focusing on the control of $\boldsymbol{e}_{K_n}(\bX;\btheta^{\star})$ and $ \boldsymbol{\delta}_{K_n}(\bX,\btheta^\star)$.

\noindent\underline{Control of $\boldsymbol{e}_{K_n}(\bX;\btheta^{\star})$}.
The random vector $ \boldsymbol{e}_{K_n}(\bX;\btheta^{\star})$, which is a specific innovation statistic sudied in particular by~\cite{baddeley2005residual,coeurjolly2013fast}, is centered with variance given by $\Var [ \boldsymbol{e}_{K_n}(\bX;\btheta^{\star})] = |W_n| (\mathbf{\bar A}_{K_n}(\btheta^\star) + \mathbf{\bar B}_{K_n}(\btheta^\star))$. From the stationarity of the Gibbs point process~\eqref{ls}, the boundedness of $\phi_{\btheta}$ for any $\btheta$ and Lemma~\ref{lem:Esk}
\begin{align*}
\|\mathbf{\bar A}_{K_n}(\btheta^\star)\| & \le \bar \lambda \, \EE	\left[ \|\mathbf{S}_{K_n}(0,\bX)\|^2\right] = O(K_n) \\
\|\mathbf{\bar B}_{K_n}(\btheta^\star)\| & \le 
c   \int_{\mathcal B(0)} \EE \left[ \|\mathbf{S}_{K_n}(0,\bX)\| \|\mathbf{S}_{K_n}(v,\bX)\|\right] \mathrm dv+ 
c \int_{\mathcal B(0)} \|\tilde{\boldsymbol{\varphi}}(\|v\|-\delta)\|^2 \mathrm dv \\
&= O\left( \EE	\left[ \|\mathbf{S}_{K_n}(0,\bX)\|^2\right]\right) 
+ O \left( \sum_{k=1}^{K_n} \int_0^R \varphi_k(r)^2 w(r) \frac{(r+\delta)^{d-1}}{w(r)}\mathrm dr\right) =O(K_n) 
\end{align*}
whereby we deduce that $\Var [ \boldsymbol{e}_{K_n}(\bX;\btheta^{\star})] = O(K_n |W_n|)$ and that $\boldsymbol{e}_{K_n}(\bX;\btheta^{\star})=O_{\mathrm{P}}(\sqrt{K_n|W_n|})$. \\

\noindent\underline{Control of $\boldsymbol{\delta}_{K_n}(\bX;\btheta^{\star})$}.
First, $\EE\left[ \boldsymbol{\delta}_{K_n}(\bX;\btheta^{\star})\right]= |W_n|\EE \left[ \mathbf S_{K_n}(0,\bX) r_{K_n}(0,\bX) \lambda(0,\bX; \btheta^\star\right]$. From~\eqref{ls}, Cauchy-Schwarz inequality and Lemma~\ref{lem:Esk}, we have
\begin{align}
\|\EE\left[ \boldsymbol{\delta}_{K_n}(\bX;\btheta^{\star})\right]\| & \le c|W_n| \EE\left[ \|\mathbf S_{K_n}(0,\bX) \| \, |r_{K_n}(0,\bX)|\right]\nonumber\\
& \le c|W_n| \EE\left[ \|\mathbf S_{K_n}(0,\bX) \|^2\right]^{1/2} \EE\left[ r_{K_n}(0,\bX)^2\right]^{1/2} \nonumber\\
&= O( |W_n| a_{K_n} K_n^{1/2} ). \label{eq:biasdelta}
\end{align}
Second, $\Var \left[ \boldsymbol{\delta}_{K_n}(\bX;\btheta^{\star})\right] = T_1-T_2$ with $T_1=\EE\left[ \boldsymbol{\delta}_{K_n}(\bX;\btheta^{\star})\boldsymbol{\delta}_{K_n}(\bX;\btheta^{\star})^\top\right]$ and $T_2=\EE\left[ \boldsymbol{\delta}_{K_n}(\bX;\btheta^{\star})\right]\EE\left[ \boldsymbol{\delta}_{K_n}(\bX;\btheta^{\star})\right]^\top$. Equation~\eqref{eq:biasdelta} yields $\|T_2\|=O(\|\EE[\boldsymbol{\delta}_{K_n}(\bX;\btheta^{\star})]\|^2)=O(|W_n|^2a_{K_n}^2K_n)$. Regarding the term $T_1$, using the stationarity of the Gibbs point process~\eqref{fr}, we have for $n$  large enough (ensuring that $\mathcal B(0)\subset W_n$)
\begin{align*}
T_1 &= 
\EE \left[ \int_{W_n}\int_{W_n} \mathbf{S}_{K_n}(u,\bX)\mathbf{S}_{K_n}(v,\bX)^\top 
r_{K_n}(u,\bX)r_{K_n}(v,\bX) 
\lambda(u,\bX;\btheta^\star)\lambda(v,\bX;\btheta^\star)
\right] \mathrm du\mathrm dv\\
&= |W_n| \int_{\mathcal B(0)} 
\EE \left[ 
\mathbf{S}_{K_n}(0,\bX)\mathbf{S}_{K_n}(v,\bX)^\top 
r_{K_n}(0,\bX)r_{K_n}(v,\bX) 
\lambda(0,\bX;\btheta^\star)\lambda(v,\bX;\btheta^\star)
\right] \mathrm dv.
\end{align*}
Now, using Hölder's inequality and again the stationarity of the Gibbs point process
\begin{align*}
\|T_1\| & \le c|W_n| \EE \left[ \|\mathbf S_{K_n}(0,\bX\|^4\right]^{1/2}  
\EE \left[ r_{K_n}(0,\bX)^4\right]^{1/2}	\\
&= O(|W_n| K_n^{(2m+2)/2} (a_{K_n}^4)^{1/2}) = O(|W_n| a_{K_n}^2 K_n^{m+1}). 
\end{align*}
Hence
\[
	\Var \left[ \boldsymbol{\delta}_{K_n}(\bX;\btheta^{\star})\right] = O 
	\left( 
	a_{K_n}^2 K_n |W_n|^2   + a_{K_n}^2 K_n^{m+1} |W_n| 
	\right). 
\]
The control in probability of $\boldsymbol{\delta}_{K_n}(\bX;\btheta^{\star})$ is easily deduced whereby we deduce that
\begin{align*}
	\boldsymbol{e}_{K_n}(\bX;\btheta^{\star}_{K_n})&=O_{\mathrm{P}}\left(K_n^{1/2}|W_n|^{1/2}
	+a_{K_n} K_n^{1/2} |W_n|   + a_{K_n} K_n^{(m+1)/2} |W_n|^{1/2}
	\right)\\
	&=O_{\mathrm{P}}\left( |W_n| \left(
	\left(\frac{K_n}{|W_n|}\right)^{1/2}
	+a_{K_n} K_n^{1/2}    + a_{K_n} K_n^{1/2} \left(\frac{K_n^m}{|W_n|}\right)^{1/2}
	\right)\right)\\
	&=O_{\mathrm{P}}\left(K_n^{(m\vee 1)/2}|W_n|^{1/2}
	+a_{K_n} K_n^{1/2} |W_n|  	\right)
\end{align*}
if $a_{K_n}K_n^{1/2}\to 0$ and $K_n^{m\vee 1}|W_n|^{-1}\to 0$ as $n\to \infty$.
\end{proof}

\section{Proof of Theorem~\ref{THM:CONST}}  \label{sec:theorem1}
\begin{proof}
\begin{enumerate}[(i)]
\item Using first-order Taylor expansion, there exists $t \in (0,1)$ such that
\begin{align*}
\boldsymbol{e}_{K_n} \big( \bX;\hat{\btheta}_{K_n} \big) - \boldsymbol{e}_{K_n} \big(\bX;\btheta_{K_n}^{\star} \big) = \boldsymbol{e}_{K_n}^{(1)} \big( \bX;\widetilde{\btheta}_{K_n} \big) \big( \hat{\btheta}_{K_n} - \btheta_{K_n}^{\star} \big),
\end{align*} 
where $\widetilde{\btheta}_{K_n}  = \btheta_{K_n}^{\star} + \, t \big( \hat{\btheta}_{K_n} - \btheta_{K_n}^{\star} \big)$.
Since $\hat{\btheta}_{K_n}=\underset{\btheta_{K_n} \in \mathcal{S}_m }{\text{argmax}} \, \mathrm{LPL}_n(\bX;\btheta_{K_n})$ and \linebreak $\boldsymbol{e}_{K_n}^{(1)} (\bX;\btheta_{K_n}^{\star})=-\mathbf{A}_{K_n}(\bX;\btheta_{K_n}^{\star})$, we therefore have
\begin{align}
\boldsymbol{e}_{K_n}(\bX;\btheta_{K_n}^{\star})  =  \mathbf{A}_{K_n}(\bX;\widetilde{\btheta}_{K_n}) \big( \hat{\btheta}_{K_n} - \btheta_{K_n}^{\star} \big). \label{eq:defthetaEst}
\end{align}
Rewrite the last equation as $L=R$ where 
\begin{align*}
L = & \, \bt_n^\top \mathbf{A}_{K_n}(\bX;\widetilde{\btheta}_{K_n}) \, \bt_n  \cdot  \| \hat{\btheta}_{K_n} - \btheta_{K_n}^{\star} \|^2 \\
R = & \, (\hat{\btheta}_{K_n} - \btheta_{K_n}^{\star})^\top \boldsymbol{e}_{K_n}(\bX;\btheta_{K_n}^{\star})
\end{align*}
with $\bt_n = ( \hat{\btheta}_{K_n} - \btheta_{K_n}^{\star} ) / \| \hat{\btheta}_{K_n} - \btheta_{K_n}^{\star} \|$. We have that $\bt_n \in \mathcal{S}_m$ and $\| \bt_n \| = 1$. Note that
\begin{align*}
|W_n|^{-1} \bt_n^\top  \mathbf{A}_{K_n}(\bX;\widetilde{\btheta}_{K_n}) \, \bt_n \geq \underset{\substack { \by_{K_n}, \btheta \in \mathcal{S} \\ \| \by_{K_n} \| =1}}{\inf} \, |W_n|^{-1} \by_{K_n}^\top  \mathbf{A}_{K_n}(\bX;\btheta) \by_{K_n}.
\end{align*}
By Lemma~\ref{lemma1}(i)-(ii), for $n$ large enough a.s.
\begin{align*}
\underset{\substack { \by_{K_n}, \btheta \in \mathcal{S}_m \\ \| \by_{K_n} \| =1}}{\inf} \, |W_n|^{-1} \by_{K_n}^\top  \mathbf{A}_{K_n}(\bX;\btheta) \by_{K_n} &\geq \frac12 \,\underset{\substack { \by, \btheta \in \mathcal{S}_m \\ \| \by \| =1}}{\inf} \,  \by^\top  \widebar{\mathbf{A}}(\btheta) \, \by \\
&\ge 
\frac12 \,\underset{\substack { \by \in \mathbb{R}^{\mathbb{N}}, \| \by \| =1 \\ \btheta \in \mathcal{S}_m}}{\inf} \,  \by^\top  \widebar{\mathbf{A}}(\btheta) \, \by 
=: \frac1c > 0
\end{align*}
where the latter inequality ensues from Lemma~\ref{lemma2}(i). Hence, for $n$ large enough $ L \geq c^{-1} \, |W_n| \,  \| \hat{\btheta}_{K_n} - \btheta_{K_n}^{\star} \|^2$ almost surely. On the other hand, we have  \linebreak $| R | \leq  \| \hat{\btheta}_{K_n} - \btheta_{K_n}^{\star} \| \, \| \boldsymbol{e}_{K_n}(\bX;\btheta_{K_n}^{\star}) \|$ whereby we deduce that  
\begin{align*}
\| \hat{\btheta}_{K_n} - \btheta_{K_n}^{\star} \| \leq c \, |W_n|^{-1} \, \| \boldsymbol{e}_{K_n}(\bX;\btheta_{K_n}^{\star}) \|.
\end{align*}
The results are easily deduced from Proposition~\ref{PROP:BIAS}.

(ii) We start first with the following equation
\[
	\mathbf{A}_{K_n}(\btheta_{K_n}^\star)(\hat{\btheta}_{K_n} - \btheta_{K_n}^{\star}) - \boldsymbol{e}_{K_n}(\bX; \btheta^{\star})= \sum_{i=1}^4 T_i
\]
where
\begin{align*}
T_1 &= (\mathbf{A}_{K_n}(\btheta_{K_n}^\star)  - \mathbf{A}_{K_n}(\bX;\btheta_{K_n}^\star)  - ) (\hat{\btheta}_{K_n} - \btheta_{K_n}^{\star}) \\
T_2 &= (\mathbf{A}_{K_n}(\bX;\btheta_{K_n}^\star)  - \mathbf{A}_{K_n}(\bX;\widetilde{\btheta}_{K_n})) (\hat{\btheta}_{K_n} - \btheta_{K_n}^{\star})\\
T_3 &= \mathbf{A}_{K_n}(\bX;\widetilde{\btheta}_{K_n}) (\hat{\btheta}_{K_n} - \btheta_{K_n}^{\star}) - \boldsymbol{e}_{K_n}(\bX; \btheta^{\star}_{K_n})\\
T_4 &=	\boldsymbol{e}_{K_n}(\bX; \btheta^{\star}_{K_n}) - \boldsymbol{e}_{K_n}(\bX; \btheta^{\star}).
\end{align*}
From Lemma~\ref{lemma2}(vi), $\|T_1\|=o_{\mathrm P}(|W_n| \|\hat{\btheta}_{K_n} - \btheta_{K_n}^{\star})\|)= o_{\mathrm P}(a_{K_n} K_n^{1/2}|W_n|+K_n^{(m\vee 1)/2}|W_n|^{1/2} )$. Lemma~\ref{lemma2}(v) and Theorem~\ref{THM:CONST}(i) show that 
\begin{align*}
T_2&=O_{\mathrm{P}}\left( |W_n|K_n^{m+3/2} \left( K_n^{(m\vee 1)/2}{|W_n|}^{-1/2}+a_{K_n}{K_n}^{1/2}\right)^2\right)	\\
&=O_{\mathrm{P}}\left( K_n^{m+m\vee 1+3/2}+   a_{K_n}^2K_n^{m+5/2} |W_n| \right).	
\end{align*}
By definition $T_3=0$ and by Proposition~\ref{PROP:BIAS}, $T_4=\boldsymbol{\delta}_{K_n}(\bX;\btheta^\star)=O_{\mathrm{P}}(a_{K_n}K_n^{1/2}|W_n|)$. Hence,
\[
	\sum_i T_i = O_{\mathrm P}\left( 
K_n^{(m\vee 1)/2} |W_n|^{1/2}+K_n^{m+m\vee 1+3/2}+a_{K_n} K_n^{1/2} |W_n| + a_{K_n}^2K_n^{m+5/2}|W_n|
	\right).
\]
Hence, using Lemma~\ref{lemma2}(ii) we obtain
\begin{align}
\hat{\btheta}_{K_n} - \btheta_{K_n}^{\star} =& \mathbf{A}_{K_n}^{-1}(\btheta_{K_n}^\star)  \boldsymbol{e}_{K_n}(\bX; \btheta^{\star})  + O_\mathrm{P}(x_n) \label{eq:tmp}
\end{align}
with
\begin{align}
\label{eq:xn}
x_n &= 
\frac{K_n^{(m\vee 1)/2}}{|W_n|^{1/2}} + \frac{K_n^{m+m\vee 1+3/2}}{|W_n|} + a_{K_n}K_n^{1/2} + a_{K_n}^2 K_n^{m+5/2} \nonumber \\
&= O \left( \frac{K_n^{m+m\vee 1+3/2}}{|W_n|} + a_{K_n}K_n^{1/2} + a_{K_n}^2 K_n^{m+5/2} \right).
\end{align}
Now, let $r \in (\delta, R+\delta)$. Using~\eqref{eq:tmp} and since $\|\widetilde{\boldsymbol{\varphi}}_{K_n}(r - \delta)\|=\sqrt{K_n}$ , we have
\begin{align*} 
\hat{g}_n(r;K_n) - g(r) = & \sum_{k=1}^{K_n} (\hat{\theta}_{k,n}-\theta_k^{\star}) \varphi_k(r-\delta) - \sum\limits_{k>K_n} \theta_k^{\star} \varphi_k(r-\delta) \\
= &  \, \widetilde{\boldsymbol{\varphi}}_{K_n}(r - \delta)^\top \left( \hat{\btheta}_{K_n} - \btheta_{K_n}^{\star} \right) - \sum\limits_{k>K_n} \theta_k^{\star} \varphi_k(r-\delta) \\
= & \,  \widetilde{\boldsymbol{\varphi}}_{K_n}(r - \delta)^\top \mathbf{A}_{K_n}^{-1}(\btheta_{K_n}^\star)  \boldsymbol{e}_{K_n}(\bX; \btheta^{\star}) + O_\mathrm{P}(\sqrt{K_n} x_n) + O(a_{K_n})
\end{align*}
Let $\bz=\widetilde{\boldsymbol{\varphi}}_{K_n}(r - \delta)^\top  \mathbf{A}_{K_n}^{-1}(\btheta_{K_n}^\star)$. From Lemma~\ref{lemma2}(ii) and Cauchy-Schwarz inequality, we have
\[
\sum_{k=0}^{K_n} k^m \, |z_k| \leq K_n^{m+1/2} \| \bz \| 
\le c \| \widetilde{\boldsymbol{\varphi}}_{K_n}(r - \delta)^\top \| \, \| \mathbf{A}_{K_n}^{-1}(\btheta_{K_n}^\star)\|
= O(K_n^{m+1} |W_n|^{-1}) =o(1)\]
by assumption. Hence, $\bz \in \mathcal{S}_m$ and by Lemma~\ref{lemma2}(iv),
\[
s_n^{-1}(r) \,\widetilde{\boldsymbol{\varphi}}_{K_n}(r - \delta)^\top \mathbf{A}_{K_n}^{-1}(\btheta_{K_n}^\star)  \boldsymbol{e}_{K_n}(\bX; \btheta^{\star}) \xrightarrow{d} \mathcal{N}(0,1)
\]
where $s_n(r)$ is given by (\ref{tauxn}). Now, take $\by_{K_n} =  \mathbf{A}_{K_n}^{-1}(\btheta_{K_n}^\star) \, \widetilde{\boldsymbol{\varphi}}_{K_n}(r - \delta)^\top$. We then deduce from Lemma~\ref{lemma2}(iii) that ${\displaystyle \liminf_{n\to \infty} \;   |W_n|^{-1} \Var \left( \by_{K_n}^\top \boldsymbol{e}_{K_n}(\bX; \btheta^{\star}) \right) >0}$, i.e. \linebreak${ \liminf_{n\to \infty} \;   |W_n|^{-1} s_n^{2}(r) >0}$ which implies that $s_n^{-1}(r) = O(|W_n|^{-1/2})$. Finally
\begin{align*}
s_{n}^{-1}(r) \sqrt{K_n} x_n &= 
 \frac{K_n^{m+m\vee 1+2}}{|W_n|^{3/2}} + a_{K_n} K_n^{1/2} \left( \frac{K_n}{|W_n|}\right)^{1/2} + a_{K_n}^2 \frac{K_n^{m+3}}{|W_n|^{1/2}}.
\end{align*}
The second term tends to 0 by assumptions of Theorem~\ref{THM:CONST}(i). The first and third one tend to 0 by assumption of Theorem~\ref{THM:CONST}(ii). Finally, using Slutsky's Theorem, we conclude that
\[
s_n^{-1}(r) \left \{ \hat{g}_n(r;K_n) - g(r) \right \} \xrightarrow{d} \mathcal{N}(0,1).
\]  
\end{enumerate}
\end{proof}

\section{Version of Theorem~\ref{THM:CONST} when $\sup_{r\in[0,R]} (r+\delta)^{d-1}/w(r)=\infty$}
\label{sec:FB}

As mentioned in the discussion of Condition~\ref{C:anKn}, the point (iii) of this condition is used in Lemma~\ref{lem:Esk} to prove that $\int_0^R \varphi_k(r)^2 \mathrm dr = O(1)$. If $\sup_{r\in[0,R]} (r+\delta)^{d-1}/w(r)=\infty$ (a situation which occurs with the Fourier-Bessel basis and $\delta>0$), then we only have $\int_0^R \varphi_k(r)^2\mathrm d r=O(k^m)$. We claim that in this situation Theorem~\ref{THM:CONST} remains true but with different assumptions. Here are the changes. The proofs are omitted. In Lemma~\ref{lem:Esk}, $\EE(S_k(0,\bX)^p)=O(k^{mp})$ and $\EE(\|\mathbf S_{K_n}(0,\bX)\|^p)=O(k^{mp+p/2})$. Lemmas~\ref{lemma1}-\ref{lemma2} remain unchanged. In Proposition~\ref{PROP:BIAS}, $\mathbf e_{K_n}(\bX;\btheta_{K_n}^\star)= O_{\mathrm{P}}\big( K_n^{m+1/2}|W_n|^{1/2}+a_{K_n}K_n^{m+1/2}|W_n|\big)$. Theorem~\ref{THM:CONST} is reformulated as follows\\
(i) if $K_n^{m+1/2}|W_n|^{-1/2}\to 0$ and $a_{K_n}K_n^{m+1/2} \to 0$ as $n\to \infty$
\begin{align*} 
\mathrm{ISE}(\hat{g}_n(\cdot\,;K_n)) =   O_\mathrm{P} \left( \| {\hat{\btheta}}_{K_n} -\btheta_{K_n}^{\star}\|^2 \right) = O_\mathrm{P} \left( {\frac{K_n^{2m+1}}{|W_n|}} + a_{K_n}^2 K_n^{2m+1} \right) 
\end{align*}
(ii) \eqref{tauxn} is true if $a_{K_n} K_n^{m+1/2}\to 0$, $a_{K_n}^2K_n^{6m+3}|W_n|^{-1/2}\to 0$ and $K_n^{10m/3+2}|W_n|^{-1}\to 0$ as $n\to \infty$.

\section{Pointwise convergence of $\hat g_n(r;K_n)$}

\begin{corollary}
Assume the conditions \ref{C:Wn}-\ref{C:anKn} hold. As $n\to \infty$, assume also that $a_{K_n} K_n\to 0$ and that $K_n^2/|W_n| \to 0$. Then, for $r \in (\delta,R+\delta)$ 
\[ \hat{g}_n(r;K_n) - g(r) 
=O_\mathrm{P} \left( \sqrt{\frac{K_n^{2}}{|W_n|}} +  a_{K_n}K_n \right).
\]
	
\end{corollary}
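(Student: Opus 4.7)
The plan is to reduce the pointwise error to quantities already controlled in Theorem~\ref{THM:CONST}(i) by decomposing
\[
\hat g_n(r;K_n) - g(r) = \widetilde{\boldsymbol{\varphi}}_{K_n}(r - \delta)^\top(\hat{\btheta}_{K_n} - \btheta_{K_n}^{\star}) - \sum_{k>K_n}\theta_k^{\star} \varphi_k(r-\delta)
\]
and treating the two terms separately.

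First, for the deterministic truncation tail, I would apply Condition~\ref{C:anKn}(ii), which gives $\|\varphi_k\|_\infty = O(k^m)$, to bound
\[
\Bigl|\sum_{k>K_n}\theta_k^{\star}\varphi_k(r-\delta)\Bigr| \le c\sum_{k>K_n}k^m|\theta_k^{\star}| \le c\,a_{K_n},
\]
which is absorbed into the target rate $O(a_{K_n}K_n)$ since $K_n\ge 1$. For the stochastic term, Cauchy--Schwarz yields
\[
\Bigl|\widetilde{\boldsymbol{\varphi}}_{K_n}(r - \delta)^\top(\hat{\btheta}_{K_n} - \btheta_{K_n}^{\star})\Bigr| \le \|\widetilde{\boldsymbol{\varphi}}_{K_n}(r - \delta)\|\cdot\|\hat{\btheta}_{K_n} - \btheta_{K_n}^{\star}\|,
\]
and I would bound $\|\widetilde{\boldsymbol{\varphi}}_{K_n}(r - \delta)\|^2 = \sum_{k=1}^{K_n}\varphi_k(r-\delta)^2 = O(K_n)$, which holds for bases whose pointwise values stay bounded in $k$ (in particular, the $m=0$ case corresponding to the cosine basis).

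Next, I would check that the corollary's hypotheses $a_{K_n}K_n\to 0$ and $K_n^2/|W_n|\to 0$ imply those of Theorem~\ref{THM:CONST}(i), namely $a_{K_n}K_n^{1/2}\to 0$ (immediate) and $K_n^{m\vee 1}/|W_n|\to 0$ (valid whenever $m\vee 1\le 2$). Applying that theorem then gives, for $m\vee 1 = 1$,
\[
\|\hat{\btheta}_{K_n} - \btheta_{K_n}^{\star}\| = O_{\mathrm P}\!\left(\frac{K_n^{1/2}}{|W_n|^{1/2}} + a_{K_n}K_n^{1/2}\right).
\]
Multiplying the two factors produces $O_{\mathrm P}(K_n/|W_n|^{1/2} + a_{K_n}K_n) = O_{\mathrm P}(\sqrt{K_n^2/|W_n|} + a_{K_n}K_n)$, and combining with the bias contribution of order $a_{K_n}$ yields exactly the claimed rate.

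The main point requiring care is the control of $\|\widetilde{\boldsymbol{\varphi}}_{K_n}(r - \delta)\|$: relying only on Condition~\ref{C:anKn}(ii), the best deterministic estimate one can write is $\|\widetilde{\boldsymbol{\varphi}}_{K_n}(r - \delta)\| = O(K_n^{m+1/2})$, which introduces an extra $K_n^m$ factor for $m>0$ bases such as Fourier--Bessel in $d\ge 2$. The clean rate stated in the corollary is therefore in the spirit of the cosine/Haar settings with pointwise bounded evaluations, with the Fourier--Bessel variant obtained by an adjustment paralleling the one already carried out in Section~\ref{sec:FB}.
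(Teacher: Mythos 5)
Your proposal is correct and follows essentially the same route as the paper's proof: split $\hat g_n(r;K_n)-g(r)$ into the estimation error on the first $K_n$ coefficients plus the truncation tail, bound the tail by $a_{K_n}$, apply Cauchy--Schwarz with $\|\widetilde{\boldsymbol{\varphi}}_{K_n}(r-\delta)\|=O(\sqrt{K_n})$, and invoke Theorem~\ref{THM:CONST}(i). Your closing caveat about the norm bound is a fair observation (the paper asserts $O(\sqrt{K_n})$ with only a loose pointer to Lemma~\ref{lem:Esk}, and Condition~\ref{C:anKn}(ii) alone would give $O(K_n^{m+1/2})$), but it does not change the argument for the bases and the regime the paper works in.
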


\begin{proof}
For $r \in (\delta,R+\delta)$, we have
\begin{align*}
 | \hat{g}_n(r;K_n) - g(r)  | \leq \left | \sum_{k=1}^{K_n} (\hat{\theta}_{k,n} - \theta_{k}^{\star}) \varphi_k(r-\delta) \right | + \left | \sum\limits_{k>K_n} \theta_{k}^{\star} \varphi_k(r-\delta) \right |
\end{align*}
which implies by Cauchy-Schwarz inequality and since $\|\hat{\btheta}_{-0,K_n} - \btheta_{-0K_n}^{\star}\| \le \|\hat{\btheta}_{K_n} - \btheta_{K_n}^{\star}\|$ that
\[
 | \hat{g}_n(r;K_n) - g(r)  | \leq  \| \hat{\btheta}_{K_n} - \btheta_{K_n}^{\star} \| \times \| \widetilde{\boldsymbol{\varphi}}_{K_n}(r - \delta) \| + a_{K_n}.
\]
The result is deduced from Theorem~\ref{THM:CONST}(i) and Lemma~\ref{lem:Esk} which ensures that $\| \widetilde{\boldsymbol{\varphi}}_{K_n}(r - \delta) \|=O(\sqrt{K_n})$.
\end{proof}

\section{A multivariate extension of Theorem~\ref{THM:CONST}$\mathrm{(ii)}$}  \label{sec:theorem3}

The next result is a multivariate extension of Theorem~\ref{THM:CONST}(ii). It may be used for instance to build a statistical test of a specific parametric form for the pairwise interaction function. We did not investigate this in this paper. 

 \begin{proposition}
\label{THM:MULTASYMPT}
Let $\hat{\bg}_n(\br; K_n)=(\hat{g}_n(r_i;K_n))_{i=1,\dots,q}$ and $\bg(\br)  =  (g(r_i))_{i=1,\dots,q}$ where $\br=(r_1,\dots,r_q)^\top \in \mathbb{R}^q$ and $r_i \in (\delta,R+\delta)$, $1 \leq i \leq q$. Suppose that the assumptions of Theorem~\ref{THM:CONST}(ii) are satisfied. Then,
\begin{align*}
\boldsymbol{\Sigma}_{K_n}^{-1/2}(\br) \Big \{ \hat{\bg}_n(\br; K_n) - \bg(\br) \Big \} \xrightarrow{d} \mathcal{N}(\mathrm{0},\mathbf{I}_q)
\end{align*}
\end{proposition}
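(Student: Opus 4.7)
My plan is to reduce the multivariate convergence to the univariate one via the Cramér--Wold device: it is enough to check that for every fixed $\bc\in\mathbb R^q$ with $\bc^\top\bc>0$, the scalar sequence $\bc^\top\boldsymbol{\Sigma}_{K_n}^{-1/2}(\br)\{\hat{\bg}_n(\br;K_n)-\bg(\br)\}$ converges weakly to $\mathcal N(0,\bc^\top\bc)$. The idea is to reuse the linearization of $\hat g_n(r;K_n)-g(r)$ that was established inside the proof of Theorem~\ref{THM:CONST}(ii). For each $i=1,\dots,q$, that proof gives
\begin{equation*}
\hat g_n(r_i;K_n)-g(r_i)
=\widetilde{\boldsymbol{\varphi}}_{K_n}(r_i-\delta)^\top\mathbf A_{K_n}^{-1}(\btheta_{K_n}^\star)\,\boldsymbol e_{K_n}(\bX;\btheta^\star)+R_{n,i},
\end{equation*}
where $R_{n,i}=O_{\mathrm P}(\sqrt{K_n}\,x_n+a_{K_n})$ with $x_n$ as in \eqref{eq:xn}. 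Since $q$ is fixed, summing these identities with the weights $c_i$ yields
\begin{equation*}
\bc^\top\{\hat{\bg}_n(\br;K_n)-\bg(\br)\}=\by_{K_n}^\top\boldsymbol e_{K_n}(\bX;\btheta^\star)+\sum_{i=1}^q c_iR_{n,i},
\qquad
\by_{K_n}:=\sum_{i=1}^q c_i\,\mathbf A_{K_n}^{-1}(\btheta_{K_n}^\star)\widetilde{\boldsymbol{\varphi}}_{K_n}(r_i-\delta).
\end{equation*}

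The next step is to invoke Lemma~\ref{lemma2}(iv) on the linear functional $\by_{K_n}^\top\boldsymbol e_{K_n}(\bX;\btheta^\star)$. For this I first need to check that $\by_{K_n}$, extended by zeros, lies in $\mathcal S_m$. Exactly as in the univariate case, the triangle inequality, Cauchy--Schwarz, and Lemma~\ref{lemma2}(ii) yield
\begin{equation*}
\|\by_{K_n}\|_m\le \sum_{i=1}^q |c_i|\,K_n^{m+1/2}\bigl\|\mathbf A_{K_n}^{-1}(\btheta_{K_n}^\star)\widetilde{\boldsymbol{\varphi}}_{K_n}(r_i-\delta)\bigr\|=O\bigl(K_n^{m+1}/|W_n|\bigr)=o(1)
\end{equation*}
under the assumptions of Theorem~\ref{THM:CONST}(ii). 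A direct bilinear expansion together with $\Var[\boldsymbol e_{K_n}(\bX;\btheta^\star)]=\mathbf A_{K_n}(\btheta^\star)+\mathbf B_{K_n}(\btheta^\star)$ identifies
\begin{equation*}
\Var\bigl[\by_{K_n}^\top\boldsymbol e_{K_n}(\bX;\btheta^\star)\bigr]=\by_{K_n}^\top\{\mathbf A_{K_n}(\btheta^\star)+\mathbf B_{K_n}(\btheta^\star)\}\by_{K_n}=\bc^\top\boldsymbol{\Sigma}_{K_n}(\br)\bc,
\end{equation*}
since the $(i,j)$-entry of $\boldsymbol{\Sigma}_{K_n}(\br)$ equals $\widetilde{\boldsymbol{\varphi}}_{K_n}(r_i-\delta)^\top\boldsymbol{\Pi}_{K_n}\widetilde{\boldsymbol{\varphi}}_{K_n}(r_j-\delta)$. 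Lemma~\ref{lemma2}(iv) then delivers the CLT for the leading term, with variance $\bc^\top\boldsymbol{\Sigma}_{K_n}(\br)\bc$.

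The main obstacle, as in the univariate case, is controlling the remainder relative to this normalization. By combining Lemma~\ref{lemma1} with Lemma~\ref{lemma2}(iii) applied to the unit vector $\by_{K_n}/\|\by_{K_n}\|$ (which still lies in $\mathcal S_m$), one gets $\liminf_{n\to\infty}|W_n|^{-1}\bc^\top\boldsymbol{\Sigma}_{K_n}(\br)\bc>0$, hence $(\bc^\top\boldsymbol{\Sigma}_{K_n}(\br)\bc)^{-1/2}=O(|W_n|^{-1/2})$. The conditions $K_n^{2(m+m\vee 1+2)/3}/|W_n|\to 0$, $a_{K_n}K_n^{1/2}\to 0$ and $a_{K_n}^2K_n^{m+3}/|W_n|^{1/2}\to 0$ imply exactly that $|W_n|^{-1/2}(\sqrt{K_n}\,x_n+a_{K_n})\to 0$, so the remainder $\sum_ic_iR_{n,i}$, once normalized, is $o_{\mathrm P}(1)$. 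Slutsky's theorem then yields the Cramér--Wold limit for every $\bc$, and therefore the announced multivariate convergence.
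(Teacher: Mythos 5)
Your argument tracks the paper's own proof almost line by line: the same linearization of each $\hat g_n(r_i;K_n)-g(r_i)$ inherited from Theorem~\ref{THM:CONST}(ii), the same verification that the weight vector lies in $\mathcal S_m$ via Lemma~\ref{lemma2}(ii), the same identification $\Var[\by_{K_n}^\top\boldsymbol e_{K_n}(\bX;\btheta^\star)]=\bc^\top\boldsymbol{\Sigma}_{K_n}(\br)\bc$, the CLT from Lemma~\ref{lemma2}(iv), and the same remainder control using $\liminf_n|W_n|^{-1}\bc^\top\boldsymbol{\Sigma}_{K_n}(\br)\bc>0$. All of that is correct.

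There is, however, a genuine gap in the last sentence. What you actually establish is that for every \emph{fixed} $\bc$, $(\bc^\top\boldsymbol{\Sigma}_{K_n}(\br)\bc)^{-1/2}\,\bc^\top\{\hat{\bg}_n(\br;K_n)-\bg(\br)\}\xrightarrow{d}\mathcal N(0,1)$. The Cram\'er--Wold reduction of the stated conclusion requires instead that $\bc^\top\boldsymbol{\Sigma}_{K_n}^{-1/2}(\br)\{\hat{\bg}_n(\br;K_n)-\bg(\br)\}\xrightarrow{d}\mathcal N(0,\bc^\top\bc)$, i.e.\ convergence of the linear combination along the $n$-\emph{dependent} direction $\boldsymbol{\Sigma}_{K_n}^{-1/2}(\br)\bc$; these two statements coincide only when $\boldsymbol{\Sigma}_{K_n}(\br)$ is (asymptotically) a multiple of the identity. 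Since $\boldsymbol{\Sigma}_{K_n}(\br)$ grows like $|W_n|$ and $|W_n|^{-1}\boldsymbol{\Sigma}_{K_n}(\br)$ is not assumed to converge, the classical Cram\'er--Wold device does not apply directly, and "Slutsky then yields the Cram\'er--Wold limit" conflates the two normalizations. This is precisely the step the paper delegates to Lemma~2.1 of \cite{biscio2018note} (a note devoted to exactly this gap in multivariate CLT proofs); the standard repair is a subsequence argument: $|W_n|^{-1}\boldsymbol{\Sigma}_{K_n}(\br)$ is bounded with smallest eigenvalue bounded away from zero, so along any subsequence it has a further subsequence converging to some positive definite $\boldsymbol{\Sigma}$, along which your fixed-$\bc$ result upgrades to $|W_n|^{-1/2}\{\hat{\bg}_n(\br;K_n)-\bg(\br)\}\xrightarrow{d}\mathcal N(0,\boldsymbol{\Sigma})$ and hence to the matrix-normalized limit $\mathcal N(0,\mathbf I_q)$; as this limit is the same for every subsequence, the full sequence converges. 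Either cite the lemma or include this argument; everything before that point stands.
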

where
\begin{align}
\boldsymbol{\Sigma}_{K_n}(\br) = & \widetilde{\boldsymbol{\varphi}}_{K_n}^\top(\br - \delta) \,  \boldsymbol \Pi_{K_n} \, \widetilde{\boldsymbol{\varphi}}_{K_n}(\br - \delta), \nonumber \\
\widetilde{\boldsymbol{\varphi}}_{K_n}(\br - \delta) = & \left ( \widetilde{\boldsymbol{\varphi}}_{K_n}(r_i - \delta) \right)_{1 \leq i \leq q}.\label{sigma:multivariate}
\end{align} 
Note that the $(K_n+1,q)$ matrix $\widetilde{\boldsymbol{\varphi}}_{K_n}(\br - \delta)$ can be equivalently written in the following form
\[
  \widetilde{\boldsymbol{\varphi}}_{K_n}(\br - \delta) =
  \left[ {\begin{array}{cccc}
    0 & 0 & \cdots & 0\\
    \varphi_1(r_1-\delta) & \varphi_1(r_2-\delta) & \cdots & \varphi_1(r_q-\delta)\\
    \vdots & \vdots &  & \vdots\\
    \varphi_{K_n}(r_1-\delta) & \varphi_{K_n}(r_2-\delta) & \cdots & \varphi_{K_n}(r_q-\delta)\\
  \end{array} } \right].
\]

\begin{proof}
Let $\bt \in \mathbb{R}^d$. We have
\begin{align*}
\bt^\top \Big \{ \hat{\bg}_n(\br; K_n) - \bg(\br) \Big \} = & \, \sum_{i=1}^q t_i \, \Big \{ \hat{g}_n(r_i;K_n) - g(r_i) \Big \} \\
= & \, \bt^\top \widetilde{\boldsymbol{\varphi}}_{K_n}(\br - \delta)^\top \mathbf{A}_{K_n}^{-1}(\btheta_{K_n}^\star)  \boldsymbol{e}_{K_n}(\bX; \btheta^{\star}) + O_{\mathrm P}(x_n)
\end{align*}
where $x_n$ is given by~\eqref{eq:xn}. We may prove similarly to the proof of Theorem~\ref{THM:CONST}(ii) that $\bz =  \bt^\top \widetilde{\boldsymbol{\varphi}}_{K_n}(\br - \delta)^\top \mathbf{A}_{K_n}^{-1}(\btheta_{K_n}^\star) \in \mathcal S_m$.
 Hence, by Lemma~\ref{lemma2}(iv)
\begin{align*}
\sigma_\bt^{-1}  \, \bt^\top \widetilde{\boldsymbol{\varphi}}_{K_n}(\br - \delta)^\top \mathbf{A}_{K_n}^{-1}(\btheta_{K_n}^\star)  \boldsymbol{e}_{K_n}(\bX; \btheta^{\star}) \xrightarrow{d} \mathcal{N}(0,1)
\end{align*} 
where
$\sigma_\bt  =  \Big ( \bt^\top\boldsymbol{\Sigma}_{K_n}(\br) \, \bt \Big )^{1/2}$ (note that $\boldsymbol{\Sigma}_{K_n}(\br)$ is given by (\ref{sigma:multivariate})). Using Lemma~\ref{lemma2}(iii), we have ${\displaystyle \liminf_{n\to \infty} \;   |W_n|^{-1} \sigma_\bt^2 >0}$ whereby we deduce that $\sigma_\bt^{-1} = O(|W_n|^{-1/2})$. With similar arguments used in the proof of Theorem~\ref{THM:CONST}(ii), we have that $\sigma_t K_n^{1/2} x_n=o(1)$. Hence, by Slutsky's Theorem
\begin{align*}
\sigma_\bt^{-1}  \, \bt^\top \Big \{ \hat{\bg}_n(\br; K_n) - \bg(\br) \Big \}  \xrightarrow{d} \mathcal{N}(0,1).
\end{align*} 
Finally, using Lemma 2.1 of \cite{biscio2018note}, we deduce that 
\[
\boldsymbol{\Sigma}_{K_n}^{-1/2}(\br) \Big \{ \hat{\bg}_n(\br; K_n) - \bg(\br) \Big \} \xrightarrow{d} \mathcal{N}(\mathrm{0},\mathbf{I}_d).
\]
\end{proof}

\section{Additional Figures}

\begin{figure}[H]
\begin{center}
\renewcommand{\arraystretch}{0}
\includegraphics[width=1\textwidth]{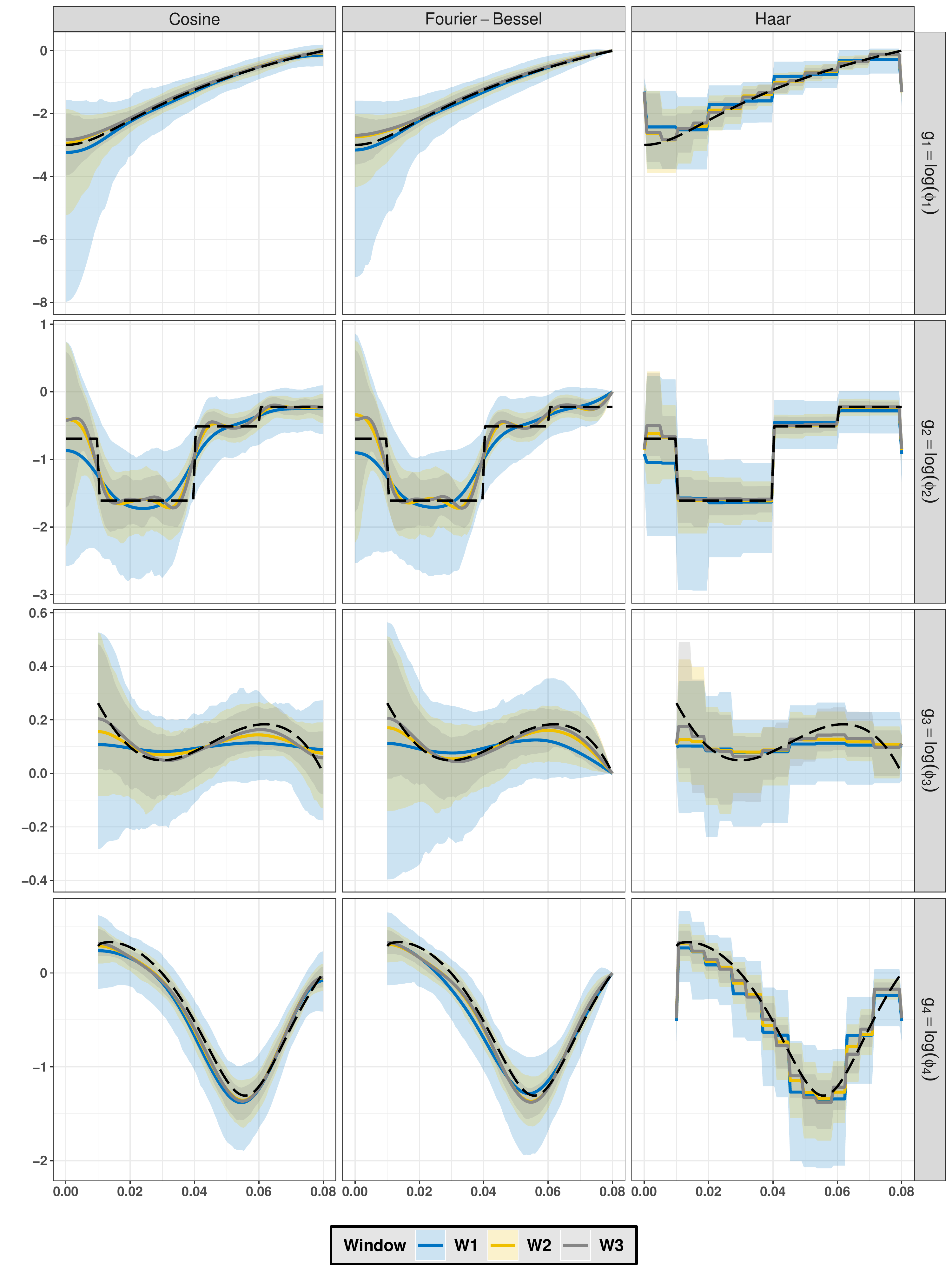} 
\caption{For $l=1,\ldots,4$,  theoretical logarithm of pairwise interaction functions $g_l=\log \phi_l$ (dashed black curves), Monte Carlo means of $\hat{g}_l(\cdot,\hat K_{\mathrm{cAIC}}^{\star})$ in $W_j$ ($j=1,2,3$). Envelopes correspond to 95\% Monte-Carlo pointwise confidence intervals.}
\label{fig:envelopes.cAIC.gs}
\end{center}
\end{figure}

\begin{figure}[H]
\begin{center}
\renewcommand{\arraystretch}{0}
\includegraphics[width=1\textwidth]{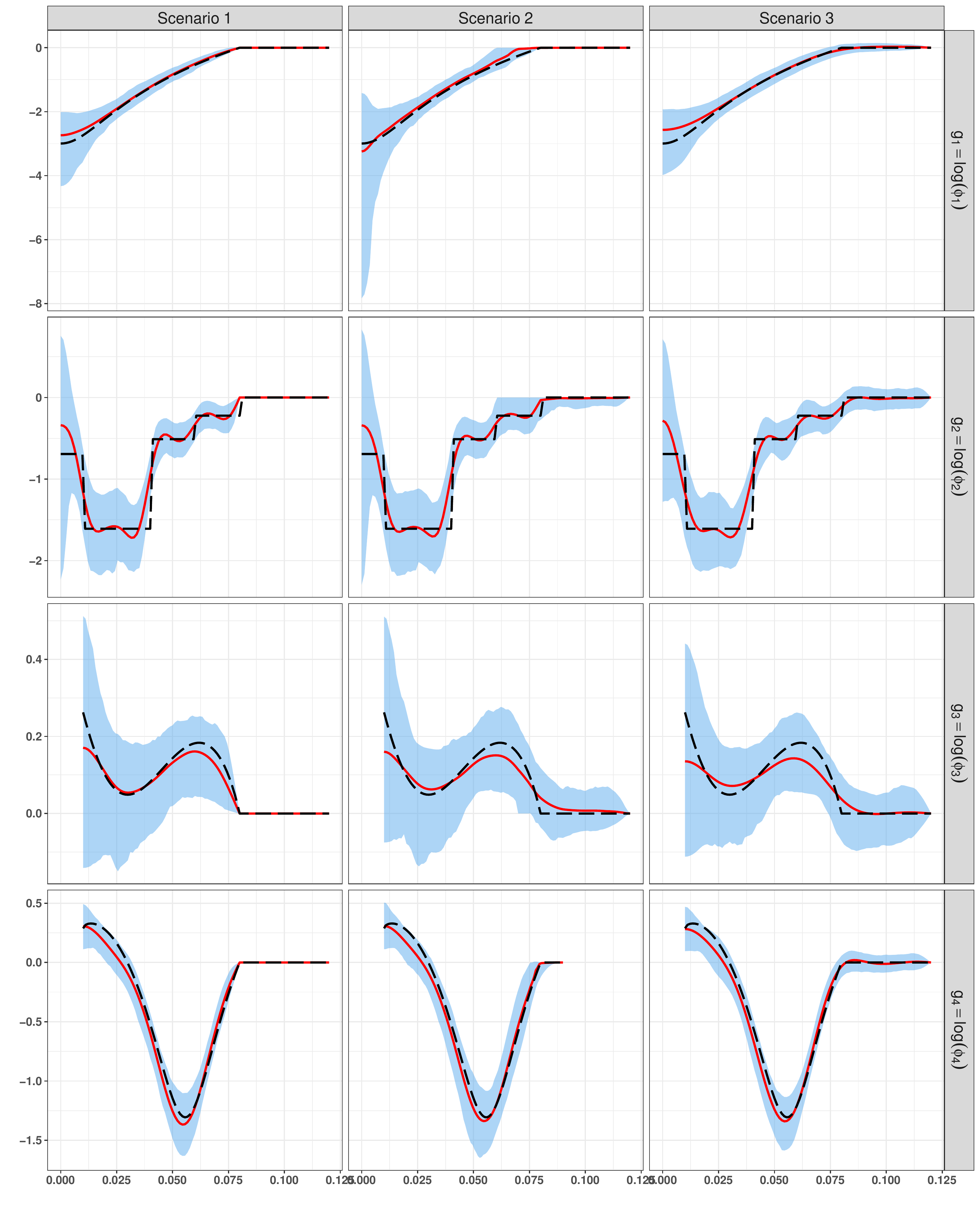} 
\caption{For $l=1,\ldots,4$, theoretical logarithm of pairwise interaction functions $g_l=\log \phi_l$ (dashed black curves), Monte Carlo means of $\hat{g}_l(\cdot,\hat K_{\mathrm{cAIC}}^{\star})$ (red curves) using the Fourier-Bessel basis in $W_2$ when $\delta$ and $R$ are known (first column), estimated (second column), or when $R+\delta$ is set to $R_{\max}=0.12$ (third column). Envelopes correspond to 95\% Monte-Carlo pointwise confidence intervals.}
\label{fig:scenario.gs}
\end{center}
\end{figure}

\begin{figure}[H]
\begin{center}
\renewcommand{\arraystretch}{0}
\includegraphics[width=1\textwidth]{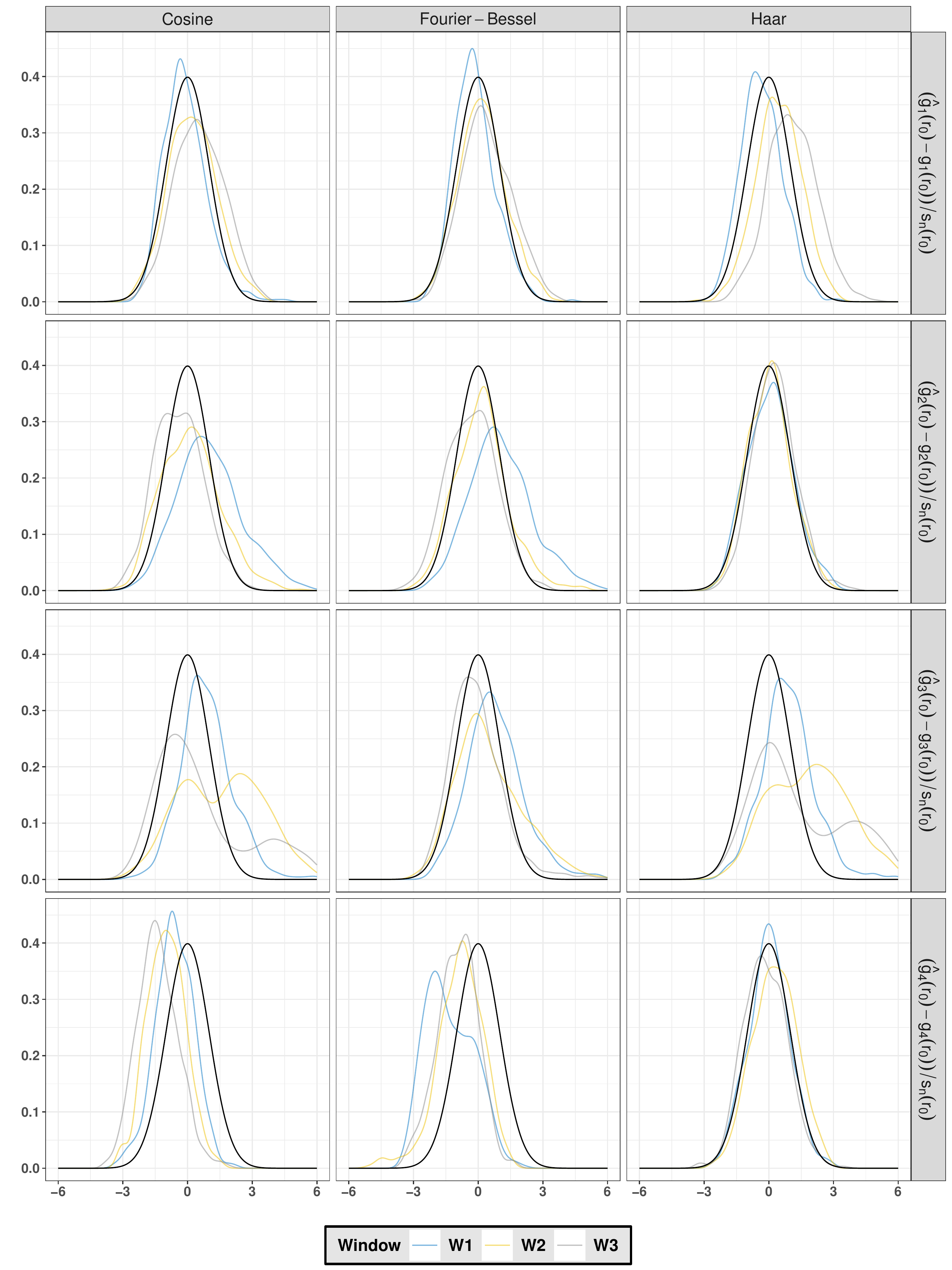} 
\caption{For $l=1,\ldots,4$, curves of $s_n^{-1}(r_0) \left \{ \hat{g}_l(r_0;\hat K_{\mathrm{cAIC}}^{\star}) - g_l(r_0) \right \}$ at $r_0=0.035$ in $W_j$ ($j=1,2,3$). The black curve is the standard normal distribution.}
\label{fig:proof.theorem21}
\end{center}
\end{figure}

\begin{figure}[H]
\begin{center}
\renewcommand{\arraystretch}{0}
\includegraphics[width=1\textwidth]{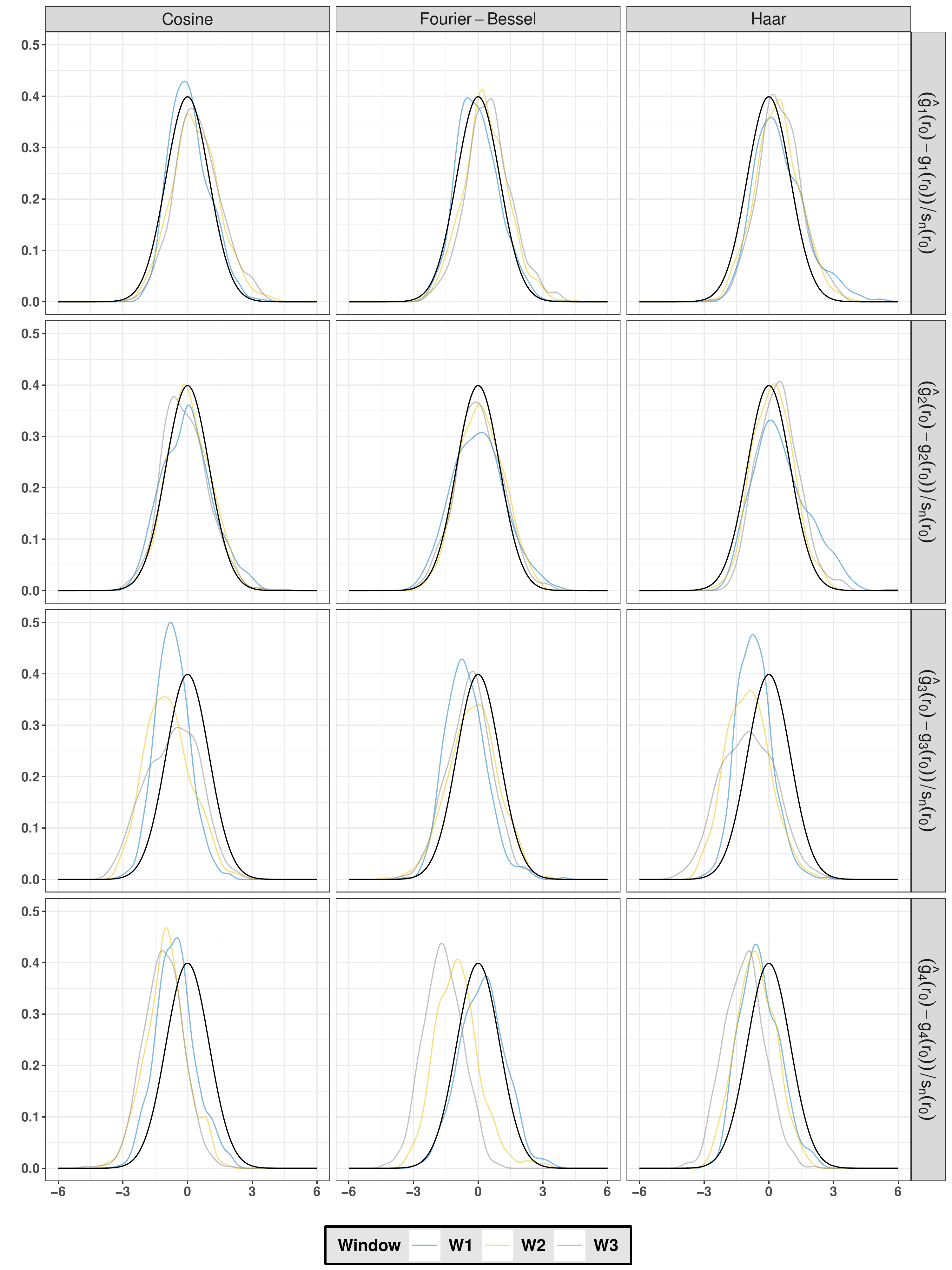} 
\caption{For $l=1,\ldots,4$, curves of $s_n^{-1}(r_0) \left \{ \hat{g}_l(r_0;\hat K_{\mathrm{cAIC}}^{\star}) - g_l(r_0) \right \}$ at $r_0=0.05$ in $W_j$ ($j=1,2,3$). The black curve is the standard normal distribution.}
\label{fig:proof.theorem22}
\end{center}
\end{figure}

\end{document}